\def\namedlabel#1#2{\begingroup
    #2%
    \def\@currentlabel{#2}%
    \phantomsection\label{#1}\endgroup
}
\def\Xint#1{\mathchoice
   {\XXint\displaystyle\textstyle{#1}}%
   {\XXint\textstyle\scriptstyle{#1}}%
   {\XXint\scriptstyle\scriptscriptstyle{#1}}%
   {\XXint\scriptscriptstyle\scriptscriptstyle{#1}}%
   \!\int}
\def\XXint#1#2#3{{\setbox0=\hbox{$#1{#2#3}{\int}$}
     \vcenter{\hbox{$#2#3$}}\kern-.5\wd0}}
\newtheorem{theorem}{Theorem}[section]
\newtheorem*{theorem*}{Theorem}
\newtheorem{lemma}[theorem]{Lemma}
\newtheorem*{lemma*}{Lemma}
\newtheorem{proposition}[theorem]{Proposition}
\theoremstyle{definition}
\theoremstyle{remark}
\newtheorem{question*}[theorem]{Question}
\numberwithin{equation}{section}
\theoremstyle{theorem}
\newtheorem{ltheorem}{Theorem}
\newcommand{\supp}{ \operatorname{supp}}
\newcommand{\dist}{ \operatorname{dist}}
\newcommand{\diam}{ \operatorname{diam}}
\newcommand{\vv}{\vspace{2mm}}
\newcommand{\om}{{\Omega}}
\newcommand{\vphi}{{\varphi}}
\def\loc{\textrm{loc}}
\newcommand{\R}{{\mathbb R}}       % Field of real numbers
\newcommand{\DD}{{\mathcal D}}
\newcommand{\HH}{{\mathcal H}}
\newcommand{\LL}{{\mathcal L}}
\newcommand{\AZ}{{\mathcal A}}
\newcommand{\RR}{{\mathcal R}}
\newcommand{\EE}{{\mathcal E}}
\newcommand{\ve}{{\varepsilon}}
\newcommand{\wt}[1]{{\widetilde{#1}}}
\def\Xint#1{\mathchoice		%average integral
{\XXint\displaystyle\textstyle{#1}}%
{\XXint\textstyle\scriptstyle{#1}}%
{\XXint\scriptstyle\scriptscriptstyle{#1}}%
{\XXint\scriptscriptstyle\scriptscriptstyle{#1}}%
\!\int}
\def\XXint#1#2#3{{\setbox0=\hbox{$#1{#2#3}{\int}$ }
\vcenter{\hbox{$#2#3$ }}\kern-.58\wd0}}
\def\avint{\;\Xint-}
\def\@tocline#1#2#3#4#5#6#7{\relax
  \ifnum #1>\c@tocdepth % then omit
  \else
    \par \addpenalty\@secpenalty\addvspace{#2}%
    \begingroup \hyphenpenalty\@M
    \@ifempty{#4}{%
      \@tempdima\csname r@tocindent\number#1\endcsname\relax
    }{%
      \@tempdima#4\relax
    }%
    \parindent\z@ \leftskip#3\relax \advance\leftskip\@tempdima\relax
    \rightskip\@pnumwidth plus4em \parfillskip-\@pnumwidth
    #5\leavevmode\hskip-\@tempdima
      \ifcase #1
       \or\or \hskip 1em \or \hskip 2em \else \hskip 3em \fi%
      #6\nobreak\relax
    \dotfill\hbox to\@pnumwidth{\@tocpagenum{#7}}\par
    \nobreak
    \endgroup
  \fi}
\title[Gradients of single layer potentials]
{Failure of $L^2$ boundedness of gradients of single layer potentials for measures with zero low density}
\begin{document}

\begin{abstract}
Consider a totally irregular measure $\mu$ in $\R^{n+1}$, that is, the upper density $\limsup_{r\to0}\frac{\mu(B(x,r))}{(2r)^n}$  is positive $\mu$-a.e.\ in $\R^{n+1}$, and the lower density $\liminf_{r\to0}\frac{\mu(B(x,r))}{(2r)^n}$ vanishes $\mu$-a.e. in $\R^{n+1}$. We show that if $T_\mu f(x)=\int K(x,y)\,f(y)\,d\mu(y)$
is an operator whose kernel $K(\cdot,\cdot)$ is the gradient of the fundamental solution for a uniformly elliptic operator in divergence form associated with a matrix with H\"older continuous coefficients, then $T_\mu$ is not bounded in $L^2(\mu)$.
This extends a celebrated result proved previously by Eiderman, Nazarov and Volberg for the $n$-dimensional Riesz transform.
\end{abstract}

\author[Conde-Alonso]{Jos\'e M. Conde-Alonso}

\address{Jos\'e M. Conde-Alonso\\
Department of Mathematics, Brown University, 151 Kassar House, Providence, RI, USA.
}
\email{jconde@math.brown.edu}

\author[Mourgoglou]{Mihalis Mourgoglou}

\address{Mihalis Mourgoglou\\
Departamento de Matem\`aticas, Universidad del Pa\' is Vasco, Aptdo. 644, 48080 Bilbao, Spain and\\
Ikerbasque, Basque Foundation for Science, Bilbao, Spain.
}
\email{michail.mourgoglou@ehu.eus}

\author[Tolsa]{Xavier Tolsa}
\address{Xavier Tolsa
\\
ICREA, Passeig Llu\'{\i}s Companys 23 08010 Barcelona, Catalonia, and\\
Departament de Matem\`atiques and BGSMath
\\
Universitat Aut\`onoma de Barcelona
\\
08193 Bellaterra (Barcelona), Catalonia.
}
\email{xtolsa@mat.uab.cat}

%\subjclass[2010]{31B15, 28A75, 28A78, 35J15, 35J08, 42B37}
\thanks{J.C. was partially supported by the ERC grant 320501 of the European Research Council (FP7/2007-2013). M.M. was supported  by IKERBASQUE and partially supported by the grant IT-641-13 (Basque Government). X.T. was supported by the ERC grant 320501 of the European Research Council and partially supported by MTM-2016-77635-P,  MDM-2014-044 (MICINN, Spain), by 2017-SGR-395 (Catalonia), and by Marie Curie ITN MAnET (FP7-607647).
}

\maketitle

%%%%%%%%%%%%%%%%%%%%%%%%%%%%%%%%%%%%%%%%%%%%%%%%% INTRODUCTION %%%%%%%%%%%%%%%%%%%%%%%%%%%%%%%%%%%%%%%%%%%%%%%%%%%%%%%%%%%%%
% \tableofcontents

\tableofcontents

\allowdisplaybreaks

\section{Introduction}
The goal of this work is to extend some of the results for the Riesz transforms by Eiderman, Nazarov, and Volberg \cite{ENV} to other  integral operators associated with elliptic operators in divergence form
with H\"older continuous coefficients. In \cite{ENV} the authors show that, given $n\leq s<n+1$, if $\mu$ is a non-zero Borel measure in $\R^{n+1}$ such that the upper $s$-dimensional density
$$\Theta^{s,*}(x,\mu) = \limsup_{r\to0}\frac{\mu(B(x,r))}{(2r)^s}$$
is positive $\mu$-a.e.\ and the lower $s$-dimensional density
$$\Theta^{s}_*(x,\mu) = \liminf_{r\to0}\frac{\mu(B(x,r))}{(2r)^s}$$
vanishes $\mu$-a.e., then the $s$-dimensional Riesz transform $\RR_\mu^s$ cannot be bounded in $L^2(\mu)$. Recall that the $s$-dimensional Riesz transform $\RR_\mu^s$ is defined by
$$\RR_\mu^s f(x) = \int \frac{x-y}{|x-y|^{s+1}}\,f(y)\,d\mu(y),$$
whenever the integral makes sense.

In particular, in the case $s=n$, the results in \cite{ENV} imply that if $E\subset \R^{n+1}$ is a set with positive and finite Hausdorff measure $\HH^n$  
such that $\Theta^{n}_*(x,\HH^n|_E) =0$ for $\HH^n$-a.e.\ $x \in E$,
then $\RR_{\HH^n|_E}^n$ cannot be bounded on $L^2(\HH^n|_E)$.
Let us remark that the fact that $\Theta^{n}_*(x,\HH^n|_E) =0$ for $\HH^n$-a.e.\ $x\in E$ implies that $E$ is purely $n$-unrectifiable. Recall that a set $F\subset\R^{n+1}$ is called $n$-rectifiable if there are countably many
Lipschitz manifolds $\Gamma_1,\Gamma_2,\ldots$ such that
$$\HH^n\Bigl(F\setminus \bigcup_i \Gamma_i\Bigr)=0.$$
On the other hand, $F$ is called purely $n$-unrectifiable if it intersects any $n$-rectifiable set at most in a set of zero $\HH^n$ measure.

A measure $\mu$ is called $n$-AD-regular if there exists $C \geq 1$ such that
$$C^{-1} r^n \leq \mu(E\cap B(x,r))\leq C r^n\quad \mbox{ for all $x\in \supp \mu$, $0<r<\diam(\supp \mu)$}.$$
In particular, a set $E\subset\R^{n+1}$ is $n$-AD-regular if $\mu= \HH^n|_E$ is $n$-AD regular.
For this type of sets, Nazarov, Tolsa and Volberg \cite{NToV} showed that
 the $L^2(\HH^n|_E)$ boundedness of $\RR_{\HH^n|_E}^n$ is equivalent to the uniform $n$-rectifiability of $E$ (see the next section for the precise definition of uniform $n$-rectifiability).
 This is the so called David-Semmes problem,
which is still open for the $k$-dimensional Riesz transform when $k$ is different from $1$ or $n$ in $\R^{n+1}$. By combining the solution of the David-Semmes problem for the $n$-AD-regular case in \cite{NToV} with the aforementioned result of Eiderman, Nazarov, and Volberg, it was shown in \cite{NToV-pubmat} that $\RR_{\HH^n|_E}^n$ is not bounded in $L^2(\HH^n|_E)$ whenever $E$ is purely $n$-unrectifiable.

%The full statement that $\RR_{\HH^n|_E}$ is not bounded in $L^2(\HH^n|_E)$ whenever $E$ is purely $n$-unrectifiable is due to Nazarov, Tolsa and Volberg \cite{nazarov-tolsa-volberg}, and it was proved by combining
%the case when $E$ is $n$-AD-regular
%studied in \cite{nazarov-tolsa-volberg-acta} with the previous result of Eiderman, Nazarov, and Volberg.

The above results about the connection between Riesz transforms and rectifiability have been very fruitful
for the study of the geometric properties of harmonic measure in the recent works
\cite{ahm3tv}, \cite{MoTo}, \cite{amt-cpam}, \cite{amtv-ajm}, \cite{gmt-duke}, and \cite{amt-quantcpam}. It is natural then to try to extend these theorems to the case of elliptic measure associated with elliptic operators with H\"older continuous coefficients, which is one of our motivations for the present work.  

More precisely, 
let $A=(a_{ij})_{1\leq, i,j \leq n+1}$ be an $(n+1)\times (n+1)$ matrix whose entries $a_{ij}:\R^{n+1} \to \R$  are measurable functions in $L^\infty(\R^{n+1})$. Assume also that there exists $\Lambda>0$ such that
\begin{align}\label{eqelliptic1}
&\Lambda^{-1}|\xi|^2\leq \langle A(x) \xi,\xi\rangle,\quad \mbox{ for all $\xi \in\R^{n+1}$ and a.e. $x\in\R^{n+1}$,}\\
&\langle A(x) \xi,\eta \rangle  \leq\Lambda |\xi| |\eta|, \quad \mbox{ for all $\xi, \eta \in\R^{n+1}$ and a.e. $x\in\R^{n+1}$.} \label{eqelliptic2}
\end{align}
We consider elliptic equations of the form
\begin{equation}\label{eq:ellipticeq}
L_A u(x):= -\mathrm{div}\left(A(\cdot) \nabla u (\cdot) \right)(x)=0,
\end{equation}
which are understood in the distributional sense.
We say that a function $u \in W^{1,2}_{\loc}(\om)$ is a {\it solution} of \eqref{eq:ellipticeq} or {\it $L_A$-harmonic} in an open set $\om \subset \R^{n+1}$ if
$$
\int A \nabla u \cdot \nabla \vphi = 0, \quad \mbox{ for all $\vphi \in C_c^\infty(\om)$.}
$$
Denote by $\mathcal{E}_A(x,y)$ or just $\mathcal{E}(x,y)$ when the matrix $A$ is clear from the context the {\it fundamental solution} for $L_A$ in $\R^{n+1}$, so that $L_A \mathcal{E}_A(x,y) = \delta_y$ in the distributional sense, where $\delta_y$ is the Dirac mass at the point $y \in \R^{n+1}$. See in \cite{HK} for its construction. The integral $\int \mathcal{E}_A(x,y)\,d\mu(y)$ is usually known as the {\it single layer potential} of $\mu$.
Consider the operator $T$ whose kernel is
\begin{equation}\label{eq:Kdef}
K(x,y) = \nabla_1 \mathcal{E}_A(x,y)
\end{equation}
(the subscript $1$ means that we take the gradient with respect to the first variable), so that for a measure $\mu$ we have
\begin{equation}\label{eq:Tmudef}
T\mu(x) = \int K(x,y) \,d\mu(y)
\end{equation}
when $x$ is away from $\mathrm{supp}(\mu)$.   That is, $T\mu$ is the gradient of the single layer potential of $\mu$.

Given a function $f\in L^1_{loc}(\mu)$,
set also
\begin{equation}\label{eq:Tfdef}
T_\mu f(x) = T(f\,\mu)(x) = \int K(x,y) f(y)\,d\mu(y),
\end{equation}
and, for $\ve>0$, consider the $\ve$-truncated version
$$
T_{\ve}\mu(x) = \int_{|x-y|>\ve} K(x,y) \,d\mu(y)
$$
For $f\in L^1_{loc}(\mu)$,
we also write $T_\mu f(x) = T(f\mu)(x)$ and
$T_{\mu,\ve} f(x) = T_\ve (f\mu)(x)$. We say that the operator $T_\mu$ is bounded on $L^2(\mu)$ if the operators $T_{\mu,\ve}$ are bounded in $L^2(\mu)$ uniformly on $\ve>0$.

In the special case that $A$ is the identity matrix, $-L_A$ is just the Laplacian and $T$ is the $n$-dimensional Riesz transform $\RR_\mu^n$  up to a multiplicative constant depending only on the dimension $n$.
In fact, the operator $T_\mu$ plays the same role in connection with elliptic measure as $\RR_\mu^n$ regarding
harmonic measure.

We will also assume that the matrix $A$ is H\"older continuous, that is, there exists $\alpha>0$ and $C_h>0$ such that
\begin{equation}\label{eq:Holdercont}
|a_{ij}(x)-a_{ij}(y)| \leq C_h |x-y|^\alpha\quad \mbox{ for all $x,y \in \mathbb{R}^{n+1}$,}
\end{equation}
for $1 \leq i,j \leq n+1$. This assumption is essential in this paper because it ensures that the kernel $K(\cdot,
\cdot)$ is locally of Calder\'on-Zygmund type. However, we remark that, in general, $K(\cdot,
\cdot)$ is neither homogeneous nor antisymmetric.
Nevertheless, when $\mu$ is uniformly $n$-rectifiable, the operator $T_\mu$
is still bounded in $L^2(\mu)$, analogously to the Riesz transform $\RR_\mu^n$. See Theorem \ref{teo2} below for
more details. %See also Section \ref{sec2} for the precise definition of the $L^2(\mu)$ boundedness of $T_\mu$.

The main result of this paper is the following.

\begin{ltheorem} \label{ENV.theoremA}
Let $\mu$ be a non-zero measure in $\mathbb R^{n+1}$  such that 
$0<\Theta^{n,*}(x,\mu)<\infty$ and
$\Theta_*^n(x,\mu)=0$ 
for $\mu$-a.e.\ $x\in\R^{n+1}$.  Let $A$ be an elliptic matrix satisfying \eqref{eqelliptic1}, \eqref{eqelliptic2} and \eqref{eq:Holdercont}, and let $T_\mu$ be the associated operator given by \eqref{eq:Tfdef}.
Then $T_\mu$ does not map $L^2(\mu)$ into itself.
\end{ltheorem}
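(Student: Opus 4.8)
The plan is to follow the strategy of Eiderman--Nazarov--Volberg \cite{ENV}, adapting it to the elliptic setting. Assume for contradiction that $T_\mu$ is bounded on $L^2(\mu)$. The first reduction is standard: after decomposing $\R^{n+1}$ into cubes and using that $\Theta^{n,*}(x,\mu)<\infty$ $\mu$-a.e., we may pass to a compactly supported piece of $\mu$ with bounded $n$-dimensional density from above, i.e.\ $\mu(B(x,r))\le C r^n$ for all $x,r$, while keeping the hypothesis that $\Theta^n_*(x,\mu)=0$ $\mu$-a.e.\ and that $T_\mu$ remains bounded on $L^2(\mu)$ (with a possibly larger norm). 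The growth bound together with the local Calder\'on--Zygmund nature of $K(\cdot,\cdot)$ guaranteed by \eqref{eq:Holdercont} allows one to run the standard Calder\'on--Zygmund / good-$\lambda$ machinery: $L^2(\mu)$ boundedness of $T_\mu$ self-improves to $L^p(\mu)$ bounds for $1<p<\infty$ and to weak $(1,1)$ bounds, and moreover the maximally truncated operator $T_{\mu,*}f=\sup_{\ve>0}|T_{\mu,\ve}f|$ is bounded on $L^2(\mu)$ as well, by a Cotlar-type inequality. These are the tools that replace the homogeneity/antisymmetry of the Riesz kernel, which $K$ lacks.

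Next comes the heart of the ENV argument: the construction of a \emph{variational} or \emph{stopping-time} martingale on a carefully chosen subset where the lower density is small. Using that $\Theta^n_*(x,\mu)=0$ $\mu$-a.e., for a fixed large parameter one selects, $\mu$-a.e.\ $x$, a scale at which $\mu(B(x,r))\le \delta r^n$ with $\delta$ small; by a Vitali/Besicovitch covering argument this produces, at many locations and scales, balls where $\mu$ is very thin. One then builds a sequence of "bump" functions $\psi_k$ (suitably normalized differences of pieces of $\mu$, or $L^\infty$ functions supported on these thin regions) and estimates $\|T_\mu\psi_k\|_{L^2(\mu)}$ from below. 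The key quantitative input is a lower bound for $|K(x,y)|$ and, more importantly, control of the \emph{oscillation} of $K$: since $K(x,y)=\nabla_1\mathcal{E}_A(x,y)$ behaves to leading order like the Riesz kernel $c\,\frac{x-y}{|x-y|^{n+1}}$ with a H\"older error of order $|x-y|^{\alpha-n}$ (this is the content of the Calder\'on--Zygmund estimates for $\mathcal{E}_A$ cited from \cite{HK}), the principal part drives the same lower estimates as in \cite{ENV}, and the error term is summable and hence harmless. One then shows that the $L^2(\mu)$ norms of the $T_\mu\psi_k$ cannot all be controlled by $\|\psi_k\|_{L^2(\mu)}$, contradicting boundedness. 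Concretely, this is typically packaged as: boundedness of $T_\mu$ plus the maximal estimate forces a certain square-function / $\beta$-number sum to be finite, which in turn forces positive lower density $\mu$-a.e., contradicting $\Theta^n_*=0$.

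More precisely, I expect the argument to be organized around showing that $L^2(\mu)$ boundedness of $T_\mu$ implies a nontrivial lower bound on the density: there is $c>0$ such that $\Theta^n_*(x,\mu)\ge c$ for $\mu$-a.e.\ $x$ in a set of positive measure. This is proved by contradiction at the local level: if $\mu$ is extremely thin in a ball $B$ at scale $r$, one constructs a test function and uses the lower bound on the kernel together with the growth condition to show $T_\mu$ moves a definite amount of $L^2$ mass, and iterating over a multiscale family of thin balls (which exist precisely because $\Theta^n_*=0$) makes the operator norm blow up. The main obstacle — and the place where genuine new work beyond transcribing \cite{ENV} is needed — is handling the non-convolution, non-antisymmetric nature of $K$: all the cancellation identities in \cite{ENV} that exploit $K(x,y)=-K(y,x)$ and scale invariance must be replaced by arguments using only the size bound $|K(x,y)|\lesssim |x-y|^{-n}$, the H\"older smoothness $|K(x,y)-K(x',y)|\lesssim |x-x'|^\alpha|x-y|^{-n-\alpha}$ for $|x-x'|\le \tfrac12|x-y|$ (and symmetrically in $y$), and the comparison of $K$ with the Riesz kernel at small scales. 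In particular the "variational" lower bounds for $T_\mu$ on bump functions must be extracted from the frozen-coefficient approximation $\mathcal{E}_{A(x_0)}(x,y)$ (a constant-coefficient elliptic fundamental solution, which \emph{is} a smooth homogeneous kernel of degree $1-n$, hence has an explicit gradient comparable to a Riesz-type kernel), with the variable-coefficient error absorbed via \eqref{eq:Holdercont}. Once that localization-and-freezing step is set up, the combinatorial core of \cite{ENV} — the stopping-time construction and the summation producing the contradiction — carries over essentially verbatim.
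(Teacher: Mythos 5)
Your proposal correctly identifies one genuine ingredient of the proof: freezing the coefficients, i.e.\ comparing $K(x,y)=\nabla_1\EE_A(x,y)$ with $\nabla_1\Theta(x,y;A(x))$, the gradient of a constant-coefficient fundamental solution, with an error $O(|x-y|^{\alpha-n})$ coming from \eqref{eq:Holdercont}. But the rest of the proposal has a genuine gap: it never supplies the mechanism that actually produces the contradiction. The core of the ENV method (and of this paper) is \emph{not} a square-function or $\beta$-number estimate, nor a direct lower bound on $\|T_\mu\psi_k\|_{L^2(\mu)}$ for bump functions via ``a lower bound for $|K(x,y)|$'' (no such pointwise lower bound is used, or available). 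It is a variational argument coupled with a maximum principle: one first replaces the low-density generation of $\mu$ inside a cube $Q$ by an absolutely continuous measure $\sigma$ (normalized Lebesgue measure on small balls), assumes for contradiction that $\|T\sigma\|_{L^2(\sigma)}^2\le\lambda\|\sigma\|$, minimizes the functional $F(g)=\lambda\|g\|_\infty\|\sigma\|+\int|T(g\sigma)|^2g\,d\sigma$, and differentiates at the minimizer to get the pointwise bound $|T\nu|^2+2T^*([T\nu]\nu)\lesssim\lambda$ on $\supp\nu$. The decisive PDE input is that $T^*\omega$ is $L_{A^*}$-harmonic off $\supp\omega$, so the maximum principle extends this bound to all of $\R^{n+1}$ (this is exactly why the approximation by an absolutely continuous $\sigma$ is needed); one then integrates against a vector field $\Psi_Q=\sum_R g_R$ with $T^*(g_R\,\LL^{n+1})=\varphi_R$ a bump on each high-density ball, obtaining $\mu(Q)\lesssim_\tau(\lambda+\ell(Q)^\alpha)^{1/4}\mu(Q)$, a contradiction. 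None of these steps appears in your outline, and your suggested packaging (``boundedness plus the maximal estimate forces a square-function sum to be finite, which forces positive lower density'') is not an argument that is known to close.

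Two further points. First, the global structure is also different from what you describe: rather than ENV's quasiorthogonality, the paper builds a martingale filtration $\Sigma$ from the David--Mattila lattice by alternating high-density and low-density stopping cubes (this uses both $\Theta^{n,*}>\tau_0$ and $\Theta_*^n=0$; your reduction keeps only the upper density bound), and gets the divergence of $\|T\mu\|_{L^2(\mu)}^2=\sum_Q\|\Delta_Q(T\mu)\|_{L^2(\mu)}^2$ from the key estimate $\|\Delta_Q(T\mu)\|_{L^2(\mu)}^2\gtrsim_\tau\mu(Q)$ on every generation. Second, your remark that the cancellation identities of ENV ``must be replaced by arguments using only the size and smoothness bounds'' is too optimistic: the variational step and the maximum principle genuinely need approximate antisymmetry, quantified as $\int_Q|K(x,y)+K(y,x)|\,d\mu(y)\lesssim\ell(Q)^\alpha$, which is exactly what the frozen-coefficient comparison provides; size and H\"older regularity of $K$ alone would not suffice.
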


As mentioned above,  when $A$ is the identity matrix, it turns out that $T_\mu=c\,\RR_\mu^n$ for some $c\in\R\setminus\{0\}$ and the result above was proved previously
by Eiderman, Nazarov and Volberg in \cite{ENV}.
As in this work, our main idea to prove Theorem \ref{ENV.theoremA} is to apply a variational argument which requires a maximum principle.
However, instead of the quasiorthogonality techniques from \cite{ENV}, we will
use orthogonality via a martingale difference decomposition involving the cubes of the David-Mattila lattice. Therefore, the 
general organization of the proof of Theorem \ref{ENV.theoremA} is quite different from the one in \cite{ENV}.

Finally, we would like to inform the reader about a future work by Prat, Puliatti and Tolsa \cite{PPT} which deals with the 
case that $\mu$ is an $n$-AD-regular measure in $\R^{n+1}$. It will be proved there that if $T_\mu$ is bounded in $L^2(\mu)$ (with $T_\mu$ as in Theorem \ref{ENV.theoremA}), then $\mu$ is uniformly $n$-rectifiable. This extends the results of Nazarov, Tolsa and Volberg \cite{NToV} for the Riesz transform to gradients of single layer potentials associated with elliptic operators with real H\"older continuous coefficients. The result in \cite{PPT} combined with Theorem \ref{ENV.theoremA} will imply that if $E\subset\R^{n+1}$ is a set with $\HH^n(E)<\infty$ such that $T_{\HH^n|_E}$ is bounded in $L^2(\HH^n|_E)$, then $E$ is $n$-rectifiable. This was previously proved for the $n$-dimensional Riesz transform in \cite{NToV-pubmat}.

%%%%%%%%%%%%%%%%%%%%%%%%%%%%%%%%%%%%%%%%%%%%%%%%%%%%%%%%%%%%%%%%%%%%%%%%%%%%%%%%%%%%%%%%%%%%%%%%%%%%%%%%%%%%%%

\subsection*{Notation}
In this paper  we will use the letters $c,C$ to denote
constants (quite often absolute constants, perhaps depending on $n$) which may change their values at different
occurrences. On the other hand, constants with subscripts, such as $C_1$, do not change their values
at different occurrences.

We will write $a\lesssim b$ if there is $C>0$ so that $a\leq Cb$ and $a\lesssim_{t} b$ if the constant $C$ depends on the parameter $t$. We write $a\approx b$ to mean $a\lesssim b\lesssim a$ and define $a\approx_{t}b$ similarly. 

%For sets $A,B\subset \R^{n+1}$, we let \[\dist(A,B)=\inf\{|x-y|:x\in A,y\in B\}, \;\; \dist(x,A)=\dist(\{x\},A),\]
We denote the open ball of radius $r$ centered at $x$ by $B(x,r)$. For a ball $B=B(x,r)$ and $a>0$ we write $r(B)$ for its radius and $a B=B(x,a r)$. %The notation $A(x,r_1,r_2)$ stands for an open annulus centered at $x$ with inner radius $r_1$ and outer radius $r_2$.

\vv
%%%%%%%%%%%%%%%%%%%%%%%%%%%%%%%%%%%%%%%%%%%%%%%%%%%%%%%%%%%%%%%%%%%%%%%%%%%%%%%%%%%%%%%%%%%%%%%%%%%%%%%%%%%%%%
\section{$L^p$ boundedness of $T_\mu$ for uniformly rectifiable measures and fundamental solutions}\label{sec2}

%For the rest of the paper we assume that $A(\cdot)$ is an $(n+1)\times (n+1)$ matrix of variable coefficients that is uniformly elliptic as in the introduction. Additionally, we assume that $A$ is H\"older continuous with exponent $\alpha>0$, that is,
%$$A(\cdot)\in L^\infty(\R^{n+1}\times \R^{n+1})\quad \text{ and }\quad
% \langle A(x) \xi, \xi \rangle \gtrsim |\xi|^2 \quad \mbox{ for all $x,\xi\in\R^{n+1}$,}$$
%and
%$$|A(x)-A(y)| \lesssim |x-y|^\alpha\quad \mbox{ for all $x,y \in \mathbb{R}^{n+1}$.}$$
%As before, we consider the elliptic operator $L_A u (x) = -\mathrm{div}\left(A(\cdot) \nabla u (\cdot) \right)(x)$. We denote by $\mathcal{E}_A(x,y)$ or $\mathcal{E}(x,y)$  the fundamental solution of the problem 
%$L_A u = 0$ and we consider the kernel
%$K(x,y) = \nabla_1 \mathcal{E}_A(x,y)$, and the associated operator
%$$T\mu(x) = \int K(x,y) \,d\mu(y)$$
%whenever the integral makes sense. For $\ve>0$, the $\ve$-truncated version is
%$$
%%T_{\ve}\mu(x) = \int_{|x-y|>\ve} K(x,y) \,d\mu(y)
%$$
%For $f\in L^1_{loc}(\mu)$,
%we also write $T_\mu f(x) = T(f\mu)(x)$ and
%$T_{\mu,\ve} f(x) = T_\ve (f\mu)(x)$. We say that the operator $T_\mu$ is bounded on $L^2(\mu)$ if the operators $T_{\mu,\ve}$ are bounded in $L^2(\mu)$ uniformly on $\ve>0$. 

For any uniformly elliptic matrix $A$ with H\"older continuous coefficients, one can show that $K(\cdot,\cdot)$ is locally a Calder\'on-Zygmund kernel:
\begin{lemma}\label{lemcz}
Let $A$ be an elliptic matrix satisfying \eqref{eqelliptic1}, \eqref{eqelliptic2} and \eqref{eq:Holdercont}. If $K(\cdot,\cdot)$ is given by \eqref{eq:Kdef}, then locally, it is a Calder\'on-Zygmund kernel. That is, for any given $R>0$,
\begin{itemize}
\item[(a)] $|K(x,y)| \lesssim |x-y|^{-n}$ for all $x,y\in \R^{n+1}$ with $x \not=y$ and $|x-y|\leq R$.
\item[(b)] $|K(x,y)-K(x,y')| + |K(y,x) - K(y',x)| \lesssim |y-y'|^{\gamma} |x-y|^{-n-\gamma}$ for all $y,y'\in B(x,R)$ with $2|y-y'| \leq |x-y|$.
\item[(c)] $|K(x,y)| \lesssim |x-y|^{\frac{1-n}2}$  for all $x,y\in \R^{n+1}$ with $|x-y|\geq 1$,
\end{itemize}
All the implicit constants in (a), (b) and (c) depend on $\Lambda$ and  $C_h$, while the ones in (a) and (b) depend also on $R$.
\end{lemma}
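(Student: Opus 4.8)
The plan is to obtain all three estimates from the known pointwise bounds on the fundamental solution $\mathcal{E}_A(x,y)$ and on its gradient, together with interior De Giorgi--Nash--Moser regularity for $L_A$-harmonic functions. Recall from the construction in \cite{HK} that for $n+1 \geq 3$ one has the global bound $|\mathcal{E}_A(x,y)| \lesssim |x-y|^{1-n}$ with the implicit constant depending only on $n$ and $\Lambda$ (and the analogous logarithmic bound in low dimensions, which only affects the regime $|x-y| \gtrsim 1$ and is harmless here). The function $u(\cdot) = \mathcal{E}_A(\cdot, y)$ is $L_A$-harmonic in $\R^{n+1} \setminus \{y\}$, so on a ball $B = B(x, |x-y|/2)$ it is a weak solution of a uniformly elliptic equation with no singularity. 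Interior Caccioppoli plus the sup bound give
$$
\avint_{B(x,\rho)} |\nabla u|^2 \; \lesssim \; \frac{1}{\rho^2}\,\Bigl(\sup_{B(x,2\rho)}|u|\Bigr)^2 \; \lesssim \; \frac{1}{\rho^2}\, |x-y|^{2(1-n)}
$$
for $\rho \approx |x-y|$, and since $\nabla u$ itself solves a (vector) elliptic system one upgrades this $L^2$-average bound to the pointwise bound $|\nabla u(x)| \lesssim |x-y|^{-n}$ by a standard iteration of Moser/De Giorgi estimates, valid once $|x-y| \leq R$ because the Hölder norm of $A$ enters only through $C_h R^\alpha$. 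This is exactly (a); the restriction $|x-y| \leq R$ and the $R$-dependence come from absorbing the term $C_h|x-y|^\alpha$ against the ellipticity constant.

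For (b), the point is that the Hölder continuity \eqref{eq:Holdercont} of the coefficients lifts to Hölder continuity of $\nabla_1 \mathcal{E}_A$. Fix $x$ and let $y, y' \in B(x,R)$ with $2|y-y'| \leq |x-y|$, and set $\delta = |x - y| \approx |x-y'|$. On the ball $B(y, \delta/2)$ the map $z \mapsto \mathcal{E}_A(x, z)$ — equivalently, using the symmetry/transpose relation $\mathcal{E}_A(x,z) = \mathcal{E}_{A^{t}}(z,x)$ for the adjoint operator — is a solution of an elliptic equation with Hölder coefficients away from $x$, so Schauder interior estimates give that $\nabla \mathcal{E}_A(x,\cdot)$ is $C^{\gamma}$ on $B(y,\delta/4)$ with
$$
|\nabla_1\mathcal{E}_A(x,y) - \nabla_1\mathcal{E}_A(x,y')| \;\lesssim\; \frac{|y-y'|^{\gamma}}{\delta^{\gamma}} \, \sup_{B(y,\delta/2)} |\nabla_1 \mathcal{E}_A(x,\cdot)| \;\lesssim\; |y-y'|^{\gamma}\,|x-y|^{-n-\gamma},
$$
where in the last step we used (a). Here one may take $\gamma = \min(\alpha, \alpha_0)$ where $\alpha_0$ is the De Giorgi--Nash exponent; again the constant depends on $R$ through $C_h R^\alpha$. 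The second term $|K(y,x) - K(y',x)|$ — i.e. Hölder continuity in the first slot — is obtained in the same way after passing to the transpose operator $L_{A^t}$, whose coefficients are equally Hölder continuous, so that $K(y,x) = \nabla_1 \mathcal{E}_A(y,x)$ is, up to the transpose, $\nabla_1 \mathcal{E}_{A^t}(x,y)$ evaluated with roles of variables exchanged; the same Schauder argument applies.

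Finally, (c) is the far-field estimate and follows by interpolating the decay already available: for $|x-y| \geq 1$ one has $|\mathcal{E}_A(x,y)| \lesssim |x-y|^{1-n}$ (or $\lesssim 1$ in low dimensions) but no better global decay is claimed, so applying Caccioppoli on $B(x,|x-y|/2)$ as above yields $|\nabla_1 \mathcal{E}_A(x,y)| \lesssim |x-y|^{-1}\sup_{B}|\mathcal{E}_A| \lesssim |x-y|^{-n}$, which is stronger than $|x-y|^{(1-n)/2}$ as soon as $|x-y| \geq 1$ and $n \geq 1$; thus (c) holds with room to spare. (The weaker exponent $(1-n)/2$ is stated presumably because it is all that is needed later, and because in the logarithmic low-dimensional cases one wants a uniform, clean bound.) I expect the main technical obstacle to be item (b): one must be careful that the Schauder estimate is applied on a ball where the solution has no singularity and where the relevant norm of $A$ is controlled — this is precisely why the statement is local (the hypothesis $|x-y| \leq R$) and why the constants depend on $R$ — and one must correctly handle the first-variable difference via the adjoint operator, since $K$ is neither symmetric nor antisymmetric.
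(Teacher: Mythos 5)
Your treatment of (a) and (b) follows the standard route (the paper simply cites \cite{KS11} for these parts), with one small correction: for merely H\"older continuous coefficients, $\nabla u$ does \emph{not} solve an elliptic system (that would require differentiating $A$); the correct tool for passing from the Caccioppoli $L^2$-average bound to a pointwise gradient bound is the interior Schauder ($C^{1,\alpha}$) estimate, which is in fact what you invoke, correctly, in your argument for (b).

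The genuine gap is in (c). You claim that Caccioppoli on $B(x,|x-y|/2)$ ``yields $|\nabla_1\mathcal{E}_A(x,y)|\lesssim |x-y|^{-1}\sup_B|\mathcal{E}_A|\lesssim |x-y|^{-n}$,'' but Caccioppoli only controls the \emph{$L^2$ average} of the gradient over a ball of radius $\rho\approx|x-y|$; converting that into a pointwise bound at $x$ requires an interior gradient estimate at scale $\rho$, and --- as you yourself note when justifying the $R$-dependence in (a) --- the constant in that estimate depends on the H\"older seminorm of the rescaled coefficients, which is $C_h\rho^\alpha$ and blows up as $\rho\to\infty$. So the pointwise gradient estimate is available only at bounded scales, and for $|x-y|$ large one can apply it only on a unit ball. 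This is exactly what the paper does: Caccioppoli at scale $r=|x-y|>8$ gives
$\int_{B(x,2)}|\nabla_1\mathcal{E}_A(\cdot,y)|^2\leq \int_{B(x,r/4)}|\nabla_1\mathcal{E}_A(\cdot,y)|^2\lesssim r^{n+1}\cdot r^{-2}\cdot r^{2(1-n)}=r^{1-n}$
(the full integral over a unit-size ball, not the average over the large ball), and then the unit-scale estimate $\|\nabla_1\mathcal{E}_A(\cdot,y)\|_{L^\infty(B(x,1))}\lesssim\|\nabla_1\mathcal{E}_A(\cdot,y)\|_{L^2(B(x,2))}$ produces precisely $r^{(1-n)/2}$. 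The exponent $(1-n)/2$ is therefore not a weakening chosen ``because it is all that is needed later'' --- it is what the argument actually yields --- and your claimed improvement to $|x-y|^{-n}$ in the far field is unjustified as written.
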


\begin{proof}
The lemma follows from standard arguments. For (a) and (b) see e.g. \cite[p.5]{KS11} for details.
To show (c) we can assume that $r:=|x-y|>8$
because otherwise this follows from the estimate (a) with $R=10$, say.
Since $\EE_A(\cdot,y)$ is $L_A$-harmonic away from $y$,  by Caccioppoli's inequality we have
$$\avint_{B(x,r/4)}|\nabla_1 \EE_A(z,y)|^2\,d\LL^{n+1}(z)
\lesssim \frac1{r^2}\avint_{B(x,r/2)}|\EE_A(z,y)|^2\,d\LL^{n+1}(z).$$
Using that $\EE_A(z,y)\lesssim |y-z|^{1-n}\approx r^{1-n}$ in the integral above, we get
\begin{align*}
\int_{B(x,2)}|\nabla_1 \EE_A(z,y)|^2\,d\LL^{n+1}(z) &\lesssim r^{n+1}
\avint_{B(x,r/4)}|\nabla_1 \EE_A(z,y)|^2\,d\LL^{n+1}(z)\\
& \lesssim \frac{r^{n+1}\,r^{2(1-n)}}{r^2} = r^{1-n}.
\end{align*}
Now, by standard results from elliptic PDE's,
$$\|\nabla_1 \EE_A(\cdot,y)\|_{C^\alpha(B(x,1))}\lesssim \|\nabla_1 \EE_A(\cdot,y)\|_{L^2(B(x,2))}\lesssim
r^{\frac{1-n}2} = |x-y|^{\frac{1-n}2},$$
which implies (c).
\end{proof}

\subsection{Reduction to constant, symmetric coefficients}
\label{ENV.subsecConst}

%%%%%%%%%%%%%%%%%%%% SUBSECTION CONSTANT COEF %%%%%%%%%%%%%%%%%%%%%%

If $E$ is an elliptic matrix with real and constant coefficients, we denote
\begin{equation}\label{eq:Thetadef}
\Theta(x,y;E) : = \mathcal{E}_E(x,y),
\end{equation}
where $\mathcal{E}_E$ is the fundamental solution of the elliptic operator whose (constant) matrix is $E$. Note that $\mathcal{E}_E$ is symmetric, and moreover,
$$
\Theta(x,y;E)  = \Theta(x-y,0;E) = \Theta(y-x,0;E) = \Theta(y,x;E).   
$$
 Also, having fixed the constant matrix $E = (e_{ij})$, we may assume without loss of generality that $L_E$ has  symmetric coefficients. Indeed, if we denote by $E_{\mathrm{sym}}$ the matrix with entries $e_{ij}^{\mathrm{sym}} :=(e_{ij}+e_{ji})/2$, then  %since you can exploit that \partial_ij u = \partial_iji u. 
\begin{align*}
-L_Eu& = \sum_{i,j} e_{ij} \partial_i  \partial_j u= \frac12\sum_{i,j} e_{ij} \partial_i  \partial_j u + \frac12\sum_{i,j} e_{ij} \partial_j \partial_i u\\ &=  \sum_{i,j} \frac{e_{ij} + e_{ji}}{2} \,\partial_i \partial_j u = -L_{\mathrm{sym}}  u.
\end{align*}
Therefore, it is clear that a function $u$ solves $L_Eu=-\mathrm{div} ( E \nabla u)=0$  if and only if it solves $L_{\mathrm{sym}} u =-\mathrm{div} (E_{\mathrm{sym}} \nabla u) =0$.

It turns out that in small scales, we may approximate  our non-symmetric kernel $K_A(\cdot, \cdot)$ (associated with the matrix $A$) by another one $\Theta(\cdot,\cdot; E_A)$ (associated with a constant coefficient matrix $E_A$). The precise result is the following:

\begin{lemma} \label{ENV.constantCoef}
Let $A$ be an elliptic matrix satisfying \eqref{eqelliptic1}, \eqref{eqelliptic2} and \eqref{eq:Holdercont}. Let also $\Theta(\cdot,\cdot; \cdot)$ be given by \eqref{eq:Thetadef}. Then for $R>0$ and for all $x,y\in B(0,R)$ we have
\begin{enumerate}
\item $|\mathcal{E}_A (x,y) - \Theta(x,y;A(x))| \lesssim |x-y|^{\alpha-n+1}$.
\item $|\nabla_1\mathcal{E}_A (x,y) - \nabla_1\Theta(x,y;A(x))| \lesssim |x-y|^{\alpha-n}$.
\item $|\nabla_1\mathcal{E}_A (x,y) - \nabla_1\Theta(x,y;A(y))| \lesssim |x-y|^{\alpha-n}$.
\end{enumerate}
Similar inequalities hold if we reverse the roles of $x$ and $y$ and replace $\nabla_1$ by $\nabla_2$.
All the implicit constants depend on $\Lambda$,  $C_h$, and  $R$.
\end{lemma}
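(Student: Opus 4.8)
The plan is to establish (1) and (2) directly, by freezing the coefficients at $x$, and then to deduce (3) — together with the variants in which $\nabla_1$ is replaced by $\nabla_2$ and/or the roles of $x$ and $y$ are exchanged — from (1) and (2) by soft arguments. Fix $x,y\in B(0,R)$ and write $r=|x-y|$. It suffices to treat the regime $r\le R/10$, since for $r\approx R$ all the stated inequalities are immediate from the size bounds for $\mathcal{E}_A$, $\Theta$ and their gradients (see \cite{HK} and Lemma \ref{lemcz}), because $r^{\alpha}\approx_R 1$ in that range.

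To prove (1), set $E=A(x)$ and $u(z):=\mathcal{E}_A(z,y)-\Theta(z,y;E)$. Since $L_A\mathcal{E}_A(\cdot,y)=\delta_y=L_E\Theta(\cdot,y;E)$, a direct computation gives $L_A u=\mathrm{div}\,F$ weakly, with $F:=(A-E)\,\nabla_1\Theta(\cdot,y;E)$. By \eqref{eq:Holdercont} and the classical bound $|\nabla_1\Theta(z,y;E)|\lesssim_\Lambda|z-y|^{-n}$ — here one uses that $\Theta(\cdot,y;E)=\mathcal{E}_E(\cdot,y)$ is explicit, homogeneous of degree $1-n$ in $z-y$, with constants controlled by $\Lambda$ — one gets $|F(z)|\lesssim|z-x|^{\alpha}|z-y|^{-n}$ on any bounded region. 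After localizing with a cut-off at scale $R$ (which modifies $u$ only outside $B(0,5R)$ and produces extra source terms supported in $B(0,10R)\setminus B(0,5R)$, hence contributing $O_R(1)$ and being harmless for $r\le R/10$), one may represent $u$ through $\mathcal{E}_A$:
$$u(z)=-\int \nabla_2\mathcal{E}_A(z,\zeta)\cdot F(\zeta)\,d\zeta+(\text{harmless }O_R(1)\text{ terms}).$$
Splitting this integral into $\zeta\in B(x,2r)$ and its complement, bounding $\nabla_2\mathcal{E}_A$ by Lemma \ref{lemcz} (applied also to $A^T$, via $\mathcal{E}_A(z,\zeta)=\mathcal{E}_{A^T}(\zeta,z)$), using $|z-x|^{\alpha}\lesssim r^{\alpha}$ on $B(x,2r)$, and invoking standard convolution estimates, one obtains $|u(z)|\lesssim r^{\alpha-n+1}$ for all $z\in B(x,r/2)$; taking $z=x$ gives (1).

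For (2), observe that on the ball $B(x,r/2)$ — which avoids the pole $y$ — the function $u$ solves $L_A u=\mathrm{div}\,F$ with $A\in C^{0,\alpha}$ and with $F\in C^{0,\alpha}(B(x,r/2))$ satisfying $\|F\|_{L^\infty}\lesssim r^{\alpha-n}$ and $r^{\alpha}[F]_{C^{0,\alpha}}\lesssim r^{\alpha-n}$ (the relevant seminorm of $\nabla_1\Theta(\cdot,y;E)$ on this ball being $\lesssim r^{-n-\alpha}$, again by homogeneity). Interior $C^{1,\alpha}$ Schauder estimates for divergence-form equations with Hölder coefficients then yield, after rescaling,
$$\|\nabla u\|_{L^\infty(B(x,r/4))}\lesssim \tfrac1r\|u\|_{L^\infty(B(x,r/2))}+\|F\|_{L^\infty(B(x,r/2))}+r^{\alpha}[F]_{C^{0,\alpha}(B(x,r/2))}\lesssim r^{\alpha-n},$$
and evaluating $\nabla u=\nabla_1\mathcal{E}_A(\cdot,y)-\nabla_1\Theta(\cdot,y;E)$ at $z=x$ gives (2). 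Then (3) follows from (2) and the triangle inequality via the elementary comparison $|\nabla_1\Theta(x,y;A(x))-\nabla_1\Theta(x,y;A(y))|\lesssim|A(x)-A(y)|\,|x-y|^{-n}\lesssim|x-y|^{\alpha-n}$ — using that $\nabla_1\Theta(x,y;E)$ depends smoothly on the elliptic matrix $E$, uniformly over the ellipticity class, and is homogeneous of degree $-n$ in $x-y$ — while the assertions with $\nabla_1\to\nabla_2$ and $x\leftrightarrow y$ come from running the same arguments for $A^T$.

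The step I expect to be the main obstacle is the one in the second paragraph: passing from the formally natural but only conditionally convergent global representation (note that $F\sim|\zeta|^{-n}$ at infinity, just barely non-integrable in $\R^{n+1}$) to a genuinely valid local identity by means of the cut-off, and verifying that the resulting error terms are negligible. The remaining ingredients — the size, homogeneity and smooth dependence on $E$ of the constant-coefficient fundamental solution and its derivatives, the Calder\'on--Zygmund bounds of Lemma \ref{lemcz}, and interior $C^{1,\alpha}$ Schauder estimates for $-\mathrm{div}(A\nabla u)=\mathrm{div}\,F$ with $A,F\in C^{0,\alpha}$ — are classical and all hinge on the H\"older hypothesis \eqref{eq:Holdercont}; the rest is routine convolution and scaling bookkeeping.
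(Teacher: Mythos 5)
Your argument is, in substance, the standard coefficient-freezing proof that underlies the paper's citation of \cite{KS11} (the paper itself gives no proof of this lemma): set $u=\mathcal{E}_A(\cdot,y)-\Theta(\cdot,y;A(x))$, observe $L_Au=\mathrm{div}\bigl((A-A(x))\nabla_1\Theta(\cdot,y;A(x))\bigr)$, estimate $u$ via the (localized) representation formula, and upgrade to the gradient bound by interior $C^{1,\alpha}$ estimates on $B(x,r/2)$. For $n\geq 2$ the scheme closes: the near-field part of the integral gives $r^{\alpha}\cdot r^{1-n}$, the far field gives $\int_{2r}^{CR}t^{\alpha-n}\,dt\approx r^{\alpha+1-n}$ because $\alpha-n<-1$, the cut-off errors are $O_R(1)\lesssim_R r^{\alpha+1-n}$, and your Schauder bookkeeping ($\|F\|_\infty\lesssim r^{\alpha-n}$, $r^{\alpha}[F]_{C^{0,\alpha}}\lesssim r^{\alpha-n}$) is correct, as are the deduction of (3) from (2) via Lipschitz dependence of $\nabla_1\mathcal{E}_E$ on $E$ together with homogeneity, and the reduction of the $\nabla_2$ and $x\leftrightarrow y$ statements to $A^T$.

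The one step that genuinely fails is the planar case $n=1$, which the statement covers. There the far-field integral is $\int_{2r}^{CR}t^{\alpha-1}\,dt\approx R^{\alpha}=O_R(1)$, and the cut-off errors are likewise only $O_R(1)$; since the target in (1) is $r^{\alpha+1-n}=r^{\alpha}\to 0$, this does not prove (1), and it then also breaks (2), because the term $r^{-1}\|u\|_{L^\infty(B(x,r/2))}$ in your Schauder estimate only yields $r^{-1}$, which is much larger than $r^{\alpha-1}$. (In two dimensions the fundamental solutions are logarithmic and (1) is really a statement about normalizations.) The standard repair is to use that the Schauder estimate sees $u$ only modulo constants: bound $\operatorname{osc}_{B(x,r/2)}u$ rather than $\|u\|_{\infty}$ by differencing the representation formula in $z$, so that the far-field kernel gains a factor $|z-z'|^{\gamma}|\zeta-x|^{-\gamma}$ from Lemma \ref{lemcz}(b) (applied to $A^T$) and the tail becomes summable; note that \cite{KS11} is formulated in ambient dimension at least $3$, so the paper's citation shares this blind spot. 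Beyond that, the only soft spot is the one you flag yourself: justifying the localized representation formula, i.e., that $\chi u$ minus the potential of its (compactly supported) source is an entire $L_A$-harmonic function vanishing at infinity, hence zero by a Liouville theorem — standard, but it does need to be said, including the verification that the two Dirac masses at $y$ cancel so that $L_Au=\mathrm{div}F$ holds across the pole.
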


For the proof of the above result, see \cite[Lemma 2.2]{KS11}. This entails as a very easy consequence that for any ball $B$ and $x\in B$,
\begin{equation}\label{eqap*1}
\int_B |\nabla_1\mathcal{E}_A (x,y) - \nabla_1\Theta(x,y;A(x))| d\mu(x) \lesssim r(B)^\alpha,
\end{equation}
assuming that $\mu$ has $n$-polynomial growth, i.e., there exists $c_0>0$ such that
$$\mu(B(x,r))\leq c_0\,r^n\quad\mbox{ for all $x\in\R^{n+1}$ and all $r>0$.}$$
%The rest of the estimates in Lemma \ref{ENV.constantCoef} follow in a similar way.

\subsection{Uniform rectifiability and singular integrals}
%Let $\mu$ be a Borel measure on $\R^d$. We say that $\mu$ has $n$-polynomial growth (with constant $c_0$) if
%$$\mu(B(x,r))\leq c_0\,r^n\quad\mbox{ for all $x\in\R^{n+1}$ and all $r>0$.}$$

%Recall also that  $\mu$ is called $n$-AD-regular (or just AD-regular or Ahlfors-David regular) if there exists a constant $C>0$ such that
%$$C^{-1}r^n\leq \mu(B(x,r))\leq C\,r^n\quad \mbox{ for all $x\in
%\supp(\mu)$ and $0<r\leq \diam(\supp(\mu))$.}$$
We say that a measure $\mu$ is uniformly $n$-rectifiable in $\R^d$, for $1 \leq n\leq d$, if it is 
$n$-AD-regular and
there exist $\theta, M >0$ such that for all $x \in \supp(\mu)$ and all $r>0$ 
there is a Lipschitz mapping $g$ from the ball $B_n(0,r)$ in $\R^{n}$ to $\R^d$ with $\text{Lip}(g) \leq M$ such that$$
\mu (B(x,r)\cap g(B_{n}(0,r)))\geq \theta r^{n}.$$
The notion of uniform $n$-rectifiability was introduced by David and Semmes and it is a quantitative version of 
$n$-rectifiability.
%In the case $n=1$, it is known that $\mu$ is uniformly $1$-rectifiable if and only if $\supp(\mu)$ is contained in a rectifiable curve $\Gamma$ in $\R^d$ such that the arc length measure on $\Gamma$ is $1$-AD-regular.

As we saw in the introduction, a set $E\subset\R^d$ is called $n$-AD-regular if $\HH^n|_E$ is $n$-AD-regular, and it is called
uniformly $n$-rectifiable if $\HH^n|_E$ is uniformly  $n$-rectifiable. See \cite{DS2} for more information on uniform rectifiability. The following theorem is due to G. David and S. Semmes:

\begin{theorem}\label{thm:TDavid}
Let $\mu$ be a uniformly $n$-rectifiable measure in $\R^d$. Let
$K: \R^{d} \setminus\{0\} \to \R$ be a kernel which is odd and homogeneous of degree $-n$, i.e., $K(-x)=-K(x)$ and $K(\lambda x) = \lambda^{-n} K(x)$, such that for some  $M \in \mathbb N$ it holds
\begin{align}\label{eq:djkernel}
|\nabla_j K (x)| \lesssim_n C(j)\, |x|^{-n-j}, \quad \textup{for all} \,\, 0 \leq j \leq M \,\,  \textup{and}\,\,  x \in \R^{n+1} \setminus \{0\}.
\end{align}
Consider the associated operator
$$T_{K, \mu} f(x) = \int K(x-y) f(y) d \mu(y).$$
 Then, for $1<p<\infty$, $T_{K,\mu}:L^p(\mu) \to L^p(\mu)$ is bounded, with bounds that depend on $p$, on the constants $C(j)$, and on the uniform rectifiability constants of $\mu$.
%$$\|T\|_{L^p(\mu) \to L^p(\mu)} \lesssim_{p. UR} \| K_{|\mathbb S^n}\|_{C^N}.$$
\end{theorem}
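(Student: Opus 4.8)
The statement to prove is Theorem \ref{thm:TDavid} (the David--Semmes theorem on $L^p$ boundedness of odd homogeneous CZ operators for uniformly rectifiable measures). Let me sketch a proof plan.

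\textbf{Plan of proof.} The strategy is the classical one due to David and Semmes: reduce the $L^p$ bound to an $L^2$ bound via Calder\'on--Zygmund theory, then establish the $L^2$ bound by exploiting the parametrization of $\mu$ by Lipschitz graphs together with a good-$\lambda$ / corona-type decomposition of $\supp\mu$.

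First I would record that $T_{K,\mu}$ is a Calder\'on--Zygmund operator in the sense that its kernel $K(x-y)$ satisfies the size bound $|K(x-y)|\lesssim |x-y|^{-n}$ and the smoothness bound $|K(x-y)-K(x-y')|\lesssim |y-y'|\,|x-y|^{-n-1}$ for $2|y-y'|\le |x-y|$, both of which follow immediately from \eqref{eq:djkernel} with $j=0,1$ and the homogeneity, after restricting to a single scale and rescaling. Together with $n$-AD-regularity of $\mu$, standard Calder\'on--Zygmund theory on spaces of homogeneous type then shows that it suffices to prove that the truncated operators $T_{K,\mu,\ve}$ are bounded on $L^2(\mu)$ uniformly in $\ve$; the $L^p$ bounds for $1<p<\infty$ follow by interpolation and duality (using oddness of $K$, so that the formal adjoint has kernel $K(y-x)=-K(x-y)$, i.e. $T_{K,\mu}^\ast=-T_{K,\mu}$ up to the symmetry of $\mu$).

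For the $L^2(\mu)$ bound, I would use the characterization of uniform rectifiability via \emph{big pieces of Lipschitz graphs} and a corona decomposition. Concretely: build the David (or David--Mattila) dyadic lattice on $\supp\mu$; by the corona construction of David--Semmes, $\supp\mu$ admits a decomposition of its dyadic cubes into a family $\mathsf{Tree}$ of ``trees'', each tree being well-approximated at all its scales by a single Lipschitz graph $\Gamma$ with small slope, and with the stopping cubes (tops of trees) satisfying a Carleson packing condition. On a single Lipschitz graph one proves the $L^2$ boundedness of $T_{K,\mu}$ directly: parametrize $\Gamma$ by $\R^n$, compare $K$ restricted to $\Gamma$ with a convolution CZ kernel on $\R^n$ plus an error controlled by the slope, and invoke the classical $L^2$-boundedness of odd CZ operators on $\R^n$ (or use the method of rotations, since $K$ is odd and homogeneous, to reduce to the directional Hilbert transform along graphs, which is bounded by Coifman--McIntosh--Meyer). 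One then sums the local estimates over the trees: inside each tree one controls $T_{K,\mu}$ acting on functions supported there by the graph estimate, the interaction between different trees is handled by the CZ kernel decay and the Carleson packing of the stopping cubes, and a standard $Tb$ or square-function/$\sum_Q |\langle T_{K,\mu} b_Q, \cdot\rangle|$ argument glues everything together to yield $\|T_{K,\mu}\|_{L^2(\mu)\to L^2(\mu)}<\infty$ with the claimed dependence on $p$, the $C(j)$, and the UR constants. (Only $M=2$, i.e. $C(0),C(1),C(2)$, is actually needed; the extra smoothness is harmless.)

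\textbf{Main obstacle.} The genuinely hard step is the $L^2$ boundedness on a single Lipschitz graph together with the corona/stopping-time machinery that patches the graph pieces into a global bound --- this is the technical heart of David--Semmes theory and is where the oddness and homogeneity of $K$ are essential (they allow the method of rotations, reducing to a $1$-dimensional Hilbert-transform-type operator along Lipschitz curves). Everything else --- the CZ kernel estimates, the reduction from $L^p$ to $L^2$, the duality argument using oddness --- is routine. Since this theorem is quoted from David--Semmes (see \cite{DS2}), in the paper one would simply cite it rather than reproduce the corona argument; the proof above is the blueprint of what that reference carries out in full.
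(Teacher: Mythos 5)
The paper does not prove this theorem: it is quoted from David and Semmes \cite{DS2}, exactly as you anticipate in your final paragraph, and your sketch (Calder\'on--Zygmund reduction of $L^p$ to $L^2$, corona decomposition into Lipschitz-graph trees with Carleson-packed stopping cubes, single-graph estimate, interaction estimates between trees) is a faithful outline of the argument carried out in that reference. Two small caveats: for $n>1$ the single-graph estimate proceeds via an expansion of the odd homogeneous kernel in spherical harmonics combined with Coifman--McIntosh--Meyer type bounds, not a literal method of rotations reducing to a one-dimensional Hilbert transform, and accordingly the number of derivatives required grows with $n$ (compare Proposition \ref{prop:Mit-Tay}, where $M=M(n)$), so your parenthetical claim that $M=2$ suffices is not justified.
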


One can use Theorem \ref{thm:TDavid} to prove the following proposition:

\begin{proposition}[Proposition 1.2 in \cite{MiTa}]\label{prop:Mit-Tay}
Let $\mu$ be  a uniformly $n$-rectifiable measure in $\R^{n+1}$. There exists $M=M(n)$ such that the following holds. Let $b(x,y)$ be odd in $x$ and homogeneous of degree $-n$ in $x$, and assume that $\partial^\beta_x b(x,y)$ is continuous and bounded in $\mathbb S^n \times \R^{n+1}$, for $|\beta| \leq M$. Then $b(x-y, x)$ is the kernel of an operator $\bf B_\mu$, bounded on $L^p(\mu)$, for $1 <p <\infty$.
\end{proposition}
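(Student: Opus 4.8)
\emph{Sketch of the approach.} The plan is to expand $b(\cdot,y)$ in spherical harmonics in its first slot and apply Theorem~\ref{thm:TDavid} term by term. Fix a real orthonormal basis $\{Y_{k,m}\}_{m=1}^{d_k}$ of the space of degree-$k$ spherical harmonics on $\mathbb S^n$, so $d_k\lesssim (1+k)^{n-1}$ and $\Delta_{\mathbb S^n}Y_{k,m}=-\lambda_k Y_{k,m}$ with $\lambda_k=k(k+n-1)$. Since for every $y\in\R^{n+1}$ the restriction $b(\cdot,y)|_{\mathbb S^n}$ belongs to $C^M(\mathbb S^n)$ with norm bounded uniformly in $y$, we may write
\[
b(\theta,y)=\sum_{k\ge 0}\sum_{m=1}^{d_k}a_{k,m}(y)\,Y_{k,m}(\theta),\qquad a_{k,m}(y)=\int_{\mathbb S^n}b(\theta,y)\,Y_{k,m}(\theta)\,d\sigma(\theta).
\]
Because $b$ is odd in $x$ while $Y_{k,m}(-\theta)=(-1)^kY_{k,m}(\theta)$, the coefficients $a_{k,m}$ vanish for even $k$. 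Integrating by parts $2s$ times against $\Delta_{\mathbb S^n}$ (legitimate provided $2s\le M$) and using Bessel's inequality, one gets $|a_{k,m}(y)|\lesssim\lambda_k^{-s}\,\|b(\cdot,y)\|_{C^{2s}(\mathbb S^n)}$, so $\|a_{k,m}\|_{L^\infty(\R^{n+1})}\lesssim (1+k)^{-2s}$, uniformly in $m$.

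Extend each $Y_{k,m}$ to a function homogeneous of degree $0$ and set $K_{k,m}(x)=|x|^{-n}Y_{k,m}(x/|x|)$ for $x\ne 0$. For odd $k$ this is an odd kernel, homogeneous of degree $-n$, and by the classical polynomial bounds for normalized spherical harmonics, $|\nabla^jK_{k,m}(x)|\lesssim_j (1+k)^{b_j}\,|x|^{-n-j}$ for exponents $b_j=b_j(n)$. Let $M_0=M_0(n)$ be the number of derivatives required in Theorem~\ref{thm:TDavid} and put $N:=\max_{0\le j\le M_0}b_j$. That theorem, applied to $K_{k,m}$ and to its $\ve$-truncations, gives that $T_{K_{k,m},\mu}$ is bounded on $L^p(\mu)$ for $1<p<\infty$ with $\|T_{K_{k,m},\mu}\|_{L^p(\mu)\to L^p(\mu)}\lesssim_{p,\mu}(1+k)^{N}$, since the bound there depends at most polynomially on the constants $C(j)$, $j\le M_0$.

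Inserting the expansion of $b$ into $\mathbf B_\mu f(x)=\int b(x-y,x)\,f(y)\,d\mu(y)$ and exchanging sum and integral gives $\mathbf B_\mu f=\sum_{k\ \mathrm{odd}}\sum_{m=1}^{d_k}a_{k,m}\cdot T_{K_{k,m},\mu}f$. As multiplication by the bounded function $a_{k,m}$ is bounded on $L^p(\mu)$, each summand is bounded on $L^p(\mu)$, and
\[
\sum_{k}\sum_{m=1}^{d_k}\|a_{k,m}\|_{\infty}\,\|T_{K_{k,m},\mu}f\|_{L^p(\mu)}\lesssim_{p,\mu}\sum_k d_k\,(1+k)^{-2s+N}\,\|f\|_{L^p(\mu)}\lesssim_{p,\mu}\Big(\sum_k (1+k)^{n-1-2s+N}\Big)\|f\|_{L^p(\mu)}.
\]
Taking $M=2s$ with $2s>n+N$ makes this series converge, which proves that $\mathbf B_\mu$ — more precisely, every truncation $\mathbf B_{\mu,\ve}$, with bound uniform in $\ve$ — is bounded on $L^p(\mu)$; this fixes the admissible $M=M(n)$. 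The rearrangement and the identification of $\mathbf B_\mu f$ with the series are justified by the uniform (indeed $C^M$) convergence of the spherical-harmonic expansion of $b(\cdot,x)|_{\mathbb S^n}$ together with the $n$-polynomial growth of $\mu$, which holds because $\mu$ is uniformly $n$-rectifiable, hence $n$-AD-regular.

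The only real issue is bookkeeping: one must trade the $M$ derivatives of $b$ in the first variable against the combined polynomial-in-$k$ growth coming from (i) the dimension $d_k\approx k^{n-1}$ of degree-$k$ spherical harmonics, (ii) the $C^j$-norms of the normalized spherical harmonics feeding the kernel constants $C(j)$, and (iii) the dependence of the $L^p$ operator norm in Theorem~\ref{thm:TDavid} on those constants. All three grow at most polynomially in $k$ with exponents controlled by $n$ and by the fixed $M_0(n)$, so a large enough $M=M(n)$ works, and no analytic ingredient beyond Theorem~\ref{thm:TDavid} and standard spherical-harmonic estimates is needed.
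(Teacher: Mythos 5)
Your argument is correct and is precisely the route the paper indicates: it cites Proposition 1.2 of Mitrea--Taylor and remarks that the proof carries over to uniformly rectifiable measures by expanding the kernel in spherical harmonics and invoking Theorem \ref{thm:TDavid}. You have simply written out the details (coefficient decay from the $C^M$ bound in $x$, the fact that the second slot of $b(x-y,x)$ produces a bounded multiplier $a_{k,m}(x)$ outside the integral, and the polynomial-in-$k$ bookkeeping), which the paper leaves to the reference.
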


The above proposition was proved under the assumption that $\mu= \HH^n_{|\Gamma}$, where $\Gamma$ is a Lipschitz graph. However, the same proof works for uniformly $n$-rectifiable measures using the expansion of the kernel in spherical harmonics once we assume Theorem \ref{thm:TDavid}. %For more detailed  arguments one can see the proof of Theorem 5.8 on p.65 in \cite{HMMM}.

 Next we apply the preceding proposition to the operator $T_\mu$ associated with the elliptic matrix $A$.

\begin{theorem}\label{teo2}
Let $\mu$ be a uniformly $n$-rectifiable measure with compact support in $\R^{n+1}$. Let $A$ be an elliptic matrix satisfying \eqref{eqelliptic1}, \eqref{eqelliptic2} and \eqref{eq:Holdercont},
and let $T_\mu$ be the associated operator given by \eqref{eq:Tfdef}.
 Then, for $1<p<\infty$, $T_\mu: L^p(\mu) \to  L^p(\mu)$ is bounded.
\end{theorem}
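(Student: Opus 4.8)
The plan is to reduce the operator $T_\mu$ to an operator of the type covered by Proposition \ref{prop:Mit-Tay} plus an error term which is a local Calderón--Zygmund operator with a kernel whose size and smoothness gain an extra power of $|x-y|^\alpha$, so that the error term is bounded on $L^p(\mu)$ by standard Calderón--Zygmund theory with respect to the $n$-AD-regular measure $\mu$. The starting point is the decomposition coming from Lemma \ref{ENV.constantCoef}: writing $R$ for a radius with $\supp\mu \subset B(0,R/2)$, for $x,y\in\supp\mu$ we split
\[
K(x,y) = \nabla_1\mathcal{E}_A(x,y) = \nabla_1\Theta(x,y;A(y)) + \bigl[\nabla_1\mathcal{E}_A(x,y) - \nabla_1\Theta(x,y;A(y))\bigr],
\]
and by part (3) of Lemma \ref{ENV.constantCoef} the bracketed remainder is bounded by $C|x-y|^{\alpha-n}$. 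Together with the Hölder continuity \eqref{eq:Holdercont} of the coefficients (which, by the same elliptic PDE estimates used in Lemma \ref{ENV.constantCoef} and Lemma \ref{lemcz}, transfers to a Hölder/Lipschitz-type control of the $y$-derivatives of the remainder), this remainder kernel satisfies, on $B(0,R)\times B(0,R)$, the estimates $|\cdot|\lesssim|x-y|^{\alpha-n}$ and a Hölder-continuity bound $\lesssim |y-y'|^{\gamma'}|x-y|^{\alpha-n-\gamma'}$ for a suitable $\gamma'>0$. Since $\mu$ is $n$-AD-regular and compactly supported, such a kernel defines a bounded operator on $L^p(\mu)$ for $1<p<\infty$ — one checks $L^2$-boundedness directly by a Schur-type/absolute-convergence estimate using $\int_{B(x,R)}|x-y|^{\alpha-n}\,d\mu(y)\lesssim R^\alpha<\infty$, and then invokes Calderón--Zygmund theory for the full range of $p$; alternatively the kernel is so nice that one gets $L^p$ boundedness directly.

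The main term $\nabla_1\Theta(x,y;A(y))$ must be handled by Proposition \ref{prop:Mit-Tay}. Fix the value of the matrix $A(y)$; since $\Theta(x,y;E) = \Theta(x-y,0;E)$ depends on $x,y$ only through $x-y$ and on the constant matrix $E$, the kernel $\nabla_1\Theta(x,y;A(y))$ has the form $b(x-y,y)$ where $b(z,y) := \nabla_z\Theta(z,0;A(y))$. For each fixed symmetric constant elliptic matrix $E=A(y)$ the fundamental solution $\Theta(z,0;E)$ is, up to a normalizing constant, of the form $c_E\,\langle E^{-1}z,z\rangle^{(1-n)/2}$ (for $n+1\ge 3$; the planar case $n+1=2$ gives a logarithm), hence $\nabla_z\Theta(z,0;E)$ is homogeneous of degree $-n$ in $z$ and odd in $z$; moreover it is $C^\infty$ in $z\ne 0$ with all spherical derivatives bounded, and — crucially — by the ellipticity bounds \eqref{eqelliptic1}, \eqref{eqelliptic2} these bounds are uniform over the compact set of admissible matrices $\{A(y):y\in\R^{n+1}\}$, so $\partial_z^\beta b(z,y)$ is continuous and bounded on $\mathbb{S}^n\times\R^{n+1}$ for all $|\beta|\le M$. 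Thus $b$ satisfies the hypotheses of Proposition \ref{prop:Mit-Tay}, and $T_\mu^{\mathrm{main}}f(x) = \int b(x-y,y)f(y)\,d\mu(y)$ is bounded on $L^p(\mu)$ for $1<p<\infty$. Adding the two pieces gives the theorem.

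The step I expect to be the main obstacle is verifying that $b(z,y) = \nabla_z\Theta(z,0;A(y))$ really fits the precise hypotheses of Proposition \ref{prop:Mit-Tay} with bounds \emph{uniform in $y$} — i.e. that differentiating the explicit formula $\langle E^{-1}z,z\rangle^{(1-n)/2}$ in $z$ produces bounds on $\mathbb{S}^n$ controlled only by $\Lambda$, which requires controlling $E^{-1}$ away from degeneracy uniformly, and handling the low-dimensional case $n+1=2$ separately. A secondary technical point is making rigorous the claim that the error kernel $\nabla_1\mathcal{E}_A(x,y)-\nabla_1\Theta(x,y;A(y))$ has the extra Hölder smoothness in $y$ (not just the size bound stated in Lemma \ref{ENV.constantCoef}(3)); this follows from the interior Schauder estimates already cited for Lemma \ref{lemcz}, applied to the difference, which is $L_A$-harmonic in the relevant region up to a controlled forcing term, but the bookkeeping must be done with some care. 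Everything else is routine Calderón--Zygmund theory on the AD-regular space $(\supp\mu,\mu)$.
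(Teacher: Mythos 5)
Your proposal is correct and follows essentially the same route as the paper: freeze the coefficients so that the main term falls under Proposition \ref{prop:Mit-Tay}, and absorb the remainder, whose kernel is $O(|x-y|^{\alpha-n})$ on the (bounded) support of $\mu$, by a trivial Schur-type bound --- no H\"older smoothness of the remainder in $y$ is needed, so your ``secondary technical point'' can simply be dropped. The one small wrinkle is that you freeze at $A(y)$, giving a kernel of the form $b(x-y,y)$, whereas Proposition \ref{prop:Mit-Tay} as stated treats $b(x-y,x)$; the paper freezes at $A(x)$ via Lemma \ref{ENV.constantCoef}(2) to match the proposition literally, while your variant requires an extra (harmless, since the full range $1<p<\infty$ is available) duality step or a switch to the $A(x)$ freezing.
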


\begin{proof}
Let $\wt K(z,w):= \nabla_1\Theta(z,0;A(w))$, i.e., the gradient of the fundamental solution associated with the constant coefficients matrix $A(w)$ with pole at the origin. Then, from the properties of $\Theta(\cdot,0;A(w))$, we have that $\wt K(z,w)$ is odd in $z$, homogeneous of degree $-n$ in $z$ and satisfies \eqref{eq:djkernel} for each fixed $w$.  We may write
$$ \nabla_1\Theta(x,y;A(x)) = \nabla_1\Theta(x-y,0;A(x))=\wt K(x-y,x)$$
and similarly,
$$ \nabla_2\Theta(x,y;A(y)) = \nabla_1\Theta(y-x,0;A(y))=\wt K(y-x,y).$$
In light of Proposition \ref{prop:Mit-Tay}, it follows that the operator
$$\wt T_\mu f(x) := \int \wt K(x-y,x)\,f(y)\,d\mu(y)$$
is bounded in $L^p(\mu)$.
Thus, to prove the theorem it suffices to show that the operator 
$$ Q_\mu f(x)=\int [ \nabla_1 \EE_{A}(x, y) - \nabla_1 \Theta(x,y; A(x)) ] f(y) \,d\mu(y)=: \int q(x,y) f(y) \,d\mu(y)$$
is bounded in $L^p(\mu)$.  But this immediately follows from Lemma \ref{ENV.constantCoef}, since the kernel of $Q_\mu$ satisfies the bound
$$|q(x,y)| \lesssim_\alpha |x-y|^{\alpha -n},$$
which, in turn, implies that $Q_\mu$ is compact in $L^p(\mu)$ and thus bounded.
\end{proof}

\vv

%%%%%%%%%%%%%%%%%%%%%%%%%%%%%%%%%%%%%%%%%%%%%%%%% SECTION 1 %%%%%%%%%%%%%%%%%%%%%%%%%%%%%%%%%%%%%%%%%%%%%%%%%%%%%%%%%%%%%%%%%%%%

\section{The dyadic lattice of David and Mattila}\label{sec:DMlatt}

In this section, for a given measure $\mu$ in $\R^d$, we construct the David-Mattila cubes associated to $\mu$ (see \cite{david-mattila}), that is, we consider a sequence $\DD=\{\DD_{k}\}_{k \geq k_0}$ of nested partitions of $\supp(\mu)$ |whose elements we shall call cubes| with some remarkable properties which we summarize in the next lemma.

\begin{lemma}[Theorem 3.2 in \cite{david-mattila}]
\label{lemcubs}
Let $\mu$ be a compactly supported Radon measure in $\R^{d}$.
Consider two constants $C_0>1$ and $A_0>5000\,C_0$. 
Then there exists a sequence $\DD= \cup_{k \geq k_0} \DD_k$ of partitions of $\supp\mu$ into
Borel subsets $Q$ with the following properties:
\begin{itemize}
\item For each integer $k\geq k_0$, $\supp\mu$ is the disjoint union of the ``cubes'' $Q$, $Q\in\DD_{k}$, and
if $k<l$, $Q\in\DD_{k}$, and $R\in\DD_{l}$, then either $Q\cap R=\varnothing$ or else $Q\subset R$.
\vspace{2mm}

\item The general position of the cubes $Q$ can be described as follows. For each $k\geq k_0$ and each cube $Q\in\DD_{k}$, there is a ball $B(Q)=B(x_Q,r(Q))$ such that
$$x_Q\in \supp\mu, \qquad A_0^{-k}\leq r(Q)\leq C_0\,A_0^{-k},$$
$$\supp\mu\cap B(Q)\subset Q\subset \supp\mu\cap 28B(Q)=\supp\mu \cap B(x_Q,28r(Q)),$$
and
$$\mbox{the balls\, $5B(Q)$, $Q\in\DD_{k}$, are disjoint.}$$
The balls $\frac12B(Q)$ and $\frac12B(Q')$ associated with different cubes $Q$ and $Q'$ are disjoint unless $Q \subset Q'$ or $Q' \subset Q$.
\vspace{2mm}
\item The cubes $Q\in\DD_{k}$ have small boundaries. That is, for each $Q\in\DD_{k}$ and each
integer $l\geq0$, set
$$N_l^{ext}(Q)= \{x\in \supp\mu\setminus Q:\,\dist(x,Q)< A_0^{-k-l}\},$$
$$N_l^{int}(Q)= \{x\in Q:\,\dist(x,\supp\mu\setminus Q)< A_0^{-k-l}\},$$
and
$$N_l(Q)= N_l^{ext}(Q) \cup N_l^{int}(Q).$$
Then
\begin{equation}\label{eqsmb2}
\mu(N_l(Q))\leq (C^{-1}C_0^{-3d-1}A_0)^{-l}\,\mu(90B(Q)).
\end{equation}
\vspace{2mm}

\item Denote by $\DD_{k}^{db}$ the family of cubes $Q\in\DD_{k}$ for which
\begin{equation}\label{eqdob22}
\mu(100B(Q))\leq C_0\,\mu(B(Q)).
\end{equation}
We have that $r(Q)=A_0^{-k}$ when $Q\in\DD_{k}\setminus \DD_{k}^{db}$
and
\begin{equation}\label{eqdob23}
\mu(100B(Q))\leq C_0^{-l}\,\mu(100^{l+1}B(Q))\quad
\mbox{for all $l\geq1$ with $100^l\leq C_0$ and $Q\in\DD_{k}\setminus \DD_{k}^{db}$.}
\end{equation}
\end{itemize}
\end{lemma}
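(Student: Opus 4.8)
This statement is Theorem~3.2 of \cite{david-mattila}, so I will only outline how its proof goes; it is a refinement of the dyadic cube constructions of Christ and of David, engineered so as to apply to an \emph{arbitrary} Radon measure, with no doubling hypothesis. The plan has four stages: (i) build, one generation at a time, a nested family of ``cubes'' out of maximal nets in $\supp\mu$; (ii) attach to each cube a well-placed ball $B(Q)$; (iii) secure the small boundary estimate \eqref{eqsmb2} by a pigeonhole choice of the radii $r(Q)$ inside the admissible range $[A_0^{-k},C_0A_0^{-k}]$; and (iv) isolate the doubling cubes $\DD_k^{db}$ and derive \eqref{eqdob23} for the remaining ones.

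For (i) and (ii): choose $k_0$ with $A_0^{-k_0}\approx\diam(\supp\mu)$, and for each $k\geq k_0$ fix a maximal $A_0^{-k}$-separated subset $\{x_Q\}$ of $\supp\mu$, so that the balls $B(x_Q,\tfrac12A_0^{-k})$ are disjoint while the balls $B(x_Q,A_0^{-k})$ cover $\supp\mu$; these are the centers of the level-$k$ cubes. One makes the partitions nested by attaching each level-$k$ center to a nearby level-$(k-1)$ center and letting $Q$ be the set of points of $\supp\mu$ whose ascending chain of (suitably chosen) nearest centers passes through $x_Q$; since $A_0$ is large compared with $C_0$, this can be arranged so that $\supp\mu\cap B(x_Q,cA_0^{-k})\subset Q\subset\supp\mu\cap B(x_Q,CA_0^{-k})$, which is precisely the kind of inclusion required of $B(Q)=B(x_Q,r(Q))$. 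The disjointness of the balls $5B(Q)$ at a fixed level, and of the $\tfrac12B(Q)$ for non-comparable cubes, then follows from the separation of the centers.

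The crux is (iii), the small boundary property, and this is where the construction genuinely departs from Christ's: for a non-doubling measure a thin spherical shell can carry most of the mass, so the cube boundaries cannot be taken at will. The remedy is to exploit the freedom in $r(Q)$: along a geometric sequence of $\sim\log A_0$ admissible radii one selects, by pigeonholing against the finite number $\mu(90B(Q))$, a radius for which the collar $\{x:\dist(x,\partial Q)<A_0^{-k-1}\}$ carries only a small fraction of $\mu(90B(Q))$. Since $\partial Q$ lies in a controlled union of the spheres of the defining balls of the cubes at scales $\geq k$ that compete with $Q$, keeping all those collars thin and iterating the gain across the relevant scales yields the factor $(C^{-1}C_0^{-3d-1}A_0)^{-l}$ in \eqref{eqsmb2}, which improves geometrically in $l$ once $A_0$ is taken large enough relative to $C_0$ and the dimension. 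I expect this pigeonhole-and-bookkeeping step to be the main obstacle; the rest is comparatively soft.

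Finally, for (iv): put $Q\in\DD_k^{db}$ when \eqref{eqdob22} holds, and arrange the choice of the defining balls so that $r(Q)$ is taken as large as possible, along a geometric sequence of ratio $100$ inside $[A_0^{-k},C_0A_0^{-k}]$, subject to $B(Q)$ being doubling in the sense of \eqref{eqdob22}, setting $r(Q)=A_0^{-k}$ when no admissible dilate qualifies. Then $r(Q)=A_0^{-k}$ for every $Q\in\DD_k\setminus\DD_k^{db}$, and for such a $Q$ each intermediate dilate $100^jB(Q)$ with $100^j\leq C_0$ was rejected exactly because it failed the doubling test, i.e.\ $\mu(100^{j+1}B(Q))>C_0\,\mu(100^jB(Q))$; multiplying these inequalities over $1\leq j\leq l$ gives \eqref{eqdob23}. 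Collecting the four items yields the lemma.
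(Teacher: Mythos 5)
The paper does not prove this lemma at all: it is imported verbatim (as Theorem 3.2 of \cite{david-mattila}) and used as a black box, so there is no internal argument to compare yours against. Your outline is consistent with the strategy of the cited construction --- maximal nets made nested, radii chosen by pigeonholing within $[A_0^{-k},C_0A_0^{-k}]$ to secure the small-boundary estimate, and the chain of failed doubling tests yielding \eqref{eqdob23} --- which is exactly where the paper (implicitly) places the burden of proof.
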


\vspace{2mm}

Observe that the families $\DD_{k}$ are only defined for $k\geq k_0$, and so the diameters of the cubes from $\DD$ are uniformly
bounded from above.  Further, we assume that $Q_{k_0}\equiv\supp\mu$.
Given $Q\in\DD_{k}$, we denote $J(Q)=k$, and we set
$\ell(Q)= 56\,C_0\,A_0^{-k}$ and we call it the side length of $Q$. Notice that 
$$C_0^{-1}\ell(Q)\leq \diam(28B(Q))\leq\ell(Q).$$
Observe that $r(Q)\approx\diam(Q)\approx\ell(Q)$.
Also we call $x_Q$ the center of $Q$, and the cube $Q'\in \DD_{k-1}$ such that $Q'\supset Q$ the parent of $Q$.
 We set
$B_Q=28 B(Q)=B(x_Q,28\,r(Q))$, so that 
$$\supp\mu\cap \tfrac1{28}B_Q\subset Q\subset B_Q.$$

We assume $A_0$ big enough so that the constant $C^{-1}C_0^{-3d-1}A_0$ in 
\eqref{eqsmb2} satisfies 
$$C^{-1}C_0^{-3d-1}A_0>A_0^{1/2}>10.$$
Then we deduce that, for all $0<\lambda\leq1$,
\begin{align}\label{eqfk490}\nonumber
\mu\bigl(\{x\in Q:\dist(x,\supp\mu\setminus Q)\leq \lambda\,\ell(Q)\}\bigr) + 
\mu\bigl(\bigl\{x\in 3.5B_Q\setminus Q:&\dist(x,Q)\leq \lambda\,\ell(Q)\}\bigr)\\
&\quad\,\leq
c\,\lambda^{1/2}\,\mu(3.5B_Q).
\end{align}

We denote
$\DD^{db}=\bigcup_{k\geq k_0}\DD_{k}^{db}$.
Note that, in particular, from \eqref{eqdob22} it follows that
\begin{equation}\label{eqdob*}
\mu(3.5B_{Q})\leq \mu(100B(Q))\leq C_0\,\mu(Q)\qquad\mbox{if $Q\in\DD^{db}.$}
\end{equation}
For this reason we call the cubes from $\DD^{db}$ doubling. 
Given $Q\in\DD$, we denote by $\DD(Q)$
the family of cubes from $\DD$ which are contained in $Q$. Analogously,
we write $\DD^{db}(Q) = \DD^{db}\cap\DD(Q)$. We will also use the following properties of the construction: 

\begin{lemma}[Lemma 5.28 in \cite{david-mattila}]\label{lemcobdob}
Let $Q\in\DD$. Suppose that the constants $A_0$ and $C_0$ in Lemma \ref{lemcubs} are
chosen suitably. Then there exists a family of
doubling cubes $\{Q_i\}_{i\in I}\subset \DD^{db}$, with
$Q_i\subset Q$ for all $i$, such that their union covers $\mu$-almost all $Q$.
\end{lemma}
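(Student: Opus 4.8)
The plan is to run a stopping-time argument showing that $\mu$-almost every point of $Q$ lies in some cube of $\DD^{db}(Q)$; then $\{Q_i\}_{i\in I}$ will be taken to be the \emph{maximal} cubes of $\DD^{db}(Q)$, which are pairwise disjoint by the nesting property in Lemma~\ref{lemcubs}. If $Q\in\DD^{db}$ there is nothing to prove, so I assume $Q\notin\DD^{db}$. For $x\in Q$ and $k\geq J(Q)$ let $Q_k(x)$ be the cube of $\DD_k$ containing $x$, and set
$$Z=\{x\in Q:\ Q_k(x)\notin\DD^{db}\ \text{for all }k\geq J(Q)\}.$$
Every $x\in Q\setminus Z$ lies in some cube of $\DD^{db}(Q)$, hence in the maximal such one, so the maximal cubes of $\DD^{db}(Q)$ cover $Q\setminus Z$ and the lemma reduces to proving $\mu(Z)=0$. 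This is where the choice of constants enters: I would fix $C_0$ large, then $A_0>5000\,C_0$, and let $l\geq 1$ be the largest integer with $100^{l}\leq C_0$.

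The main step is a geometric decay estimate along chains of non-doubling cubes. Fix $x\in Z$ and let $Q=R_0\supset R_1\supset\cdots\ni x$ be the chain of descendants of $Q$ containing $x$; each $R_i$ is non-doubling, hence $r(R_i)=A_0^{-(J(Q)+i)}$ by Lemma~\ref{lemcubs}. Using $x_{R_i}\in R_i\subset 28B(R_i)$ together with $A_0>5000\,C_0$, a direct computation gives $100^{l+1}B(R_i)\subset 100\,B(R_{i-1})$, and feeding this into \eqref{eqdob23} for the non-doubling cube $R_{i-1}$ yields
$$\mu\bigl(100^{l+1}B(R_i)\bigr)\ \leq\ \mu\bigl(100\,B(R_{i-1})\bigr)\ \leq\ C_0^{-l}\,\mu\bigl(100^{l+1}B(R_{i-1})\bigr).$$
Iterating down from $R_0=Q$, it follows that any cube $R$ of generation $J(Q)+N$ all of whose ancestors up to and including $Q$ are non-doubling satisfies
$$\mu(R)\ \leq\ \mu\bigl(100^{l+1}B(R)\bigr)\ \leq\ C_0^{-lN}\,\mu\bigl(100^{l+1}B(Q)\bigr).$$

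To conclude, let $\mathcal G_N$ be the family of all such cubes and put $\Sigma_N=\bigcup_{R\in\mathcal G_N}R$. One checks that $Z=\bigcap_N\Sigma_N$, and since $\mu(\Sigma_0)=\mu(Q)<\infty$ we get $\mu(Z)=\lim_{N\to\infty}\mu(\Sigma_N)$. The estimate above gives $\mu(\Sigma_N)=\sum_{R\in\mathcal G_N}\mu(R)\leq|\mathcal G_N|\,C_0^{-lN}\mu(100^{l+1}B(Q))$, while the fact that the balls $5B(R)$, $R\in\DD_{J(Q)+N}$, are disjoint, of radius $\geq 5A_0^{-(J(Q)+N)}$, and all contained in a ball of radius $\lesssim C_0\,r(Q)$, yields $|\mathcal G_N|\lesssim_{n} C_0^{2(n+1)}A_0^{N(n+1)}$ by a volume count. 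Hence
$$\mu(\Sigma_N)\ \lesssim_{n}\ C_0^{2(n+1)}\bigl(A_0^{n+1}C_0^{-l}\bigr)^{N}\,\mu\bigl(100^{l+1}B(Q)\bigr),$$
which tends to $0$ as $N\to\infty$ as soon as $A_0^{n+1}C_0^{-l}<1$. This last inequality is the only genuine obstacle, and it is exactly what ``choosing $A_0$ and $C_0$ suitably'' means: since $l\approx\log_{100}C_0$ while $A_0$ is only a fixed multiple of $C_0$, taking $C_0$ to be a large enough power of $100$ (depending only on $n$) forces $C_0^{l}$ to dominate $A_0^{n+1}$; then $\mu(Z)=0$, which proves the lemma.
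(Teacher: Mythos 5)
The paper does not prove this lemma at all: it is quoted verbatim as Lemma 5.28 of David--Mattila, so there is no in-paper argument to compare against. Your reconstruction from the properties listed in Lemma \ref{lemcubs} is essentially correct and is, in substance, the standard stopping-time/volume-counting proof. Two small points deserve attention. First, the inclusion $100^{l+1}B(R_i)\subset 100B(R_{i-1})$ uses $r(R_i)=A_0^{-(J(Q)+i)}$, i.e.\ that $R_i$ itself is non-doubling; for a cube $R\in\mathcal G_N$ whose ancestors (but not necessarily $R$ itself) are non-doubling, one only has $r(R)\leq C_0A_0^{-(J(Q)+N)}$ and the same inclusion would force $C_0\lesssim 1$. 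This is harmless: for the last step use instead $R\subset R_{N-1}\subset 28B(R_{N-1})\subset 100B(R_{N-1})$ and then apply \eqref{eqdob23} once more to $R_{N-1}$, which yields the same bound $\mu(R)\leq C_0^{-lN}\mu(100^{l+1}B(Q))$. Second, your closing remark that ``$A_0$ is only a fixed multiple of $C_0$'' is not what is actually assumed later in the paper (the small-boundary normalization forces $A_0\gtrsim C_0^{6d+2}$), but your conclusion is robust: since $l\approx\log_{100}C_0$, the quantity $C_0^{l}$ grows superpolynomially in $C_0$, so it dominates $A_0^{n+1}$ whenever $A_0$ is bounded by a fixed power of $C_0$, which is the regime in which the lattice is built. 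Finally, note that Lemma \ref{lemcad22}, which the paper also quotes, already packages your decay estimate in the stronger form $\mu(100B(R))\leq A_0^{-10d(J(R)-J(Q)-1)}\mu(100B(Q))$; combining it directly with your $A_0^{dN}$ counting bound gives the conclusion with no further tuning of constants, so you could have shortened the argument by invoking it.
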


\begin{lemma}[Lemma 5.31 in \cite{david-mattila}]\label{lemcad22}
Let $Q\in\DD$ and let $R\subset Q$ be a cube such that all the intermediate cubes $S$,
$R\subsetneq S\subsetneq Q$ are non-doubling (i.e.\ belong to $\DD\setminus \DD^{db}$).
Then%
\begin{equation}\label{eqdk88}
\mu(100B(R))\leq A_0^{-10d(J(R)-J(Q)-1)}\mu(100B(Q)).
\end{equation}
\end{lemma}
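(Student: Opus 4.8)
The plan is to telescope the non-doubling condition along the chain of intermediate cubes between $R$ and $Q$. Write $J(R)=k$ and $J(Q)=m$, so $k>m$, and let $Q=S_m\supsetneq S_{m+1}\supsetneq\cdots\supsetneq S_k=R$ be the unique chain of cubes from $\DD$ with $S_j\in\DD_j$ and $S_{j+1}\subset S_j$; this chain exists and is nested by the first bullet of Lemma \ref{lemcubs}. By hypothesis every $S_j$ with $m<j<k$ belongs to $\DD\setminus\DD^{db}$, i.e.\ is non-doubling. The main point is that for a non-doubling cube $S\in\DD_j\setminus\DD_j^{db}$ we have the exact radius $r(S)=A_0^{-j}$ (fourth bullet of Lemma \ref{lemcubs}), so that the balls $100B(S_j)$ shrink geometrically in $j$, and one can relate the $\mu$-measure of $100B(S_{j+1})$ to that of $100B(S_j)$ using the small-boundary estimate \eqref{eqsmb2}.

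First I would show that, for two consecutive cubes $S\supset S'$ in the chain with $S$ non-doubling, one has $100B(S')\subset N_l(S)\cup(\text{small ball})$ for an appropriate $l$ comparable to $1$, or more directly: since $r(S)=A_0^{-J(S)}$ and $r(S')\leq C_0 A_0^{-J(S')}=C_0A_0^{-J(S)-1}$, and $S'\subset S\subset 28B(S)$, the ball $100B(S')$ is contained in a neighborhood of $\supp\mu$ of width $\lesssim C_0 A_0^{-J(S)-1}$ around $S'$; hence $100B(S')\cap\supp\mu$ is contained in $S'$ together with $N^{ext}_{l}(S')\cup N^{int}_l(S')$-type sets, which are controlled. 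Actually the cleanest route is: because $5B(S')$ for the cubes $S'\in\DD_{J(S)+1}$ are disjoint and each has radius $\geq A_0^{-J(S)-1}$, and $100B(S')$ has radius $\lesssim C_0 A_0^{-J(S)-1}\ll A_0^{-J(S)}$ once $A_0$ is large, we get $100B(S')\subset \tfrac12 B(S)$, say. Then since $S$ is non-doubling, \eqref{eqdob23} with a suitable $l$ gives a gain; but the strongest gain comes from feeding the geometric decay of the radii into the small-boundary bound \eqref{eqsmb2}: choosing $l$ so that $A_0^{-J(S)-l}\approx r(S')$, i.e.\ $l\approx 1$ up to additive constants, and noting $100B(S')$ lies within distance $\lesssim r(S')$ of $S'\subset S$, one bounds $\mu(100B(S'))$ by $\mu(N_{l}(\text{parent-type cube}))\lesssim (C^{-1}C_0^{-3d-1}A_0)^{-l}\mu(90B(S))$. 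Iterating over the $J(R)-J(Q)-1$ intermediate steps produces the factor $A_0^{-10d(J(R)-J(Q)-1)}$, after absorbing the constants $C,C_0,d$ into the requirement (already imposed in the excerpt) that $C^{-1}C_0^{-3d-1}A_0>A_0^{1/2}$, possibly strengthened to $>A_0^{10d}$ by taking $A_0$ large.

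I expect the main obstacle to be the bookkeeping that converts the small-boundary inequality \eqref{eqsmb2} — which is phrased for $N_l(Q)$ relative to a fixed cube $Q$ and the enlarged ball $90B(Q)$ — into a clean recursion $\mu(100B(S_{j+1}))\lesssim A_0^{-10d}\,\mu(100B(S_j))$ with the ball $100B(\cdot)$ on both sides and no lost enlargement factors accumulating over the $\sim(k-m)$ steps. The point to be careful about is that passing from $90B(S_j)$ or $100^{l+1}B(S_j)$ back down to $100B(S_j)$ at each stage must not cost a constant per step; this is handled by the non-doubling hypothesis on the intermediate cubes together with \eqref{eqdob23}, which says precisely that for non-doubling $S_j$ the measure does not concentrate near $S_j$ under modest dilations, so all the relevant enlargements of $B(S_j)$ have comparable $\mu$-measure up to absolute constants, and those constants are then dominated by the geometric factor $A_0^{-10d}$ gained at each step once $A_0$ is chosen large enough depending only on $d$. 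A full proof is in \cite[Lemma 5.31]{david-mattila}, and I would simply verify that the constants there are consistent with the normalizations fixed above.
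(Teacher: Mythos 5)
The paper does not prove this lemma (it is quoted verbatim from David--Mattila), but your proposed mechanism for the per-step gain is not the right one and would not work. The geometric factor cannot come from the small-boundary estimate \eqref{eqsmb2}: $N_l(S)$ consists only of points within distance $A_0^{-J(S)-l}$ of the boundary of $S$, whereas the child $S'$ (hence the ball $100B(S')$) may sit deep in the interior of $S$, far from $\supp\mu\setminus S$, so the inclusion $100B(S')\cap\supp\mu\subset N_l(S)\cup(\cdots)$ that your recursion needs is simply false in general. Two further signs that the mechanism is off: (i) you assert that for non-doubling $S_j$ ``all the relevant enlargements of $B(S_j)$ have comparable $\mu$-measure'' --- this is backwards, since non-doubling means $\mu(100B(S_j))>C_0\,\mu(B(S_j))$, i.e.\ the measure concentrates \emph{away} from $B(S_j)$, and \eqref{eqdob23} quantifies exactly that the enlargements are \emph{not} comparable; (ii) the condition $C^{-1}C_0^{-3d-1}A_0>A_0^{1/2}$ cannot be ``strengthened to $>A_0^{10d}$ by taking $A_0$ large'', because the left side grows linearly in $A_0$ while the right side grows like $A_0^{10d}$ with $10d>1$.

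The correct route is to telescope using \eqref{eqdob23} alone. Let $Q=S_m\supsetneq S_{m+1}\supsetneq\cdots\supsetneq S_k=R$ be the chain, with $S_j$ non-doubling for $m<j<k$, so $r(S_j)=A_0^{-j}$ exactly. Take $l$ maximal with $100^l\leq C_0$; then $100^{l+1}r(S_j)\leq 100\,C_0\,A_0^{-j}\leq r(S_{j-1})$ since $A_0>5000C_0$, and as $x_{S_j}\in S_{j-1}\subset 28B(S_{j-1})$ this yields $100^{l+1}B(S_j)\subset 29B(S_{j-1})\subset 100B(S_{j-1})$. Hence \eqref{eqdob23} gives the clean recursion $\mu(100B(S_j))\leq C_0^{-l}\,\mu(100B(S_{j-1}))$ for each intermediate $j$, and the same containment (with no gain) handles the final step from $R$ to $S_{k-1}$. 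Iterating produces $\mu(100B(R))\leq C_0^{-l(k-m-1)}\mu(100B(Q))$, and the stated bound follows from $C_0^{l}\geq A_0^{10d}$, which is where the ``suitable choice'' of the constants enters: since $l\approx\log C_0/\log 100$, one needs $C_0$ large depending on $d$ and $A_0$ at most a fixed power of $C_0$ --- a genuine constraint tying $A_0$ to $C_0$, not something obtained by letting $A_0\to\infty$.
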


Given a ball  $B\subset \R^{d}$ and a fixed $1\leq n\leq d$, we consider its $n$-dimensional density:
$$\Theta_\mu(B)= \frac{\mu(B)}{\diam(B)^n}.$$
For a cube $Q \in \DD$, we also set
$$
\Theta_\mu(Q)=\frac{\mu(Q)}{\ell(Q)^n}.
$$

From the preceding lemma we deduce:

\vspace{2mm}
\begin{lemma}\label{lemcad23}
Let $1\leq n\leq d$ and 
let $Q,R\in\DD$ be as in Lemma \ref{lemcad22}.
Then
$$\Theta_\mu(100B(R))\leq (C_0A_0)^d\,A_0^{-9d(J(R)-J(Q)-1)}\,\Theta_\mu(100B(Q)).$$
%and
%$$\sum_{S\in\DD_\mu:R\subset S\subset Q}\Theta_\mu(100B(S))\leq c\,\Theta_\mu(100B(Q)),$$
%with $c$ depending on $C_0$ and $A_0$.
\end{lemma}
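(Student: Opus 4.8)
The plan is to combine Lemma~\ref{lemcad22} with the diameter/side-length comparisons recorded in Section~\ref{sec:DMlatt}. Recall that for any cube $S\in\DD$ we have $\diam(100B(S))\approx r(S)\approx\ell(S)\approx A_0^{-J(S)}$, with all comparison constants controlled by $C_0$; more precisely, since $A_0^{-J(S)}\leq r(S)\leq C_0 A_0^{-J(S)}$, we get $\ell(S)=56C_0A_0^{-J(S)}$ and $\diam(100B(S))=200\,r(S)\in[200A_0^{-J(S)},\,200C_0A_0^{-J(S)}]$. The only subtlety is bookkeeping the powers of $C_0A_0$ that appear when passing between $\mu(100B(\cdot))$ and $\Theta_\mu(100B(\cdot))$.

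First I would write, using the definition of the density and the lower bound $\diam(100B(Q))\geq 200 A_0^{-J(Q)}$ together with the upper bound $\diam(100B(R))\leq 200C_0A_0^{-J(R)}$,
\[
\Theta_\mu(100B(R)) = \frac{\mu(100B(R))}{\diam(100B(R))^n}
\leq \frac{\mu(100B(R))}{(200A_0^{-J(R)})^n}
\]
and
\[
\Theta_\mu(100B(Q)) = \frac{\mu(100B(Q))}{\diam(100B(Q))^n}
\geq \frac{\mu(100B(Q))}{(200C_0A_0^{-J(Q)})^n}.
\]
Next I would feed in the mass estimate \eqref{eqdk88} from Lemma~\ref{lemcad22}, namely $\mu(100B(R))\leq A_0^{-10d(J(R)-J(Q)-1)}\mu(100B(Q))$, to obtain
\[
\Theta_\mu(100B(R)) \leq A_0^{-10d(J(R)-J(Q)-1)}\,\frac{\mu(100B(Q))}{(200A_0^{-J(R)})^n}.
\]
Writing $\mu(100B(Q))\leq (200C_0A_0^{-J(Q)})^n\,\Theta_\mu(100B(Q))$ and collecting the powers of $A_0$ gives a factor $A_0^{n(J(R)-J(Q))}$ from the ratio of the diameters to the power $n$, and a factor $C_0^n$; absorbing $n\leq d$ this is at most $(C_0A_0)^d\,A_0^{d(J(R)-J(Q))}$ (one may even be generous and bound $A_0^{n(J(R)-J(Q))}\le A_0^{d(J(R)-J(Q))}$ since $J(R)\ge J(Q)$).

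Finally I would combine the two exponential factors: $A_0^{-10d(J(R)-J(Q)-1)}\cdot A_0^{d(J(R)-J(Q))} = A_0^{10d}\cdot A_0^{-9d(J(R)-J(Q))-d}\le A_0^{-9d(J(R)-J(Q)-1)}$ after a harmless adjustment of constants, yielding exactly
\[
\Theta_\mu(100B(R)) \leq (C_0A_0)^d\,A_0^{-9d(J(R)-J(Q)-1)}\,\Theta_\mu(100B(Q)),
\]
as claimed. There is no real obstacle here; the only thing to be careful about is making sure the direction of each inequality (upper bound on $\diam(100B(R))$, lower bound on $\diam(100B(Q))$) is chosen so that the factor $A_0^{n(J(R)-J(Q))}$ with the \emph{nonnegative} exponent $J(R)-J(Q)$ is the one that gets absorbed into the gain $A_0^{-10d(J(R)-J(Q)-1)}$, leaving net decay of order at least $A_0^{-9d(J(R)-J(Q)-1)}$.
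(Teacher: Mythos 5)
Your approach is the intended one: the paper states this lemma as an immediate consequence of Lemma \ref{lemcad22} together with $\diam(100B(S))=200\,r(S)\in[200A_0^{-J(S)},\,200C_0A_0^{-J(S)}]$, and your choice of which bound (upper versus lower) to use on each diameter is the right one. However, the final exponent bookkeeping contains a slip. Writing $m:=J(R)-J(Q)\ge 0$, the sharp form of your intermediate estimate is
\[
\Theta_\mu(100B(R))\;\leq\; C_0^{\,n}\,A_0^{\,nm}\,A_0^{-10d(m-1)}\,\Theta_\mu(100B(Q)),
\]
and since $n\le d$ and $m\ge 0$,
\[
C_0^{\,n}\,A_0^{\,nm}\,A_0^{-10d(m-1)}\;\leq\; C_0^{\,d}\,A_0^{\,dm-10d(m-1)}\;=\;C_0^{\,d}\,A_0^{\,d}\,A_0^{-9d(m-1)}\;=\;(C_0A_0)^d\,A_0^{-9d(m-1)},
\]
which is exactly the claim. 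In your write-up you instead absorbed the factor $A_0^{d}$ into the prefactor prematurely (bounding $C_0^{\,n}A_0^{\,nm}$ by $(C_0A_0)^dA_0^{\,dm}$) and then needed that same factor again at the end: the identity $A_0^{-10d(m-1)}\,A_0^{\,dm}=A_0^{10d}\,A_0^{-9dm-d}$ is false, since the left-hand side equals $A_0^{-9dm+10d}$ while the right-hand side equals $A_0^{-9dm+9d}$; consequently the concluding inequality $\le A_0^{-9d(m-1)}$ as written fails by exactly the factor $A_0^{d}$ that sits unused in your prefactor. This is a one-line repair of the arithmetic, not a conceptual gap, and with it your argument coincides with the paper's (implicit) proof.
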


%For the easy proof, see \cite[Lemma 4.4]{Tolsa-memo}, for example.

%%%%%%%%%%%%%%%%%%%%%%%%%%%%%%%%%%%%%%%%%%%%%%%%% SECTION 1 %%%%%%%%%%%%%%%%%%%%%%%%%%%%%%%%%%%%%%%%%%%%%%%%%%%%%%%%%%%%%%%%%%%%
\section{Initial reductions via a martingale difference decomposition}

We assume that $\mu$ is a compactly supported non-zero measure in $\mathbb R^{n+1}$ 
such that 
$$
\Theta^{n,*}(x,\mu)>0
\quad
\textup{and}
\quad
\Theta_*^n(x,\mu)=0
$$
for $\mu$-a.e.\ $x\in\R^{n+1}$.
 To show that $T_\mu$ is not bounded in $L^2(\mu)$, 
 by replacing $\mu$ by its restriction to a suitable subset with positive $\mu$-measure, 
we may assume that there
exists some constant $\tau_0>0$ such that $\Theta^{n,*}(x,\mu)>\tau_0$ for $\mu$-a.e.\ $x$ and also that $\mu$ has
$n$-polynomial growth with constant $c_0$. 

From now on we also assume that the last two conditions hold. In what follows we allow constants, explicit or implicit in the relations $\approx$ and
$\lesssim$, to depend on the parameters of the David-Mattila lattice $C_0,A_0$ and on the polynomial growth constant $c_0$. We first need a technical result:

\begin{lemma}\label{lemdens}
The following hold:
\begin{itemize}
\item[(a)] For $\mu$-a.e.\ $x\in\R^{n+1}$ there exists a sequence of cubes $Q_k\in\DD^{db}$ such that $x\in Q_k$, $\ell(Q_k)\to0$,
and $\Theta_\mu(Q_k)>c\,\tau_0$, with $c$ depending on $n$ and the parameters of the David-Mattila lattice.

\item[(b)] Let $A>1$ and $0<\delta<1$.
For $\mu$-a.e.\ $x\in\R^{n+1}$ there exists a sequence of cubes $Q_k\in\DD^{db}$ such that $x\in Q_k$, $\ell(Q_k)\to0$,
and $\Theta_\mu(AB_{Q_k})\leq \delta$.
\end{itemize}
\end{lemma}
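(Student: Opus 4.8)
The plan is to reduce both statements to facts about balls and then pass from a well-chosen ball to a David--Mattila cube of comparable side length; the delicate point in each case is to make that cube doubling without spoiling the density estimate.

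\textbf{Part (a).} After the reduction we know $\Theta^{n,*}(x,\mu)>\tau_0$ for $\mu$-a.e.\ $x$, so for $\mu$-a.e.\ $x$ there are arbitrarily small $r>0$ with $\mu(B(x,r))>2^n\tau_0\,r^n$. Given such an $r$, let $Q\in\DD_k$ be the cube containing $x$ with $k$ the largest integer satisfying $A_0^{-k}\geq r$; then $r(Q)\geq r$, $\ell(Q)\approx r$, and $B(x,r)\subset 100B(Q)$. Let $\wh{Q}$ be the smallest cube of $\DD^{db}$ containing $Q$ (possibly $\wh{Q}=Q$); for $\mu$-a.e.\ $x$ this exists once $\ell(Q)$ is small enough, since by Lemma~\ref{lemcobdob} applied to $Q_{k_0}$ almost every $x$ lies in some doubling cube. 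Every cube strictly between $Q$ and $\wh{Q}$ is non-doubling, so Lemma~\ref{lemcad23} gives
\[
\Theta_\mu(100B(Q))\leq (C_0A_0)^{n+1}A_0^{-9(n+1)\,(J(Q)-J(\wh{Q})-1)}\,\Theta_\mu(100B(\wh{Q})).
\]
Now $B(x,r)\subset 100B(Q)$ forces $\Theta_\mu(100B(Q))\gtrsim\tau_0$, while the $n$-polynomial growth of $\mu$ yields $\Theta_\mu(100B(\wh{Q}))\leq c_0/2^n$; hence $J(Q)-J(\wh{Q})$ is at most a constant depending on $n,C_0,A_0,c_0,\tau_0$, so $\ell(\wh{Q})\approx r$. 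Since $A_0$ is large, the defining ball of the non-doubling cube $Q$ is tiny inside that of $\wh{Q}$, so $100B(Q)\subset 100B(\wh{Q})$, and using $\supp\mu\cap B(\wh{Q})\subset\wh{Q}$ together with the doubling of $\wh{Q}$,
\[
\mu(\wh{Q})\geq\mu(B(\wh{Q}))\geq C_0^{-1}\mu(100B(\wh{Q}))\geq C_0^{-1}\mu(B(x,r))>C_0^{-1}2^n\tau_0\,r^n\gtrsim\tau_0\,\ell(\wh{Q})^n.
\]
Thus $\Theta_\mu(\wh{Q})\gtrsim\tau_0$; letting $r\to0$ produces the required sequence. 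The main obstacle here is the bound on $J(Q)-J(\wh{Q})$, which is exactly what Lemma~\ref{lemcad23} combined with polynomial growth provides.

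\textbf{Part (b).} After the reduction $\Theta_*^n(x,\mu)=0$ for $\mu$-a.e.\ $x$, so for $\mu$-a.e.\ $x$ and any $\eta>0$ there are arbitrarily small $\rho>0$ with $\mu(B(x,\rho))<\eta\,\rho^n$; take $\eta=\delta\,(C_0A_0)^{-n}$. Given such a $\rho$, let $Q\in\DD_k$ be the cube containing $x$ with $k$ the smallest integer satisfying $A_0^{-k}\leq\rho/(56C_0A)$. Since $x\in Q\subset B_Q=B(x_Q,28r(Q))$ one checks $AB_Q\subset B(x,\rho)$ while $\diam(AB_Q)>\rho/(C_0A_0)$, whence
\[
\Theta_\mu(AB_Q)=\frac{\mu(AB_Q)}{\diam(AB_Q)^n}\leq\frac{\mu(B(x,\rho))}{(\rho/(C_0A_0))^n}<\eta\,(C_0A_0)^n=\delta,
\]
\emph{provided} $Q\in\DD^{db}$. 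The difficulty is that $Q$ at this target generation need not be doubling: one cannot enlarge it (its dilate would leave $B(x,\rho)$), and a doubling descendant of $Q$ need not have comparable size, so its normalized density $\Theta_\mu(A\,B_{\,\cdot\,})$ is not controlled. The remedy will be to guarantee that infinitely many of the low-density scales $\rho$ carry a doubling cube of the comparable generation around $x$: this should follow from the small boundary property \eqref{eqfk490} together with Lemma~\ref{lemcad22} --- a long run of non-doubling cubes through $x$ causes a rapid loss of mass, so the measure of the set of $x$ contained in such long runs at a given scale is small --- combined with a Borel--Cantelli argument over scales. I expect this matching of the low-density scales with the doubling scales to be the main obstacle in (b).
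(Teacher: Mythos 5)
Part (a) of your proposal is correct and is essentially the paper's argument: pass from a high-density ball to the cube of the matching generation, jump to the smallest doubling ancestor, and use Lemma \ref{lemcad23} together with the polynomial growth bound $\Theta_\mu\lesssim c_0$ to bound the generation gap, so that the ancestor still has side length $\approx r$ and density $\gtrsim\tau_0$. (Minor point: the existence of the smallest doubling ancestor is immediate from $Q_{k_0}\in\DD^{db}$; you do not need Lemma \ref{lemcobdob} for that.)

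Part (b) has a genuine gap. You correctly isolate the obstacle --- the cube through $x$ at the generation matching the low-density radius $\rho$ need not be doubling --- but you leave it unresolved, and the remedy you sketch (a Borel--Cantelli argument producing infinitely many low-density scales $\rho_j$ that carry a doubling cube at the matching generation) is not the right fix and I do not see how to make it work: the scales furnished by $\Theta^n_*(x,\mu)=0$ may form an arbitrarily sparse sequence, and nothing forces them to align with the (also possibly sparse) generations at which the cube containing $x$ is doubling. The correct resolution is the opposite of what you fear: your worry that ``a doubling descendant of $Q$ need not have comparable size, so its normalized density is not controlled'' is precisely what Lemma \ref{lemcad23} refutes. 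One starts from $S_0$, the largest cube through $x$ with $100B(S_0)\subset B(x,r)$, and descends to the \emph{first doubling descendant} $S_i$ containing $x$ (which exists for $\mu$-a.e.\ $x$ by Lemma \ref{lemcobdob}); since all intermediate cubes are non-doubling, Lemma \ref{lemcad23} gives $\Theta_\mu(100B(S_j))\lesssim\Theta_\mu(B(x,r))$ for every $j\le i$, i.e.\ the density at \emph{every} scale between $\ell(S_i)$ and $r$ is controlled by the density at scale $r$, because non-doubling chains lose mass faster than the scale shrinks. Hence $\Theta_\mu(AB_{S_i})\lesssim\delta$ no matter how far down $S_i$ sits: if $A\,r(B_{S_i})\lesssim r$ the dilated ball lives at one of the controlled intermediate scales, and otherwise one uses that the low-density ball should be chosen at scale $Ar$ (i.e.\ take $\Theta_\mu(B(x,Ar))\le\delta A^{-n}$, so that any ball $B\subset B(x,Ar)$ with $r(B)\ge r$ has $\Theta_\mu(B)\le A^n\Theta_\mu(B(x,Ar))\le\delta$). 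With this replacement your part (b) closes.
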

 
\begin{proof}
To prove (a), let $x\in\R^{n+1}$ be such that $\Theta^{n,*}(x,\mu)\geq\tau_0$. 
We have to show that for any given $\ell_0>0$ there exists a cube $Q\in\DD^{db}$ such that
$x\in Q$, $\ell(Q)\leq \ell_0$,
and $\Theta_\mu(Q)\gtrsim\tau_0$.
To this end, let $B(x,r)$ be a ball such that $0<r\leq c_1\,\ell_0$ and $\Theta_\mu(B(x,r))\geq \tau_0/2$,
with $c_1<1$ to be fixed in a moment. Let $R_0\in\DD$ be the smallest cube such that $B(x,r)\subset 2B_{R_0}$. Since
$\ell(R_0)\approx r$, we have 
$$\Theta_\mu(2B_{R_0})\gtrsim \Theta_\mu(B(x,r))\geq \frac12\,\tau_0.$$
Let $Q=Q(R_0)\in\DD^{db}$ be the smallest doubling cube that contains $R_0$ (such a cube exists because $Q_{k_0}\equiv\supp\mu\in
\DD^{db}$).
For $j\geq 0$, denote by $R_j$ the $j$-th ancestor of $R_0$ (i.e.\ $R_j\in\DD$ is such that $R_0\subset R_j$ and $\ell(R_j)=A_0^j\,\ell(R_0)$).
Let $i\geq 0$ be such that $Q(R_0)= R_i$.  If $i \leq 10$, then it is clear that  $\Theta_\mu(Q) = \Theta_\mu({R_i})\gtrsim \tau_0$. Assume now that $i>10$. Then since the cubes $R_1,\ldots,R_{i-1}$ do not belong to $\DD^{db}$, by Lemma \ref{lemcad23} we have
\begin{align*}
\tau_0\lesssim\Theta_\mu(2B_{R_0})\lesssim\Theta_\mu(100B(R_0))&\leq (C_0A_0)^{n+1}\,A_0^{-9(n+1)(i-1)}\,\Theta_\mu(100B(R_i))\\
&\lesssim C_0^{2(n+1)} A_0^{-8(n+1)i}\,\Theta_\mu({R_i}),
\end{align*}
where in the last inequality we used that $R_i \in \DD^{db}$.
This implies that $\Theta_\mu(Q) = \Theta_\mu({R_i})\gtrsim \tau_0$. Also, taking into account that 
$\Theta_\mu({R_i})\lesssim c_0$, we infer that 
$\tau_0\lesssim c_0\,C_0^{2(n+1)} A_0^{-8(n+1)i},$
and thus $i$ is bounded above by some constant depending on $\tau_0$, $c_0$, $A_0$ and $C_0$, which implies that
$$\ell(Q)\leq C(\tau_0,c_0, A_0, C_0)\,\ell(R_0)\leq C'(\tau_0,c_0, A_0, C_0)\,c_1\,\ell_0.$$
Hence, choosing $c_1= C'(\tau_0,c_0, A_0, C_0)^{-1}$ we are done.
\vspace{2mm}

Let us turn our attention to (b).
Let $x\in\R^{n+1}$ be such that $\Theta^{n}_*(x,\mu)=0$ 
and such that there exists a sequence of doubling cubes $P_k\in\DD^{db}$ such that $x\in P_k$, $\ell(P_k)\to0$.
 By Lemma \ref{lemcobdob}, the set of such points $x\in\R^{n+1}$ has full measure.
Let 
 $\ell_0>0$. We wish to find a cube $Q\in\DD^{db}$ such that
$x\in Q$, $\ell(Q)\leq \ell_0$,
and $\Theta_\mu(AB_Q)\leq \delta$.
To this end, let $B(x,r)$ be a ball such that $0<r\leq \ell_0$ and $\Theta_\mu(B(x,Ar))\leq \delta A^{-n}$.
Let $S_0\in\DD$ be the largest cube such that $x\in S_0$, $100B(S_0)\subset B(x,r)$,  and $r(B(S_0)) \approx r$.
For $j\geq 0$, denote by $S_j$ the $j$-th descendant of $S_0$ that contains $x$
(i.e.\ $S_j\in\DD$ is such that $x\in S_j\subset S_0$ and $\ell(S_j)=A_0^{-j}\,\ell(S_0)$).
Let $i\geq 0$ be the least integer such that $Q:= S_i$ is doubling.
Since the cubes $S_1,\ldots,S_{i-1}$ do not belong to $\DD^{db}$, by Lemma \ref{lemcad23} we have
$$\Theta_\mu(100B(S_j))\leq  (C_0A_0)^{n+1}\,A_0^{-9(n+1)(j-1)}\,\Theta_\mu(100B(S_0))\lesssim_{A_0, C_0} \Theta_\mu(B(x,r))
$$
for $j=1,\ldots,i$. This implies that any ball $B$ concentric with $B_{S_i}$ such that $100B(S_i))\subset B \subset B(x,r)$ satisfies
$$\Theta_\mu(B)\lesssim \Theta_\mu(B(x,r))\leq A^n\,\Theta_\mu(B(x,Ar))\leq \delta.$$
The same estimate holds if $B\subset B(x,Ar)$ and $r(B)\geq r$.
So we infer that $\Theta_\mu(AB_{S_i})\lesssim \delta$, and by adjusting suitably the initial choice of $\delta>0$ if necessary, (b) follows.
\end{proof}

\vspace{2mm}

 In view of the preceding lemma, we fix $A$ big enough and $\delta$ small enough (to be chosen below) and for each $Q \in \DD$ we define
$$
\mathrm{HD}(Q)=\left\{R \subsetneq Q, \; R \in \DD^{db}, \Theta_\mu(R) > \tau, \; R \; \mathrm{maximal}\right\},
$$
where $\tau=c\,\tau_0$, with $c$ as in (a) of Lemma \eqref{lemdens}. We also set
$$
\mathrm{LD}(Q)=\left\{R \subsetneq Q, \; R \in \DD^{db}, \Theta_\mu(AB_R) \leq \delta, \; R \; \mathrm{maximal}\right\}.
$$
We construct now a subfamily of $\DD$ that we will denote by $\Sigma$. First, we pick $\Sigma_0 = \{Q_{k_0}\}$. Notice that we can assume that $\Theta_\mu(Q_{k_0}) \leq \delta$ (by possibly enlarging the cube $Q_{k_0}$ which already contains the support of $\mu$). Then, for a general cube $Q$ we define
$$
\Sigma_1(Q) = \bigcup_{R \in \mathrm{HD}(Q)} \mathrm{LD}(R).
$$
By the above discussion, $\Sigma_1(Q)$ is always a partition of $Q$ up to a set of $\mu$-measure $0$. We now inductively define
$$
\Sigma_{k+1} = \bigcup_{Q \in \Sigma_k} \Sigma_1(Q), \;\; k\geq 0.
$$
Of course, $\Sigma=\{\Sigma_k\}_{k \geq 0}$ is a filtration of $\mathrm{supp}(\mu)$ modulo a set of $\mu$-measure $0$
composed of cubes of low density. So if we denote 
$$\Delta_Q f = \sum_{S \in \Sigma_1(Q)} \langle f\rangle_S \chi_S - \langle f \rangle_Q \chi_Q,$$
  we may write, in the $L^2(\mu)$ sense,
\begin{align*}
f =  \langle f\rangle_{Q_{k_0}} +\sum_{Q\in \Sigma} \Delta_Q f
%&  = \langle f\rangle_{Q_{k_0}} + \sum_{Q \in \Sigma} \sum_{T \in \Sigma_1(Q)} \langle f\rangle_T \chi_T - \langle f \rangle_Q \chi_Q \\
 = \langle f\rangle_{Q_{k_0}} + \sum_{Q \in \Sigma} \sum_{S \in \Sigma_1(Q)} \left(\langle f \rangle_S - \langle f \rangle_Q \right) \chi_S.
\end{align*}
We apply this decomposition to $T\mu$ to obtain, by orthogonality of the martingale differences,
$$
\|T\mu\|_{L^2(\mu)}^2 =  (\langle T\mu\rangle_{Q_{k_0}})^2\,\mu(Q_{k_0})+ \sum_{Q \in \Sigma} \|\Delta_Q (T\mu)\|_{L^2(\mu)}^2.
$$
The following proposition is the key step for the proof of Theorem \ref{ENV.theoremA}:

\begin{proposition}\label{ENV.mainlemma}
Suppose that $T_\mu$ is bounded in $L^2(\mu)$. There exists $N_0$ such that if $Q \in \Sigma_N$, $N> N_0$, and $\delta$ is chosen small enough, then
$$
\|\Delta_Q(T\mu)\|_{L^{2}(\mu)}^2 \gtrsim_\tau \mu(Q).
$$
\end{proposition}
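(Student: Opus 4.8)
The plan is to run a variational/maximum-principle argument in the spirit of \cite{ENV}, but localized to a single martingale step. Fix $Q\in\Sigma_N$ with $N>N_0$. By construction $Q$ is a doubling cube with $\Theta_\mu(AB_Q)\le\delta$ (it belongs to some $\mathrm{LD}(R)$), while the children $S\in\Sigma_1(Q)$ are again doubling cubes of low density obtained after first passing through the high-density cubes $\mathrm{HD}(Q)$; in particular there is at least one $R\in\mathrm{HD}(Q)$ with $\Theta_\mu(R)>\tau$, and the cubes of $\mathrm{HD}(Q)$ carry a fixed proportion of $\mu(Q)$ (this is where the gap between $\tau$ and $\delta$ enters, and where $N_0$ and the polynomial growth constant $c_0$ get used to guarantee that the ``first generation'' inside $Q$ is nontrivial). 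The first step is therefore to record the geometric skeleton: $Q$ has low density, it contains a high-density subcube $R$ with $\mu(R)\gtrsim_\tau\ell(R)^n$ and $\ell(R)\approx\ell(Q)$ (if $\ell(R)\ll\ell(Q)$ one iterates or uses the small-boundary property \eqref{eqfk490} to relocate mass), and $\Delta_Q(T\mu)$ is constant on each $S\in\Sigma_1(Q)$ with value $\langle T\mu\rangle_S-\langle T\mu\rangle_Q$.

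Next I would reduce $T\mu$ to its constant-coefficient model. Writing $T\mu=T^{loc}\mu+T^{far}\mu$, where $T^{loc}$ integrates against $\chi_{cB_Q}$ and $T^{far}$ against the rest, the far part is essentially constant on $B_Q$ up to an error controlled by the Calderón--Zygmund estimate of Lemma \ref{lemcz}(b) together with the polynomial growth of $\mu$, so it is killed (up to a small error) by the martingale difference $\Delta_Q$. For the local part I replace $\nabla_1\EE_A(x,y)$ by $\nabla_1\Theta(x,y;A(x_Q))$, the gradient of the fundamental solution of the \emph{constant symmetric} matrix $E_Q:=A(x_Q)$ frozen at the center of $Q$; by \eqref{eqap*1} (or Lemma \ref{ENV.constantCoef}(2)) the difference has kernel bounded by $|x-y|^{\alpha-n}$ and contributes at most $C\ell(Q)^\alpha\,\mu(Q)$ in $L^2(\mu)$, which is $o(\mu(Q))$ once $\ell(Q)$ is small, i.e.\ once $N>N_0$. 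Thus up to negligible errors it suffices to bound from below $\|\Delta_Q(\nabla_1 S_{E_Q}(\chi_{cB_Q}\mu))\|_{L^2(\mu)}$, where $S_{E_Q}$ is the single layer potential for the constant-coefficient operator $L_{E_Q}$.

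Now comes the heart of the matter, the variational argument. Consider the single layer potential $u=S_{E_Q}(\chi_{cB_Q}\mu)$, which solves $L_{E_Q}u=0$ off $\supp\mu$; the quantity we must lower-bound is (a multiple of) the oscillation of $\nabla u$ between the cubes $S$ and the average over $Q$. Suppose, for contradiction, that $\|\Delta_Q(T\mu)\|_{L^2(\mu)}^2$ is much smaller than $\mu(Q)$. Then $\nabla u$ is, in an $L^2(\mu)$-averaged sense, almost equal to a single constant vector $\vec v=\langle T\mu\rangle_Q$ on all of $Q$, and in particular on the high-density subcube $R$. The idea, exactly as in \cite{ENV}, is to test this against a well-chosen competitor: using that $L_{E_Q}$ is a constant-coefficient elliptic operator, one builds an $L_{E_Q}$-harmonic function $h$ in a neighbourhood of $\overline{cB_Q}$ (for instance an affine function, or a suitable second-degree polynomial adapted to $E_Q$ and to $\vec v$) and exploits the maximum principle / the mean-value-type identities for $L_{E_Q}$ to derive that the total mass $\mu(cB_Q)$, or the mass $\mu(R)$ of the high-density piece, must be small — contradicting $\Theta_\mu(R)>\tau$. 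Concretely: the (co-normal) flux of $\nabla u$ through the boundary of a ball encodes $\mu$ of the ball; if $\nabla u\approx\vec v$ is nearly constant on $Q\supset R$ then this flux is nearly zero on the relevant scale, forcing $\mu(R)$ to be small. Quantifying this requires the energy/$L^2$ estimates for $S_{E_Q}$ on balls, Caccioppoli, and interior regularity for $L_{E_Q}$-harmonic functions, all of which are standard for constant coefficients.

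The main obstacle I expect is making the ``$\nabla u$ almost constant on $Q$ $\Rightarrow$ $\mu(R)$ small'' implication quantitative and scale-invariant, i.e.\ turning the soft maximum-principle heuristic into an inequality of the form $\mu(R)\le \varepsilon(\delta)\,\mu(Q)+C\,\|\Delta_Q(T\mu)\|_{L^2(\mu)}\,\ell(Q)^{n/2}$ with $\varepsilon(\delta)\to0$ as $\delta\to0$. This is where the low-density hypothesis $\Theta_\mu(AB_Q)\le\delta$ (large $A$) is crucial: it is what lets one treat $cB_Q$ as ``almost empty'' on the boundary so that the potential-theoretic identities for $L_{E_Q}$ close up with good constants, and it is what dictates the choice of $\delta$ small and $N_0$ large in the statement. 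A secondary technical point is controlling the interaction between the frozen-coefficient error, the far-field error, and the main term simultaneously; but each of these is a clean Calderón--Zygmund estimate given Lemmas \ref{lemcz} and \ref{ENV.constantCoef} and the $n$-polynomial growth of $\mu$.
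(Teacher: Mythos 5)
Your overall strategy (reduce to a local/constant-coefficient model, then run an ENV-style variational plus maximum-principle argument against the high-density cubes) is the right flavor, and your treatment of the far field and of the error $\langle T_\mu\chi_Q\rangle_Q$ matches what the paper does. But the heart of your argument is a heuristic that, as stated, does not close. You argue: if $\|\Delta_Q(T\mu)\|_{L^2(\mu)}^2\ll\mu(Q)$ then $\nabla u\approx\vec v$ is nearly constant on $Q$, hence the co-normal flux of $\nabla u$ through spheres around a high-density cube $R$ is nearly zero, hence $\mu(R)$ is small. The problem is that the hypothesis only gives $\nabla u\approx\vec v$ in $L^2(\mu)$, i.e.\ \emph{on} $\supp\mu$, while the flux is an integral over a sphere that generically misses $\supp\mu$ entirely ($\mu$ is totally irregular, nothing like AD-regular). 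Transferring smallness of the potential from $\supp\mu$ to the ambient space is precisely the hard step, and it cannot be done for $|T\nu|^2$ alone: the paper's mechanism is (i) replace $\mu|_Q$ by an absolutely continuous measure $\sigma$ supported on the balls $\frac14B(S)$, $S\in\Sigma_1'(Q)$ (Lemmas \ref{lemstep1}--\ref{lemstep3}), so that all potentials are continuous and bounded; (ii) minimize the functional $F(g)=\lambda\|g\|_\infty\|\sigma\|+\int|T(g\sigma)|^2g\,d\sigma$ to produce a measure $\nu=b\sigma$ satisfying the \emph{pointwise} inequality $|T\nu|^2+2T^*([T\nu]\nu)\le6\lambda$ on $\supp\nu$; (iii) observe that this particular combination can be written as a supremum of expressions $T^*(\omega)+\text{const}$ with $T^*\omega$ exactly $L_{A^*}$-harmonic off $\supp\nu$, so the maximum principle extends the bound to all of $\R^{n+1}$; and (iv) derive the contradiction not by a flux computation but by pairing against a vector field $\Psi_Q=\sum_{R\in\mathrm{HD}_1(Q)}g_R$ with $T^*(g_R\,d\LL^{n+1})=\varphi_R$ a bump on $1.5B_R$, which converts $\sum\nu(1.5B_R)\approx_\tau\mu(Q)$ into $\int T\nu\cdot\Psi_Q\,d\LL^{n+1}$ and closes with Cauchy--Schwarz. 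None of steps (i)--(iv) appears in your plan, and your "competitor harmonic function $h$" does not substitute for them.

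Two secondary issues. First, your freezing of the coefficients at the single point $x_Q$ over all of $cB_Q$ produces an error kernel of size $\ell(Q)^\alpha|x-y|^{-n}$, whose integral against $\mu$ over $cB_Q$ is not bounded (it diverges logarithmically in the inner scale); the paper only ever freezes at the variable point, $A(x)$ or $A(y)$, for which the error is the integrable $|x-y|^{\alpha-n}$, and it never replaces the operator $T$ globally by a constant-coefficient one — the exact $L_{A^*}$-harmonicity of $T^*\omega$ is what the maximum principle is applied to. Second, your claim that one may assume $\ell(R)\approx\ell(Q)$ for some high-density $R$ is neither true nor needed; the cubes of $\mathrm{HD}(Q)$ can be arbitrarily small relative to $Q$, and the paper handles this with the Vitali selection $\mathrm{HD}_1(Q)$ and the field $\Psi_Q$ rather than by "relocating mass".
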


Of course, Proposition \ref{ENV.mainlemma} immediately implies Theorem \ref{ENV.theoremA} by contradiction and via the martingale decomposition above. The rest of this paper is therefore devoted to the proof of Proposition \ref{ENV.mainlemma}.

%%%%%%%%%%%%%%%%%%%%%%%%%%%%%%%%%%%%%%%% SECTION APPROXIMATION OF MU %%%%%%%%%%%%%%%%%%%%%%%%%%%%%%%%%%%%%%%%%%%%%%%%%%%%%%%%%%%%%%%%%

\section{Localization and approximation of $\mu$} 

From now on we assume that $T_\mu$ is bounded in $L^2(\mu)$. To prove Proposition \ref{ENV.mainlemma} we have to estimate $\|\Delta_Q(T\mu)\|_{L^{2}(\mu)}$ from below for each fixed $Q\in\DD$. We assume that $\diam(Q)\leq 1/2$, so that the estimates (a) and (b) in Lemma
\ref{lemcz} hold for all $x,y\in Q$, and even for $x,y$ in a small neighborhood of $Q$ (with a fixed $R=1$, say).

\vv
The first step in the argument is a change of measure in order to work with another that is absolutely continuous with respect to the Lebesgue measure on $\mathbb{R}^{n+1}$ and that is supported close to $Q$.
This will make the application of a suitable maximum principle possible. The family $\Sigma_1(Q)$ may consist of an infinite number of cubes.
For technical reasons, it is convenient to consider a finite subfamily of $\Sigma_1(Q)$ which contains a very big proportion of $\mu(Q)$. So, for a small $\ve_0>0$ to be chosen below, we let $\Sigma_1'(Q)$ be a {\em finite} subfamily of $\Sigma_1(Q)$
such that
\begin{equation}\label{stop00}
\mu\biggl(\,\bigcup_{S\in \Sigma_1'(Q)} S\biggr)> (1-\ve_0)\,\mu(Q).
\end{equation}

\vv
We will now define some  auxiliary regions $I_{\kappa_0}(S)$ associated with each $S\in\Sigma_1'(Q)$.
Given a small constant $0<\kappa_0\ll1$ (to be fixed below) and $S\in\Sigma_1'(Q)$, we denote
\begin{equation}\label{eqik00}
I_{\kappa_0}(S) = \{x\in S:\dist(x,\supp\mu\setminus S)\geq \kappa_0\ell(S)\}.
\end{equation}
So $I_{\kappa_0}(S)$ is some kind of inner subset of $S$. 
Observe that, by the doubling property and the small boundary condition \eqref{eqfk490} of $S\in\Sigma_1'(Q)$, we have
$$\mu(S\setminus I_{\kappa_0}(S))\lesssim \kappa_0^{1/2}\,\mu(3.5B_S) \lesssim \kappa_0^{1/2}\,\mu(S).$$
Next we consider the function
$$\tilde{\chi}_Q = \sum_{S\in\Sigma_1'(Q)}\chi_{I_{\kappa_0}(S)},$$
so that we have
$$\|\tilde{\chi}_Q-\chi_Q\|_{L^2(\mu)}^2 = \sum_{S\in\Sigma_1(Q)\setminus \Sigma_1'(Q)} \mu(S) + 
\sum_{S\in \Sigma_1'(Q)} \mu(S\setminus I_{\kappa_0}(S)) \leq \ve_0\,\mu(Q) + c\,\kappa_0^{1/2}\,\mu(Q).$$
We also denote by $\eta$ the auxiliary measure
$$\eta=\tilde{\chi}_Q\,\mu.$$

We consider the approximate measure
$$
\sigma = \sigma_Q = \sum_{S \in \Sigma_1'(Q)} \frac{\mu(I_{\kappa_0}(S))}{\mathcal{L}^{n+1}(\tfrac14B(S))} \mathcal{L}^{n+1}|_{\tfrac14B(S)}
$$
Notice that the new measure $\sigma$ satisfies 
\begin{equation}\label{eq:sigma=muQ}
\sigma(\R^{n+1}) = \int \tilde{\chi}_Q\,d\mu =\eta(\R^{n+1})\approx \mu(Q),
\end{equation}
since  we assume
$\ve_0\ll1$ and $\kappa_0\ll1$.
The objective of this section is to prove the following result.

\begin{lemma} \label{ENV.approximation}
For any $\ve>0$, if $Q\in\Sigma_N$ and $N$ is big enough, 
$\kappa_0$ and $\ve_0$ are small enough, $A$ is big enough, and $\delta$ is small enough (depending also on $A$ and $\kappa_0$), then
$$
\|\Delta_Q T \mu\|_{L^2(\mu)}^2 \geq c \|T \sigma\|_{L^2(\sigma)}^2 - \ve\,\mu(Q),
$$
where $c>0$ is an absolute constant.
\end{lemma}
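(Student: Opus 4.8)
The plan is to compare $\Delta_Q(T\mu)$ with $T\sigma$ in three stages: first pass from $\Delta_Q(T\mu)$ to $T\eta$ restricted appropriately, then from $T\eta$ to $T\sigma$, controlling all errors by $\ve\,\mu(Q)$. I will use repeatedly that, by the construction of $\Sigma_1(Q)$, every $S\in\Sigma_1'(Q)$ is a low-density doubling cube ($\Theta_\mu(AB_S)\le\delta$), while $Q\in\Sigma_N$ sits below a high-density cube, so $\Theta_\mu(Q)>\tau$; this density gap is what will eventually force $\|\Delta_Q(T\mu)\|$ to dominate.

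\textbf{Step 1: reduce $\Delta_Q(T\mu)$ to an oscillation of $T\eta$.} On each $S\in\Sigma_1'(Q)$ one has $\Delta_Q(T\mu)=\langle T\mu\rangle_S-\langle T\mu\rangle_Q$, which up to the additive constant $\langle T\mu\rangle_Q$ is the $S$-average of $T\mu$. I would first replace $\mu$ by $\eta=\tilde\chi_Q\mu$ inside the operator: since $\|\tilde\chi_Q-\chi_Q\|_{L^2(\mu)}^2\lesssim(\ve_0+\kappa_0^{1/2})\mu(Q)$ and $T_\mu$ is bounded on $L^2(\mu)$, we get $\|T\mu-T\eta\|_{L^2(\mu)}\lesssim(\ve_0+\kappa_0^{1/2})^{1/2}\,\mu(Q)^{1/2}$, hence this contributes at most $\ve\,\mu(Q)$ to the martingale difference if $\ve_0,\kappa_0$ are small. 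Next, using the Calder\'on--Zygmund estimate (b) of Lemma \ref{lemcz} together with the small-boundary/doubling properties of the $S$'s and the fact that $T_\mu$ is bounded, I would show that $\|\Delta_Q(T\mu)\|_{L^2(\mu)}^2$ is comparable, up to $\ve\,\mu(Q)$, to
$$
\sum_{S\in\Sigma_1'(Q)}\int_{I_{\kappa_0}(S)} \bigl|T\eta(x)-m_{S}(T\eta)\bigr|^2\,d\mu(x)
$$
for suitable constants $m_S$ (the $S$-averages of $T\eta$, or of $T\sigma$), plus error terms. The point is that on $I_{\kappa_0}(S)$ the contribution to $T\eta$ of the mass of $\eta$ living in $S$ itself is what matters, and this is where $\eta$ and $\sigma$ will be interchangeable.

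\textbf{Step 2: swap $\eta$ for $\sigma$ away from the balls $\tfrac14 B(S)$, and control the near-diagonal part.} For $x\in I_{\kappa_0}(S)$ and a cube $S'\in\Sigma_1'(Q)$ with $S'\ne S$, or for the far part of $S$ itself, I would estimate $\bigl|\int_{S'}K(x,y)\,d\eta(y)-\int_{\frac14B(S')}K(x,y)\,d\sigma(y)\bigr|$ by comparing the two measures, which have the same total mass $\mu(I_{\kappa_0}(S'))$ and both are supported in (a fixed dilate of) $B_{S'}$; using the smoothness estimate (b) of Lemma \ref{lemcz} this difference is $\lesssim \mu(I_{\kappa_0}(S'))\,\ell(S')^{\gamma}/\dist(x,S')^{n+\gamma}$ when $x$ is far from $S'$, and summing against the low-density bound $\Theta_\mu(AB_{S'})\le\delta$ yields a total error $\lesssim\delta^{1/2}\mu(Q)$, absorbed into $\ve\,\mu(Q)$ for $\delta$ small. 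The near-diagonal terms — $x\in I_{\kappa_0}(S)$ and the mass of $\eta$ (resp.\ $\sigma$) inside $S$ (resp.\ $\tfrac14B(S)$), plus neighboring $S'$ touching $S$ — are handled by the size bound (a) of Lemma \ref{lemcz} together with $n$-polynomial growth of $\mu$ (and of $\sigma$, which inherits growth from the low-density condition), giving an $L^2(\mu)$ and $L^2(\sigma)$ bound by a constant times $\mu(Q)$, which again is made $\le\ve\,\mu(Q)$ because the relevant regions have small measure (controlled by $\kappa_0$ and by how many neighbours a cube in the David--Mattila lattice can have). After these swaps, $\int_{I_{\kappa_0}(S)}|T\eta-m_S(T\eta)|^2\,d\mu$ becomes, up to $\ve\,\mu(Q)$, $\int_{\frac14B(S)}|T\sigma-m_S(T\sigma)|^2\,d\sigma$, using also $\mu(I_{\kappa_0}(S))=\sigma(\tfrac14B(S))$ and the fact that on $\tfrac14B(S)$ the density of $\sigma$ is $\approx\mu(I_{\kappa_0}(S))/\ell(S)^{n+1}$, comparable to $\Theta_\mu(S)/\ell(S)$, so $L^2(\mu)$-mass over $I_{\kappa_0}(S)$ and $L^2(\sigma)$-mass over $\tfrac14B(S)$ match up to constants.

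\textbf{Step 3: assemble and recognize $\|T\sigma\|_{L^2(\sigma)}^2$.} Summing over $S\in\Sigma_1'(Q)$ gives $\|\Delta_Q(T\mu)\|_{L^2(\mu)}^2 \ge c\sum_S \int_{\frac14B(S)}|T\sigma-m_S(T\sigma)|^2\,d\sigma - \ve\,\mu(Q)$. Finally, to pass from this sum of local oscillations to the global quantity $\|T\sigma\|_{L^2(\sigma)}^2$, I would either choose the constants $m_S$ to be the genuine $\sigma$-averages and note that, since $\sigma(\R^{n+1})\approx\mu(Q)$ and the $\tfrac14B(S)$ are disjoint, $\sum_S\int_{\frac14B(S)}|T\sigma-\langle T\sigma\rangle_{\frac14B(S)}|^2 d\sigma \ge \|T\sigma\|_{L^2(\sigma)}^2 - |\langle T\sigma\rangle_{\sigma}|^2\sigma(\R^{n+1}) - (\text{cross terms})$, and then control $\int|\langle T\sigma\rangle_\sigma|^2$ — this constant term can itself be dropped because in the final application of Lemma \ref{ENV.approximation} one only needs a lower bound for $\|T\sigma\|_{L^2(\sigma)}^2$ that survives subtracting a controllable constant; alternatively one keeps $m_S$ as part of the statement. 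Either way the displayed inequality follows.

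\textbf{Main obstacle.} The delicate point is Step 1 together with the oscillation-to-average reduction: making precise that $\|\Delta_Q(T\mu)\|_{L^2(\mu)}^2$ genuinely controls $\sum_S\int_{I_{\kappa_0}(S)}|T\eta - (\text{const})|^2 d\mu$ rather than just being bounded by it. The inequality we want goes in the "good" direction ($\|\Delta_Q(T\mu)\|^2 \ge cL - \ve\mu(Q)$ where $L$ is the $\sigma$-side), so we need the errors we discard to be genuinely small \emph{relative to} $\mu(Q)$, uniformly in $Q\in\Sigma_N$; this forces a careful bookkeeping of how the parameters are chosen — $N$ large, then $\kappa_0,\ve_0$ small, then $A$ large, then $\delta$ small depending on $A,\kappa_0$ — so that at each swap the relative error is a fixed small multiple of $\mu(Q)$ (using the doubling of the $S$'s, the small-boundary estimate \eqref{eqfk490}, polynomial growth, and the low-density bound $\delta$). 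The geometric fact that in the David--Mattila lattice each $S\in\Sigma_1'(Q)$ has a bounded number of "neighbours" of comparable size, so the near-diagonal sums telescope correctly against $\sigma(\R^{n+1})\approx\mu(Q)$, is the other place where care is needed.
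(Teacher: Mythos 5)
There is a genuine gap here, and it lies in the overall architecture rather than in the individual technical estimates. The martingale difference $\Delta_Q(T\mu)$ is piecewise constant: on each $S\in\Sigma_1(Q)$ it equals $\langle T\mu\rangle_S-\langle T\mu\rangle_Q$, so $\|\Delta_Q(T\mu)\|_{L^2(\mu)}^2=\sum_S|\langle T\mu\rangle_S-\langle T\mu\rangle_Q|^2\mu(S)$ records only the cube \emph{averages} of $T\mu$ and carries no information about the oscillation of $T\mu$ (or $T\eta$) inside each $S$. Hence the comparability you assert in Step 1 with $\sum_S\int_{I_{\kappa_0}(S)}|T\eta-m_S(T\eta)|^2\,d\mu$, with $m_S$ the $S$-averages, cannot hold: that quantity is the internal oscillation, which is orthogonal (in the martingale sense) to $\Delta_Q(T\mu)$, is not controlled by it, and in general is only $O(\mu(Q))$, not $\le\ve\,\mu(Q)$. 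The same inversion reappears, fatally, in Step 3: since the balls $\tfrac14B(S)$ are disjoint, $\sum_S\int_{\frac14B(S)}|T\sigma-\langle T\sigma\rangle_{\frac14B(S)}|^2\,d\sigma=\|T\sigma\|_{L^2(\sigma)}^2-\sum_S|\langle T\sigma\rangle_{\frac14B(S)}|^2\,\sigma(\tfrac14B(S))$, and the subtracted quantity is not a single controllable constant but the full sum of squared local averages. That sum is precisely the \emph{main} term --- it is what ultimately produces the lower bound $\|T\sigma\|_{L^2(\sigma)}^2\gtrsim_\tau\sigma(\R^{n+1})$ in the variational argument --- so it cannot be discarded, and local oscillations alone can never bound $\|T\sigma\|_{L^2(\sigma)}^2$ from below.

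The paper's proof runs in the opposite direction. First (Lemma \ref{lemstep1}) one shows that the subtracted average $\langle T\mu\rangle_Q$ and all contributions to $\langle T\mu\rangle_S$ from outside $Q$ are \emph{individually small}: the far field contributes $\lesssim A^{-\gamma}$ by the smoothness of $K$; the annulus $AB_Q\setminus Q^\lambda$ contributes $\lesssim A^n\lambda^{-n}\delta$ because $Q$ itself is a low-density cube; the collar $Q^\lambda\setminus Q$ is handled by the small-boundary property and the assumed $L^2(\mu)$ boundedness of $T_\mu$; and $|\langle T_\mu\chi_Q\rangle_Q|\lesssim\ell(Q)^\alpha$ by the approximate antisymmetry of $K$ after freezing coefficients (Lemma \ref{ENV.constantCoef}). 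Thus $\Delta_QT\mu\approx\sum_S\langle T_\mu\chi_Q\rangle_S\chi_S$, i.e.\ a sum of squared \emph{averages}. Then (Lemmas \ref{lemstep2} and \ref{lemstep3}) one shows that $T\sigma$ is approximately \emph{constant} on each ball $\tfrac14B(S)$, equal to $\langle T\eta\rangle_{\eta,S}\approx\langle T_\mu\chi_Q\rangle_S$ up to pointwise errors of order $\kappa_0^{-n}A^n\delta+A^{-\gamma}+\ell(Q)^\alpha$ plus a tail sum controlled by a duality/maximal-function argument; hence $\|T\sigma\|_{L^2(\sigma)}^2\lesssim\sum_S|\langle T\eta\rangle_{\eta,S}|^2\eta(S)+\ve\,\mu(Q)\lesssim\|\Delta_QT\mu\|_{L^2(\mu)}^2+\ve\,\mu(Q)$. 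The technical ingredients of your Step 2 (self-interaction small by low density, distant cubes compared via the H\"older estimate on $K$) do appear in the paper's Lemma \ref{lemstep3}, but there they serve to prove that $T\sigma$ is close to the constants $\langle T\eta\rangle_{\eta,S}$, not to transfer an oscillation from $\eta$ to $\sigma$.
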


Proposition \ref{ENV.mainlemma} immediately follows from Lemma \ref{ENV.approximation} and  \eqref{eq:sigma=muQ} once we show that
\begin{equation}\label{eq:Tsigma-lowerbnd}
\|T\sigma\|_{L^2(\sigma)}^2 \gtrsim_\tau \sigma(\R^{n+1})
\end{equation}
for $Q \in \Sigma_k$, $k > N_0$. 

\vv

The proof of Lemma \ref{ENV.approximation} will be carried out in several steps. The first one is the following:

\begin{lemma}\label{lemstep1}
For any $\ve>0$, if $Q\in\Sigma_N$ and $N$ is big enough, 
$A$ is chosen big enough, and $\delta$ small enough (depending also on $A$), then
we have
$$\Bigl\|\Delta_Q T \mu - \sum_{S\in\Sigma_1(Q)} \langle T_\mu\chi_Q\rangle_S\,\chi_S
\Bigr\|_{L^2(\mu)}^2\lesssim \varepsilon\,\mu(Q).$$
\end{lemma}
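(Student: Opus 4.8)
The plan is to start from the definition $\Delta_Q T\mu = \sum_{S\in\Sigma_1(Q)}\langle T\mu\rangle_S\chi_S - \langle T\mu\rangle_Q\chi_Q$ and to compare it term by term with $\sum_{S\in\Sigma_1(Q)}\langle T_\mu\chi_Q\rangle_S\chi_S$. Writing $T\mu = T_\mu\chi_Q + T_\mu\chi_{Q^c} = T_\mu(\mathbf 1)$ formally (this requires care since $T\mu$ need not be in $L^2(\mu)$, but $T\mu(x)$ is well defined pointwise for $x$ away from $\operatorname{supp}\mu$ and, using the boundedness of $T_\mu$, one makes sense of the averages $\langle T\mu\rangle_S$ on the doubling cubes of $\Sigma_1(Q)$ via a weak-$*$/truncation limit argument; I would quote the boundedness of $T_\mu$ here). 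The key point is that the main term $\langle T_\mu\chi_Q\rangle_S$ is common to both expressions, so after cancellation the difference becomes
$$
\Delta_Q T\mu - \sum_{S\in\Sigma_1(Q)}\langle T_\mu\chi_Q\rangle_S\chi_S
= \sum_{S\in\Sigma_1(Q)}\langle T_\mu\chi_{Q^c}\rangle_S\,\chi_S
- \langle T\mu\rangle_Q\,\chi_Q.
$$
So the $L^2(\mu)$ norm squared splits into two pieces: $\sum_{S\in\Sigma_1(Q)}\langle T_\mu\chi_{Q^c}\rangle_S^2\,\mu(S)$ and $\langle T\mu\rangle_Q^2\,\mu(Q)$.

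For the first piece, the strategy is to show that $T_\mu\chi_{Q^c}$ is essentially constant on $Q$ — more precisely, that its oscillation on $Q$ is small once the stopping conditions are invoked. On each $S\in\Sigma_1(Q)$ and for $x,x'\in S$ (or just $x\in S$ compared with the center $x_Q$), I would estimate $|T_\mu\chi_{Q^c}(x) - T_\mu\chi_{Q^c}(x')|$ using the Calderón–Zygmund smoothness estimate (b) of Lemma \ref{lemcz}: the contribution of the far part $\{y\notin Q : |y-x_Q|\geq 2\operatorname{diam} Q\}$ is controlled by $\sum_k 2^{-k\gamma}\Theta_\mu(2^kB_Q)\lesssim 1$ times the relative size $(\ell(S)/\operatorname{dist}(\cdot))^\gamma$ of $S$ inside $Q$ — and this is where the hypothesis that $Q\in\Sigma_N$ with $N$ large enters, since being deep in the filtration $\Sigma$ forces, via the low-density stopping cubes $\mathrm{LD}$, that the enclosing doubling cube of $Q$ has density $\leq\delta$, so all these far tails carry a factor that can be made $\leq\varepsilon$ by taking $\delta$ small (and $A$ large in the definition of $\mathrm{LD}$). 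The near part $\{y\notin Q : |y-x_Q|<2\operatorname{diam} Q\}$ is a "collar" whose $\mu$-measure is small by the small-boundary property \eqref{eqfk490} of $Q$; bounding $|K|\lesssim|x-y|^{-n}$ and integrating over dyadic annuli around $\partial Q$ gives a bound $\lesssim\delta$ or $\lesssim$ (a power of the collar measure), again absorbable. Combining, $\sum_S\langle T_\mu\chi_{Q^c}\rangle_S^2\mu(S)$ differs from $(T_\mu\chi_{Q^c}(x_Q))^2\mu(Q)$ by at most $\varepsilon\mu(Q)$.

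For the second piece, the same oscillation estimate shows $\langle T\mu\rangle_Q$ is close to $\langle T_\mu\chi_Q\rangle_Q + T_\mu\chi_{Q^c}(x_Q)$; but $\langle T_\mu\chi_Q\rangle_Q$ and $\sum_S\langle T_\mu\chi_Q\rangle_S\mu(S)/\mu(Q)$ match up to the martingale-difference bookkeeping, and the remaining constant pieces $T_\mu\chi_{Q^c}(x_Q)$ cancel against the first sum up to the errors already controlled — so the whole difference collapses to something $\lesssim\varepsilon\mu(Q)$. I expect the main obstacle to be the first piece: making rigorous sense of $\langle T\mu\rangle_S$ and $\langle T\mu\rangle_Q$ (since a priori $T\mu\notin L^2(\mu)$) via the $L^2$-boundedness hypothesis and the doubling property of the cubes in $\Sigma_1(Q)$, and carefully tracking how "large $N$ + small $\delta$ + large $A$" is exactly what kills the far-field oscillation of $T_\mu\chi_{Q^c}$; the collar/small-boundary estimate is routine by comparison.
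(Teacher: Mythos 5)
Your algebraic reduction is correct: on each $S\in\Sigma_1(Q)$ the difference equals $\langle T_\mu\chi_{\R^{n+1}\setminus Q}\rangle_S-\langle T_\mu\chi_{\R^{n+1}\setminus Q}\rangle_Q-\langle T_\mu\chi_Q\rangle_Q$, and your treatment of the oscillation of $T_\mu\chi_{\R^{n+1}\setminus Q}$ over $Q$ matches the paper's (Calder\'on--Zygmund smoothness outside $AB_Q$ giving $A^{-\gamma}$, the size bound together with the low-density condition $\Theta_\mu(AB_Q)\leq\delta$ on the intermediate region, the small-boundary property for the collar). The genuine gap is the diagonal term $\langle T_\mu\chi_Q\rangle_Q$. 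It does \emph{not} cancel against the subtracted sum $\sum_S\langle T_\mu\chi_Q\rangle_S\chi_S$: that sum is already spent cancelling the $\langle T_\mu\chi_Q\rangle_S$ inside $\langle T\mu\rangle_S$, and what survives from $-\langle T\mu\rangle_Q\chi_Q$ is the constant $-\langle T_\mu\chi_Q\rangle_Q$ on all of $Q$, which a priori is only bounded by the operator norm of $T_\mu$ --- a constant, not $\ve$. Since $K(x,y)=\nabla_1\EE_A(x,y)$ is not antisymmetric, symmetrization alone does not kill it; the paper writes $\langle T_\mu\chi_Q\rangle_Q=\frac1{2\mu(Q)}\iint_{Q\times Q}\bigl(K(y,z)+K(z,y)\bigr)\,d\mu(y)\,d\mu(z)$ and invokes the frozen-coefficient approximation (Lemma \ref{ENV.constantCoef}, via \eqref{eqap*1}) to compare $K$ with the genuinely antisymmetric constant-coefficient kernel $\nabla_1\Theta(\cdot,\cdot;A(y))$, obtaining $|\langle T_\mu\chi_Q\rangle_Q|\lesssim\ell(Q)^\alpha$; this is precisely where ``$N$ large, hence $\ell(Q)$ small'' enters for this term. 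Your proposal has no substitute for this step, and the phrase about the term ``matching up to martingale-difference bookkeeping'' is an incorrect cancellation.

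A second, smaller problem is the collar $Q^\lambda\setminus Q$: the pointwise bound you propose ($|K|\lesssim|x-y|^{-n}$ integrated over the collar) fails, because for $x\in Q$ close to $\partial Q$ the integral $\int_{Q^\lambda\setminus Q}|x-y|^{-n}\,d\mu(y)$ is not small in general --- polynomial growth only gives a bound that degenerates (logarithmically in the truncation) as $x$ approaches the collar, and smallness of $\mu(Q^\lambda\setminus Q)$ does not rescue a sup-norm estimate. The paper instead keeps $T_\mu\chi_{Q^\lambda\setminus Q}$ as an $L^2$ error and bounds $\|T_\mu\chi_{Q^\lambda\setminus Q}\|_{L^2(\mu)}^2\lesssim\mu(Q^\lambda\setminus Q)\lesssim\lambda^{1/2}\mu(Q)$ using the standing assumption that $T_\mu$ is bounded on $L^2(\mu)$ together with \eqref{eqfk490}. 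Since you already invoke the $L^2$ boundedness to make sense of the averages, this is fixable, but as written the collar estimate is not ``routine''.
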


\begin{proof}
Let $0<\lambda<1$ be a small parameter to be chosen below. We denote
$$Q^\lambda = \{y\in \supp\mu:\dist(y,Q)\leq\lambda\,\ell(Q)\}.$$
Note that, by \eqref{eqfk490},
\begin{equation}\label{eqthin1}
\mu(Q^\lambda\setminus Q)\lesssim \lambda^{1/2}\,\mu(3.5B_Q)\lesssim \lambda^{1/2}\,\mu(Q),
\end{equation}
using also that $Q$ is doubling.

For $x\in S\in\Sigma_1(Q)$, we split
\begin{align}\label{eqsplit22}
\Delta_Q T\mu(x) & = \langle T\mu\rangle_S - \langle T\mu\rangle_Q\\
& =\langle T_\mu\chi_{\R^{n+1}\setminus AB_Q} \rangle_S + \langle T_\mu\chi_{AB_Q\setminus Q^\lambda} \rangle_S + \langle T_\mu\chi_{Q^\lambda\setminus Q} \rangle_S + \langle T_\mu\chi_{Q} \rangle_S
\nonumber\\
&\quad 
- \langle T_\mu\chi_{\R^{n+1}\setminus AB_Q} \rangle_Q
- \langle T_\mu\chi_{AB_Q\setminus Q^\lambda} \rangle_Q - \langle T_\mu\chi_{Q^\lambda\setminus Q} \rangle_Q. 
- \langle T_\mu\chi_{Q} \rangle_Q.\nonumber
\end{align}

Observe that
$$|\langle T_\mu\chi_{\R^{n+1}\setminus AB_Q} \rangle_S - \langle T_\mu\chi_{\R^{n+1}\setminus AB_Q} \rangle_Q|\leq \sup_{y,y'\in Q} |T_\mu\chi_{\R^{n+1}\setminus AB_Q}(y) - T_\mu\chi_{\R^{n+1}\setminus AB_Q}
(y')|.$$
To estimate the supremum on the right hand side, note that for $y,y'\in Q$,
\begin{align*}
|T_\mu\chi_{\R^{n+1}\setminus AB_Q}(y) - T_\mu\chi_{\R^{n+1}\setminus AB_Q}
(y')| & \leq \int_{z\in \R^{n+1}\setminus AB_Q} |K(y,z)-K(y',z)|\,d\mu(z) \\
& \lesssim \int_{z\in \R^{n+1}\setminus AB_Q} \frac{\ell(Q)^\gamma}{|x_Q-z|^{n+\gamma}}\,d\mu(z).
\end{align*}
By standard estimates, using the polynomial growth of $\mu$, it follows easily that 
$$\int_{z\in \R^{n+1}\setminus AB_Q} \frac{\ell(Q)^\gamma}{|x_Q-z|^{n+\gamma}}\,d\mu(z)\lesssim A^{-\gamma}.$$
 Hence,
$$|\langle T_\mu\chi_{\R^{n+1}\setminus AB_Q} \rangle_S - \langle T_\mu\chi_{\R^{n+1}\setminus AB_Q} \rangle_Q|
\lesssim A^{-\gamma}.$$

Next we estimate the terms $\langle T_\mu\chi_{AB_Q\setminus Q^\lambda} \rangle_S$ and $\langle T_\mu\chi_{AB_Q\setminus Q^\lambda} \rangle_Q$ on the right hand side of \eqref{eqsplit22}.
To this end, we write 
\begin{align*}
|\langle T_\mu\chi_{AB_Q\setminus Q^\lambda} \rangle_S| + |\langle T_\mu\chi_{AB_Q\setminus Q^\lambda} \rangle_Q|&\leq 2 \,\sup_{y\in Q} | T_\mu\chi_{AB_Q\setminus Q^\lambda}(y)| \\
&\leq 2
\sup_{y\in Q} \int_{AB_Q\setminus Q^\lambda}|K(y,z)|\,d\mu(z).
\end{align*}
For $y\in Q$ and $AB_Q\setminus Q^\lambda$, we have
$$|K(y,z)|\lesssim \frac1{|y-z|^n}\lesssim \frac1{(\lambda\,\ell(Q))^n}.$$
Thus,
$$|\langle T_\mu\chi_{AB_Q\setminus Q^\lambda} \rangle_S| + |\langle T_\mu\chi_{AB_Q\setminus Q^\lambda} \rangle_Q|\lesssim \frac{\mu(AB_Q)}{(\lambda\,\ell(Q))^n} \lesssim A^n\lambda^{-n}\,\Theta_\mu(AB_Q)
\leq A^n\lambda^{-n}\,\delta.$$

Concerning the term $\langle T_\mu\chi_{Q} \rangle_Q$, by Fubini, we have
$$\langle T_\mu\chi_{Q} \rangle_Q = \frac1{\mu(Q)} \iint_{Q\times Q} K(y,z)\,d\mu(y)\,d\mu(z) = \frac1{\mu(Q)}\iint_{Q\times Q} K(z,y)\,d\mu(y)\,d\mu(z),$$
and so
$$\langle T_\mu\chi_{Q} \rangle_Q = \frac1{2\,\mu(Q)} \iint_{Q\times Q} (K(y,z) + K(z,y))\,d\mu(y)\,d\mu(z).$$
Observe now that, by the antisymmetry of $\nabla_1 \Theta(\cdot,\cdot;A(y))$,
\begin{align*}
|K(y,z) + K(z,y)| &\leq |\nabla_1 \mathcal{E}_A (y,z) - \nabla_1   \Theta(y,z;A(y))| +  |\nabla_1  \mathcal{E}_A (z,y) + \nabla_1  \Theta(y,z;A(y))|\\
& = |\nabla_1  \mathcal{E}_A (y,z) - \nabla_1  \Theta(y,z;A(y))| +  |\nabla_1   \mathcal{E}_A (z,y) -\nabla_1   \Theta(z,y;A(y))|,
\end{align*}
and then, by \eqref{eqap*1} (which follows from Lemma \ref{ENV.constantCoef}), we have
$$\int_Q |K(y,z) + K(z,y)|\,d\mu(z)\lesssim \ell(Q)^\alpha,$$
for all $y\in Q$.
Therefore,
\begin{equation}\label{eq17}
|\langle T_\mu\chi_{Q} \rangle_Q |\lesssim \ell(Q)^\alpha.
\end{equation}

From \eqref{eqsplit22} and the preceding estimates we derive
$$\bigl|\Delta_Q T\mu(x) -  \langle T_\mu\chi_{Q} \rangle_S\bigr| \lesssim  
|\langle T_\mu\chi_{Q^\lambda\setminus Q} \rangle_S| + |\langle T_\mu\chi_{Q^\lambda\setminus Q} \rangle_Q|+
A^{-\gamma} +
A^n\lambda^{-n}\,\delta + \ell(Q)^\alpha,$$
for all $x\in S$.
 Integrating with respect to the measure $\mu|_Q$ and using Cauchy-Schwarz inequality, the latter estimate implies
$$\Bigl\|\Delta_Q T \mu - \sum_{S\in\Sigma_1(Q)} \langle T_\mu\chi_Q\rangle_S\,\chi_S
\Bigr\|_{L^2(\mu)}^2\lesssim \| T_\mu\chi_{Q^\lambda\setminus Q}\|_{L^2(\mu)}^2 + 
(A^{-\gamma} +
A^n\lambda^{-n}\,\delta + \ell(Q)^\alpha)^2\,\mu(Q).$$
Using the $L^2(\mu)$ boundedness of $T_\mu$ and \eqref{eqthin1}, we derive
$$\| T_\mu\chi_{Q^\lambda\setminus Q}\|_{L^2(\mu)}^2\lesssim \mu(Q^\lambda\setminus Q)\lesssim
\lambda^{1/2}\,\mu(Q).$$
Therefore,
$$\Bigl\|\Delta_Q T \mu - \sum_{S\in\Sigma_1(Q)} \langle T_\mu\chi_Q\rangle_S\,\chi_S
\Bigr\|_{L^2(\mu)}^2\lesssim 
(A^{-\gamma} +
A^n\lambda^{-n}\,\delta + \ell(Q)^\alpha + \lambda^{1/4})^2\,\mu(Q).$$
So the lemma follows if we take $A$ big enough, $\ell(Q)$ small enough, $\lambda$ small enough, and finally, $\delta$ small enough, depending on $A$ and $\lambda$.
\end{proof}

\vv
Recall that we denoted $\eta=\tilde{\chi}_Q\,\mu$. For a function $g$ and $S\in\DD$, we will write
$$\langle g\rangle_{\eta,S}=\frac1{\eta(S)}\int_S g\,d\eta.$$
We will also write $\langle g\rangle_{\mu,S}\equiv\langle g\rangle_{S}$ to emphasize the dependence on $\mu$ of
the latter notation.

\vv
\begin{lemma}\label{lemstep2}
We have
$$\Bigl\|\sum_{S\in\Sigma_1'(Q)} \langle T\eta\rangle_{\eta,S}\,\chi_S
\Bigr\|_{L^2(\eta)}^2\leq 4
\Bigl\|\sum_{S\in\Sigma_1'(Q)} \langle T_\mu\chi_Q\rangle_{\mu,S}\,\chi_S\Bigr\|_{L^2(\mu)}^2 + c\,(\ve_0 + \kappa_0^{1/3})\,\mu(Q).$$
\end{lemma}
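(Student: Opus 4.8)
The proof is an exercise in replacing $\eta$ and the $\eta$-averages on the left by $\mu$ and the $\mu$-averages on the right, paying a small price at each step. I would first record the facts used repeatedly: $\eta=\tilde{\chi}_Q\,\mu$ with $0\le\tilde{\chi}_Q\le1$; $\tilde{\chi}_Q$ is the indicator of the disjoint union $\bigcup_{S\in\Sigma_1'(Q)}I_{\kappa_0}(S)$, so that $\langle g\rangle_{\eta,S}=\frac1{\mu(I_{\kappa_0}(S))}\int_{I_{\kappa_0}(S)}g\,d\mu$ and, on $\supp\eta$, $\chi_S=\chi_{I_{\kappa_0}(S)}$ $\eta$-a.e.; by the doubling and small boundary properties of $S\in\Sigma_1'(Q)$ one has $\mu(S\setminus I_{\kappa_0}(S))\lesssim\kappa_0^{1/2}\mu(S)$, hence $\mu(I_{\kappa_0}(S))=(1+O(\kappa_0^{1/2}))\,\mu(S)$; $T\eta=T_\mu\tilde{\chi}_Q$; the estimate $\|\chi_Q-\tilde{\chi}_Q\|_{L^2(\mu)}^2\le(\ve_0+c\,\kappa_0^{1/2})\,\mu(Q)$ established just before the statement; and $\|T_\mu\chi_Q\|_{L^2(\mu)}^2\lesssim\mu(Q)$, which follows from the standing hypothesis that $T_\mu$ is bounded on $L^2(\mu)$ together with $\|\chi_Q\|_{L^2(\mu)}^2=\mu(Q)$.

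The first step removes $T\eta$ in favour of $T_\mu\chi_Q$. Writing $T\eta=T_\mu\chi_Q-T_\mu(\chi_Q-\tilde{\chi}_Q)$ and noting that $g\mapsto\sum_{S\in\Sigma_1'(Q)}\langle g\rangle_{\eta,S}\chi_S$ is, modulo $\eta$-null sets, the conditional expectation of $g$ onto the partition $\{I_{\kappa_0}(S)\}_{S\in\Sigma_1'(Q)}$ of $\supp\eta$, hence a contraction on $L^2(\eta)$, we obtain, using $\eta\le\mu$ and the $L^2(\mu)$-boundedness of $T_\mu$,
$$\Bigl\|\sum_{S\in\Sigma_1'(Q)}\langle T_\mu(\chi_Q-\tilde{\chi}_Q)\rangle_{\eta,S}\,\chi_S\Bigr\|_{L^2(\eta)}\le\|T_\mu(\chi_Q-\tilde{\chi}_Q)\|_{L^2(\mu)}\lesssim\|\chi_Q-\tilde{\chi}_Q\|_{L^2(\mu)}\lesssim(\ve_0+\kappa_0^{1/2})^{1/2}\mu(Q)^{1/2}.$$
By the triangle inequality in $L^2(\eta)$ it then remains to estimate $\bigl\|\sum_{S\in\Sigma_1'(Q)}\langle T_\mu\chi_Q\rangle_{\eta,S}\,\chi_S\bigr\|_{L^2(\eta)}$.

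For the second step, set $g=T_\mu\chi_Q$. Splitting $\int_S g\,d\mu=\int_{I_{\kappa_0}(S)}g\,d\mu+\int_{S\setminus I_{\kappa_0}(S)}g\,d\mu$ and dividing by $\mu(I_{\kappa_0}(S))^{1/2}$ gives, for each $S$,
$$\langle g\rangle_{\eta,S}\,\mu(I_{\kappa_0}(S))^{1/2}=\langle g\rangle_{\mu,S}\,\frac{\mu(S)}{\mu(I_{\kappa_0}(S))^{1/2}}-\frac{1}{\mu(I_{\kappa_0}(S))^{1/2}}\int_{S\setminus I_{\kappa_0}(S)}g\,d\mu.$$
Regarding these as the $S$-entries of $\ell^2$-vectors and applying the triangle inequality in $\ell^2$, the first term contributes at most $(1+c\,\kappa_0^{1/2})^{1/2}\|\sum_{S\in\Sigma_1'(Q)}\langle T_\mu\chi_Q\rangle_{\mu,S}\,\chi_S\|_{L^2(\mu)}$ (because $\mu(S)^2/\mu(I_{\kappa_0}(S))\le(1+c\,\kappa_0^{1/2})\mu(S)$), while the second contributes at most
$$\Bigl(\sum_{S\in\Sigma_1'(Q)}\frac{\mu(S\setminus I_{\kappa_0}(S))}{\mu(I_{\kappa_0}(S))}\int_{S\setminus I_{\kappa_0}(S)}g^2\,d\mu\Bigr)^{1/2}\lesssim\kappa_0^{1/4}\Bigl(\int g^2\,d\mu\Bigr)^{1/2}=\kappa_0^{1/4}\,\|T_\mu\chi_Q\|_{L^2(\mu)}\lesssim\kappa_0^{1/4}\mu(Q)^{1/2}$$
by Cauchy--Schwarz on $S\setminus I_{\kappa_0}(S)$ and $\mu(S\setminus I_{\kappa_0}(S))/\mu(I_{\kappa_0}(S))\lesssim\kappa_0^{1/2}$. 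The point I would stress is that here the two averages must be compared directly as above: bounding $\langle g\rangle_{\eta,S}^2$ by $\langle g^2\rangle_{\eta,S}$ via Jensen would destroy the martingale cancellation and fail to produce the right-hand side of the lemma.

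Combining the two steps by the triangle inequality,
$$\Bigl\|\sum_{S\in\Sigma_1'(Q)}\langle T\eta\rangle_{\eta,S}\,\chi_S\Bigr\|_{L^2(\eta)}\le(1+c\,\kappa_0^{1/2})^{1/2}\Bigl\|\sum_{S\in\Sigma_1'(Q)}\langle T_\mu\chi_Q\rangle_{\mu,S}\,\chi_S\Bigr\|_{L^2(\mu)}+C\,(\ve_0^{1/2}+\kappa_0^{1/4})\,\mu(Q)^{1/2},$$
and squaring, using $(a+b)^2\le2a^2+2b^2$ together with $\kappa_0\ll1$ (so $2(1+c\,\kappa_0^{1/2})\le4$) and $\kappa_0^{1/2}\le\kappa_0^{1/3}$, yields the stated inequality. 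There is no serious obstacle here — this is an approximation lemma — and the only mildly delicate point is the average-comparison bookkeeping of the second step, in particular taking the conditional-expectation contraction with respect to $\eta$ rather than $\mu$ and not collapsing the averages through Jensen's inequality.
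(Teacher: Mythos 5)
Your proof is correct and follows essentially the same two-step strategy as the paper: first trade $\chi_Q$ for $\tilde{\chi}_Q$ using the $L^2(\mu)$ boundedness of $T_\mu$ together with the contraction property of the averaging operator, then compare $\mu$-averages with $\eta$-averages using $\mu(S\setminus I_{\kappa_0}(S))\lesssim \kappa_0^{1/2}\mu(S)$. The only (harmless) differences are the order of the two replacements and that your direct Cauchy--Schwarz treatment of the average comparison yields an error of order $\kappa_0^{1/2}\mu(Q)$, slightly sharper than the $\kappa_0^{1/3}\mu(Q)$ the paper obtains via H\"older with $p=3/2$, and in particular sufficient for the stated bound.
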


\begin{proof}
By the $L^2(\mu)$ boundedness of $T_\mu$, we have
\begin{align*}
\Bigl\|&\sum_{S\in\Sigma_1'(Q)} \langle T_\mu\chi_Q\rangle_{\mu,S}\,\chi_S
- \sum_{S\in\Sigma_1'(Q)} \langle T_\mu \tilde{\chi}_Q\rangle_{\mu,S}\,\chi_S \Bigr\|_{L^2(\mu)}^2\\
 & =\Bigl\| \sum_{S\in\Sigma_1'(Q)} \langle T_\mu (\chi_Q-\tilde{\chi}_Q)\rangle_{\mu,S}\,\chi_S \Bigr\|_{L^2(\mu)}^2 \lesssim \|\tilde{\chi}_Q-\chi_Q\|_{L^2(\mu)}^2 \lesssim (\ve_0 + \kappa_0^{1/2})\,\mu(Q).
\end{align*}
Therefore,
\begin{align}\label{eqcomb2}
\frac12\Bigl\|\sum_{S\in\Sigma_1'(Q)} \langle T_\mu \tilde{\chi}_Q\rangle_{\mu,S}&\,\chi_S
\Bigr\|_{L^2(\mu)}^2 \leq \Bigl\|\sum_{S\in\Sigma_1'(Q)} \langle T_\mu\chi_Q\rangle_{\mu,S}\,\chi_S\Bigr\|_{L^2(\mu)}^2\\
+
\Bigl\|\sum_{S\in\Sigma_1'(Q)}& \langle T_\mu\chi_Q\rangle_{\mu,S}\,\chi_S- \sum_{S\in\Sigma_1'(Q)} \langle T_\mu \tilde{\chi}_Q\rangle_{\mu,S}\,\chi_S \Bigr\|_{L^2(\mu)}^2\nonumber\\
&\leq \Bigl\|\sum_{S\in\Sigma_1'(Q)} \langle T_\mu\chi_Q\rangle_{\mu,S}\,\chi_S\Bigr\|_{L^2(\mu)}^2+ c\,(\ve_0 + \kappa_0^{1/2})\,\mu(Q).\nonumber
\end{align}
%which implies the statement in the lemma.

%Next we intend to compare $\Bigl\|\sum_{S\in\Sigma_1'(Q)} \langle T_\mu \tilde{\chi}_Q\rangle_{\mu,S}\,\chi_S
%\Bigr\|_{L^2(\mu)}$ to $\Bigl\|\sum_{S\in\Sigma_1'(Q)} \langle T\eta\rangle_{\eta,S}\,\chi_S
%\Bigr\|_{L^2(\eta)}$.
Next we show that
\begin{align}\label{eqcomb3}
\Bigl\|\sum_{S\in\Sigma_1'(Q)} \langle T_\mu \tilde{\chi}_Q\rangle_{\mu,S}\,\chi_S -
\sum_{S\in\Sigma_1'(Q)} \langle T_\mu \tilde{\chi}_Q\rangle_{\eta,S}\,\chi_S
\Bigr\|_{L^2(\mu)}^2 & \lesssim 
\kappa_0^{1/3}\,\mu(Q).
\end{align}
To this end, note that for any function $g\in L^2(\mu)$, writing $I(S)=I_{\kappa_0}(S)$ to shorten notation,
\begin{align*}
\bigl|\langle g\rangle_{\mu,S} - \langle g\rangle_{\eta,S}\bigr| & =
\left|\frac1{\mu(S)} \int_S g\,d\mu - \frac1{\mu(I(S))} \int_{I(S)} g\,d\mu\right|\\
& \leq
\frac{\mu(S\setminus I(S))}{\mu(S)\,\mu(I(S))} \int_S |g|\,d\mu
+ \frac1{\mu(I(S))} \int_{S\setminus I(S)} |g|\,d\mu.%\\
%& \lesssim \kappa_0^{1/2}\,\langle |g|\rangle_{\mu,S} + \frac1{\mu(I(S))} \int_{S\setminus I(S)} |g|\,d\mu
\end{align*}
To estimate the first term on the right hand side we use that 
$$\mu(S\setminus I(S))
\lesssim \kappa_0^{1/2}\,\mu(S)\lesssim \kappa_0^{1/2}\,\mu(I(S)).$$ For the second one, by H\"older's
inequality, for any $1<p<\infty$,
$$\frac1{\mu(I(S))} \int_{S\setminus I(S)} |g|\,d\mu \leq \left(\frac{\mu(S\setminus I(S))}{\mu(I(S))} 
\right)^{1/p'}\,\langle |g|^p\rangle_{\mu,S}^{1/p}\lesssim \kappa_0^{1/(2p')}\,\langle |g|^p\rangle_{\mu,S}^{1/p}.
$$
So we get
$$\bigl|\langle g\rangle_{\mu,S} - \langle g\rangle_{\eta,S}\bigr|\lesssim 
\kappa_0^{1/2}\,\langle |g|\rangle_{\mu,S} 
+ \kappa_0^{1/(2p')}\,\langle |g|^p\rangle_{\mu,S}^{1/p}
\leq 2 \kappa_0^{1/(2p')}\,\langle |g|^p\rangle_{\mu,S}^{1/p}.
$$
Therefore, choosing $p\in(1,2)$,
\begin{align*}
\Bigl\|\sum_{S\in\Sigma_1'(Q)} \langle g\rangle_{\mu,S}\,\chi_S -
\sum_{S\in\Sigma_1'(Q)} \langle g\rangle_{\eta,S}\,\chi_S
\Bigr\|_{L^2(\mu)}^2  & =
\sum_{S\in\Sigma_1'(Q)} |\langle g\rangle_{\mu,S} - \langle g\rangle_{\eta,S}|^2\,\mu(S)\\
& \lesssim \kappa_0^{1/p'}\,\sum_{S\in\Sigma_1'(Q)} \langle |g|^p\rangle_{\mu,S}^{2/p}\,\mu(S)\\
&\leq \kappa_0^{1/p'}\,\| |g|^p\|_{L^{2/p}(\mu)}^{2/p} = \kappa_0^{1/p'}\,\| g\|_{L^{2}(\mu)}^{2}.
\end{align*}

Applying the preceding estimate to $g=T_\mu \tilde{\chi}_Q$ and $p=3/2$, and using the $L^2(\mu)$ boundedness of $T_\mu$, we obtain \eqref{eqcomb3}.
%\begin{align*}
%\Bigl\|\sum_{S\in\Sigma_1'(Q)} \langle T_\mu \tilde{\chi}_Q\rangle_{\mu,S}\,\chi_S -
%\sum_{S\in\Sigma_1'(Q)} \langle T_\mu \tilde{\chi}_Q\rangle_{\eta,S}\,\chi_S
%\Bigr\|_{L^2(\mu)}^2 & \lesssim \kappa_0^{1/3}\,\| T_\mu \tilde{\chi}_Q\|_{L^{2}(\mu)}^{2}
%\lesssim 
%\kappa_0^{1/3}\,\mu(Q).
%\end{align*}
In combination with \eqref{eqcomb2}, this gives
\begin{multline*}
\Bigl\|\sum_{S\in\Sigma_1'(Q)} \langle T\eta\rangle_{\eta,S}\,\chi_S 
\Bigr\|_{L^2(\eta)}^2
%\leq
%\Bigl\|\sum_{S\in\Sigma_1'(Q)} \langle T\eta\rangle_{\eta,S}\,\chi_S 
%\Bigr\|_{L^2(\mu)}
 \leq
2\Bigl\|\sum_{S\in\Sigma_1'(Q)} \langle T_\mu \tilde{\chi}_Q\rangle_{\mu,S}\,\chi_S 
\Bigr\|_{L^2(\mu)} \\
\quad + 
2\Bigl\|\sum_{S\in\Sigma_1'(Q)} \langle T_\mu \tilde{\chi}_Q\rangle_{\mu,S}\,\chi_S -
\sum_{S\in\Sigma_1'(Q)} \langle T_\mu \tilde{\chi}_Q\rangle_{\eta,S}\,\chi_S
\Bigr\|_{L^2(\mu)}^2\\
 \leq 4\Bigl\|\sum_{S\in\Sigma_1'(Q)} \langle T_\mu\chi_Q\rangle_{\mu,S}\,\chi_S\Bigr\|_{L^2(\mu)}^2
+  c\,(\ve_0 + \kappa_0^{1/2})\,\mu(Q) +
c\,\kappa_0^{1/3}\,\mu(Q),
\end{multline*}
which yields the desired estimate.
\end{proof}
\vv

\begin{lemma}\label{lemstep3}
For any $\ve>0$, if $Q\in\Sigma_N$ and $N$ is big enough, 
$A$ is big enough, and $\delta$ small enough (depending also on $A$ and $\kappa_0$), then
$$\|T\sigma\|_{L^2(\sigma)}^2\lesssim\Bigl\|\sum_{S\in\Sigma_1'(Q)} \langle T\eta\rangle_{\eta,S}\,\chi_S
\Bigr\|_{L^2(\eta)}^2 + \ve\,\mu(Q).$$
\end{lemma}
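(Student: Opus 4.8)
The plan is to show that, on each ball $\tfrac14 B(S)$ with $S\in\Sigma_1'(Q)$, the function $T\sigma$ is as close as we please to the single scalar $\langle T\eta\rangle_{\eta,S}$: precisely, that for all $x\in\tfrac14 B(S)$ one has $|T\sigma(x)-\langle T\eta\rangle_{\eta,S}|\lesssim\ve_S$ with $\sum_{S\in\Sigma_1'(Q)}\ve_S^2\,\eta(S)\lesssim\ve\,\mu(Q)$. Granting this, since $\sigma$ charges $\tfrac14 B(S)$ with total mass $\mu(I_{\kappa_0}(S))=\eta(S)$ at constant density,
$$\|T\sigma\|_{L^2(\sigma)}^2=\sum_{S\in\Sigma_1'(Q)}\frac{\eta(S)}{\mathcal L^{n+1}(\tfrac14 B(S))}\int_{\tfrac14 B(S)}|T\sigma|^2\,d\mathcal L^{n+1}\le 2\sum_{S}\bigl(\langle T\eta\rangle_{\eta,S}^2+\ve_S^2\bigr)\eta(S)=2\Bigl\|\sum_S\langle T\eta\rangle_{\eta,S}\chi_S\Bigr\|_{L^2(\eta)}^2+2\sum_S\ve_S^2\eta(S),$$
which is the desired inequality.

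To obtain the pointwise bound, fix $S$ and $x\in\tfrac14 B(S)$, split $\sigma=\sigma|_{\tfrac14 B(S)}+(\sigma-\sigma|_{\tfrac14 B(S)})$ and $\eta=\eta|_S+(\eta-\eta|_S)$, and use $\langle T\eta\rangle_{\eta,S}=\langle T(\eta|_S)\rangle_{\eta,S}+\langle T(\eta-\eta|_S)\rangle_{\eta,S}$ to write
$$T\sigma(x)-\langle T\eta\rangle_{\eta,S}=T(\sigma|_{\tfrac14 B(S)})(x)-\langle T(\eta|_S)\rangle_{\eta,S}+\bigl[T(\sigma-\sigma|_{\tfrac14 B(S)})-T(\eta-\eta|_S)\bigr](x)+\bigl[T(\eta-\eta|_S)(x)-\langle T(\eta-\eta|_S)\rangle_{\eta,S}\bigr].$$
The first term is controlled by the low-density of $S$: since $A\ge 28$, $\mu(I_{\kappa_0}(S))\le\mu(S)\le\mu(AB_S)\lesssim\delta A^n\ell(S)^n$, so $\sigma$ has density $\lesssim\delta A^n/\ell(S)$ on $\tfrac14 B(S)$, and with Lemma~\ref{lemcz}(a) and $\int_{B(x,r)}|x-w|^{-n}\,d\mathcal L^{n+1}(w)\lesssim r$ we get $|T(\sigma|_{\tfrac14 B(S)})(x)|\lesssim\delta A^n$. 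The term $\langle T(\eta|_S)\rangle_{\eta,S}$ is controlled exactly as in \eqref{eq17}: by the near-antisymmetry $|K(y,z)+K(z,y)|\le|\nabla_1\mathcal E_A(y,z)-\nabla_1\Theta(y,z;A(y))|+|\nabla_1\mathcal E_A(z,y)-\nabla_1\Theta(z,y;A(y))|$ and \eqref{eqap*1}, it is $\lesssim\ell(S)^\alpha$. The last bracket is the oscillation of $T(\eta-\eta|_S)$ over $\tfrac14 B(S)\cup I_{\kappa_0}(S)$, a set of diameter $\lesssim\ell(S)$; since $\supp(\eta-\eta|_S)\subset\supp\mu\setminus S$ stays at distance $\ge\kappa_0\ell(S)$ from $I_{\kappa_0}(S)$, splitting this support at distance $\sim A\ell(S)$, using $\mu(AB_S)\lesssim\delta A^n\ell(S)^n$ on the near part and Lemma~\ref{lemcz}(b) plus the $n$-polynomial growth of $\mu$ on the far part, one gets a bound $\lesssim\delta A^n\kappa_0^{-n}+c_0A^{-\gamma}$.

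The remaining (and main) term is $A_2(x):=\sum_{S'\ne S}T\nu_{S'}(x)$, with $\nu_{S'}:=\sigma|_{\tfrac14 B(S')}-\eta|_{S'}$: each $\nu_{S'}$ has \emph{zero} total mass, is supported in $B(x_{S'},28r(S'))$, and has $|\nu_{S'}|(\mathbb R^{n+1})\lesssim\mu(I_{\kappa_0}(S'))$. For the boundedly many $S'$ with $x\in C_1B(S')$ one bounds $T\nu_{S'}(x)$ directly by $\lesssim\delta A^n\kappa_0^{-n}$, using $\Theta_\mu(AB_{S'})\le\delta$ and $\dist(x,I_{\kappa_0}(S'))\gtrsim\kappa_0\ell(S')$. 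For all other $S'$, the zero-mass cancellation and Lemma~\ref{lemcz}(b) give $|T\nu_{S'}(x)|\lesssim\mu(I_{\kappa_0}(S'))\,r(S')^\gamma|x-x_{S'}|^{-n-\gamma}$ with $|x-x_{S'}|\gtrsim r(S')+\ell(S)$, and the sum over these $S'$ is estimated scale by scale, at each scale optimizing between the per-cube low-density bound $\mu(I_{\kappa_0}(S'))\lesssim\delta A^n\ell(S')^n$ and the aggregate polynomial-growth bound $\sum_{S'\subset Q}\mu(I_{\kappa_0}(S'))\le\mu(\text{annulus})\lesssim c_0(\text{radius})^n$. Collecting everything yields $|T\sigma(x)-\langle T\eta\rangle_{\eta,S}|\lesssim\ve_S$ with $\ve_S$ built out of $\delta A^n\kappa_0^{-n}$, $c_0A^{-\gamma}$, $\ell(S)^\alpha$, and an $A_2$-contribution carrying the small factor $(\delta A^n)^\gamma c_0^{1-\gamma}$; after summing the squares against $\eta(S)$ (using $\sum_S\eta(S)\le\mu(Q)$ and $\ell(S)\le\ell(Q)$), one chooses $A$ large, then $\kappa_0$ small and $N$ large (so $\ell(Q)$ is small), and finally $\delta$ small in terms of $A$ and $\kappa_0$, to make the total $\le\ve\mu(Q)$.

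The hard part is the separated–cube sum in $A_2$: handled crudely it is only $O(c_0)$ per scale, so one must genuinely exploit that \emph{every} cube $S'$ is of low density together with the fact that the $S'$ tile $Q$ (so the aggregate mass at any scale obeys the growth bound as well), which is what produces the gain $(\delta A^n)^\gamma$; one must also verify that the per-scale errors, when summed against $\eta(S)$ over $S\in\Sigma_1'(Q)$, stay $\lesssim\mu(Q)$, which is where the doubling and maximality structure of $\Sigma_1'(Q)$ enters rather than merely the hypotheses on $\mu$. If that summation is not clean pointwise, the alternative is to estimate $A_2$ in $L^2(\mathcal L^{n+1}|_{\tfrac14 B(S)})$ and to absorb it using the assumed $L^2(\mu)$–boundedness of $T_\mu$.
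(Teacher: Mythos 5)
Your overall architecture coincides with the paper's: the same splitting of $T\sigma(x)$ for $x\in\tfrac14B(S)$ into the self--interaction of $\tfrac14B(S)$, the oscillation of the far part of $\eta$, the diagonal average handled by near--antisymmetry, and the zero--mean differences $\sigma|_{\frac14B(R)}-\eta|_R$ over $R\neq S$; the bounds $\delta A^n$, $\kappa_0^{-n}A^n\delta$, $A^{-\gamma}$ and $\ell(Q)^\alpha$ for the first three pieces are exactly the paper's. The gap is in the main term $A_2$. After the zero--mass cancellation you are left with
$\sum_{R\neq S}\ell(R)^\gamma\,\eta(R)\,D(R,S)^{-n-\gamma}$ (the paper's \eqref{eqt2***}), and you propose to make this \emph{pointwise} small by optimizing, scale by scale, between the per--cube low--density bound $\eta(R)\lesssim\delta A^n\ell(R)^n$ and the aggregate growth bound. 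This does not close: at a fixed distance scale $d\gg A\ell(S)$ the optimization indeed yields $\approx(\delta A^n)^\gamma c_0^{1-\gamma}$, but there is no geometric decay across distance scales (the aggregate mass in the annulus of radius $d$ is only $\lesssim c_0d^n$, with no smallness once $d>A\ell(S)$), so summing over the $\approx\log(\ell(Q)/\ell(S))$ scales between $\ell(S)$ and $\ell(Q)$ produces a factor that is unbounded as $S$ ranges over $\Sigma_1'(Q)$. In short, the sum is small "per scale'' but not in total, and no choice of $\delta,A,\kappa_0$ uniform in $S$ rescues a pointwise estimate.

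The paper circumvents this precisely by \emph{not} estimating that sum pointwise. It collects the quantities into the function $g=\sum_S\bigl(\sum_{R\neq S}\ell(R)^\gamma\eta(R)D(R,S)^{-n-\gamma}\bigr)\chi_S$ and bounds $\|g\|_{L^2(\eta)}$ by duality (see \eqref{eqgh57}): interchanging the order of summation, for a fixed \emph{source} cube $R$ the kernel $\ell(R)^\gamma D(R,S)^{-n-\gamma}$ becomes an approximate identity at scale $\ell(R)$, so the dyadic annuli around $R$ contribute geometrically, with the near scales controlled by $\Theta_\mu(AB_R)\le\delta$ and the far ones by $A^{-\gamma}$; this gives $\int gh\,d\eta\lesssim\ve_1\int M_\eta h\,d\eta$ and hence $\|g\|_{L^2(\eta)}\lesssim\ve_1\,\eta(Q)^{1/2}$, which is all the lemma needs. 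Your closing remark that one could instead "estimate $A_2$ in $L^2$ and absorb it using the $L^2(\mu)$--boundedness of $T_\mu$'' points in the right direction (an $L^2$ rather than pointwise estimate is indeed required) but is not carried out, and the mechanism that actually works is the duality/maximal--function argument above, not the boundedness of $T_\mu$. As written, the proof of the key term is therefore incomplete.
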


\begin{proof}
For all $x\in\tfrac14B(S)$, $S\in\Sigma_1'(Q)$, we write
\begin{align}\label{eqt123*}
\bigl| T\sigma(x)\bigr|  &\leq \bigl|T(\chi_{\tfrac14B(S)}\sigma)(x)\bigr| \\
&+  \bigl|T(\chi_{\R^{n+1}\setminus \tfrac14B  (S)}\sigma)(x) - T(\chi_{\R^{n+1}\setminus S}\,\eta)(x)\bigr|\nonumber\\
 &+ \bigl|T(\chi_{\R^{n+1}\setminus S}\,\eta)(x)- \langle T\eta\rangle_{\eta,S}\bigr|
+\bigl|\langle T\eta\rangle_{\eta,S}\bigr|=: T_1 + T_2 + T_3 + \bigl|\langle T\eta\rangle_{\eta,S}\bigr|.\nonumber
\end{align}

Using that $$\sigma|_{\tfrac14B(S)} = \mu(I(S))\,\frac{\LL^{n+1}|_{\tfrac14B  (S)}}{\LL^{n+1}\bigl(\tfrac14B  (S)\bigr)},$$
it follows that
$$T_1\lesssim  \frac{\mu(I(S))}{r(B(S))^{n+1}} \int_{\frac14B(S)} \frac1{|x-y|^n}\,d\LL^{n+1}(y)
\lesssim \frac{\mu(S)}{r(B(S))^n}\lesssim A^n \delta,$$
 where in the last inequality we used that $S$ has low density. 

Next we will deal with the term $T_3$ in \eqref{eqt123*}.
To this end, for $x\in\tfrac14B(S)$  we have
\begin{align}\label{eqsd539}
\bigl|T(\chi_{\R^{n+1}\setminus S}\,\eta)(x)&- \langle T\eta\rangle_{\eta,S}\bigr| \leq \bigl|T(\chi_{\R^{n+1}\setminus 2B_S}\,\eta)(x) - \langle T(\chi_{\R^{n+1}\setminus 2B_S}\,\eta)\rangle_{\eta,S}\bigr|\\
&+\bigl|T(\chi_{2B_S\setminus S}\,\eta)(x)\bigr| +  \bigl|\langle T(\chi_{2B_S\setminus S}\,\eta)\rangle_{\eta,S}\bigr| + \bigl|\langle T(\chi_{S}\,\eta)\rangle_{\eta,S}\bigr|.\nonumber
\end{align} 
The second term on the right hand side of \eqref{eqsd539} satisfies
\begin{equation*}%\label{eq18}
\bigl|T(\chi_{2B_S\setminus S}\,\eta)(x)\bigr|\leq \int_{2B_S\setminus S}\frac1{|x-y|^n}\,d\eta(y)
\lesssim \frac{\eta(2B_S)}{r(B(S))^n}\lesssim A^n\, \delta,
\end{equation*}
recalling that $x\in \frac14B(S)$ and that $\Theta_\eta(2B_S)\lesssim A^n \,\delta$ for the last estimate.

A similar argument works for the term $ \bigl|\langle T(\chi_{2B_S\setminus S}\,\eta)\rangle_{\eta,S}\bigr|$.
The main difference is that in the case when $x\in I(S)$ and $y\in 2B_S\setminus S$, we can only ensure that $|x-y|\geq \kappa_0\,\ell(S)$, and thus we derive
$$\bigl|T(\chi_{2B_S\setminus S}\,\eta)(x)\bigr|\leq \int_{2B_S\setminus S}\frac1{|x-y|^n}\,d\eta(y)
\lesssim \frac{\eta(2B_S)}{\kappa_0^n\,r(B(S))^n}\lesssim \kappa_0^{-n} A^n\,\delta.$$
Averaging over $x\in I(S)$  with respect to $\mu$ and recalling that $\mu(I(S))=\eta(S)$, we obtain
$$ \bigl|\langle T(\chi_{2B_S\setminus S}\,\eta)\rangle_{\eta,S}\bigr|\lesssim \kappa_0^{-n}A^n\,\delta.$$

Now we turn our attention to the first term on the right hand side of \eqref{eqsd539}. For $x'\in S$, 
we have
\begin{align}\label{eqigu3*}
\bigl|T(\chi_{\R^{n+1}\setminus 2B_S}\,\eta)(x) &-T(\chi_{\R^{n+1}\setminus 2B_S}\,\eta)(x')\bigr|\\
&\leq \int_{\R^{n+1}\setminus 2B_S} \bigl|K(x-y)-K(x'-y)\bigr|\,d\eta(y)\nonumber\\
& \lesssim \left(\int_{AB_S\setminus 2B_S} +
\int_{\R^{n+1}\setminus AB_S}\right) \frac{\ell(S)^\gamma}{|y-x_S|^{n+\gamma}}
\,d\eta(y)\nonumber\\
& \lesssim \frac{\mu(AB_S)}{\ell(S)^n} + \frac{c_0}{A^\gamma}\lesssim \delta\,A^{n} + A^{-\gamma},
\nonumber
\end{align}
taking into account that $S\in {\rm LD}(R)$ for some $R\in\DD$ and $\mu$ has polynomial growth with constant $c_0$ for the last inequality. 
Averaging on $x'\in S$ with respect to $\eta$ we get
$$\bigl|T(\chi_{\R^{n+1}\setminus 2B_S}\,\eta)(x) - \langle T(\chi_{\R^{n+1}\setminus 2B_S}\,\eta)\rangle_{\eta,S}\bigr|
\lesssim \delta\,A^{n} + A^{-\gamma}.$$

Regarding the term $\bigl|\langle T(\chi_{S}\eta)\rangle_{\eta,S}\bigr|$, arguing exactly as in
\eqref{eq17}, we obtain
$$\bigl|\langle T(\chi_{S}\,\eta)\rangle_{\eta,S}\bigr|\lesssim \ell(S)^\alpha\lesssim \ell(Q)^\alpha.$$
Thus, we derive
\begin{equation}\label{eq1934}
T_3=
\bigl|T(\chi_{\R^{n+1}\setminus S}\,\eta)(x)- \langle T\eta\rangle_{\eta,S}\bigr|\lesssim \kappa_0^{-n}\,A^n\,\delta + 
\delta\,A^{n} + A^{-\gamma} + \ell(Q)^\alpha =: \ve_1.
\end{equation}

Finally we deal with the term $T_2$.
For $x\in\tfrac14B  (S)$ we write
\begin{align}\label{eqt2***}
T_2 &= \bigl|T(\chi_{\R^{n+1}\setminus \tfrac14B  (S)}\sigma)(x) - T(\chi_{\R^{n+1}\setminus S}\,\eta)(x)\bigr| \\
&\leq \sum_{R\in \Sigma_1'(Q)\setminus\{S\}} \left|\int K(x-y)\,d(\sigma|_{\frac14B  (R)} -  \eta|_R)\right|\nonumber\\
&\leq
\sum_{R\in \Sigma_1'(Q)\setminus\{S\}} \int |K(x-y) - K(x-z_R)|\,d(\sigma|_{\frac14B  (R)} +  \eta|_R),\nonumber
\end{align}
using that $\sigma(\tfrac14B  (R)) =  \eta(R)$ for the last inequality.

Denote
$$D(R,S)=\ell(R) + \ell(S) + \dist(R,S).$$
From the fact that  $\frac12B(R)\cap \frac12B(S)=\varnothing$ and that 
$\supp( \eta|_R) = I_{\kappa_0}(R)\subset R$ for all $R\in\Sigma_1'(Q)$, it follows easily that
if $x\in \tfrac14B  (S)$ and $y\in\tfrac14B  (R)\cup \supp( \eta|_R)$, then
%\begin{equation*}%\label{eq1905}
%x\in \tfrac14B  (S),\;\;y\in \tfrac14B  (R)\cup \supp( \eta|_R)\quad\Rightarrow \quad|x-y|\gtrsim 
%\kappa_0\,D(S,R).
%\end{equation*}
\begin{equation*}%\label{eq1905}
|x-y|\gtrsim \kappa_0\,D(R,S).
\end{equation*}
We leave the details for the reader. Then, using the property (b) of the kernel $K$ in Lemma \ref{lemcz}, one gets
\begin{equation}\label{eqdqr1}
|K(x-y) - K(x-z_R)|\lesssim c(\kappa_0)\,\frac{\ell(R)^\gamma}{D(R,S)^{n+\gamma}}.
\end{equation}
Plugging this estimate into \eqref{eqt2***}, we obtain
$$T_2\lesssim c(\kappa_0)
\sum_{R\in \Sigma_1'(Q)\setminus\{S\}} \frac{\ell(R)^\gamma\, \eta(R)}{D(R,S)^{n+\gamma}}.$$

So from \eqref{eqt123*} and the estimates for the terms $T_1,T_2,T_3$, we infer that
for all $x\in\tfrac14B  (S)$ with $S\in\Sigma_1'(Q)$,
\begin{equation}\label{eqnos58}
\bigl|T\sigma(x)\bigr|  \lesssim  \bigl|\langle T\eta\rangle_{\eta,S}\bigr|+
c(\kappa_0)
\sum_{R\in \Sigma_1'(Q)\setminus\{S\}} \frac{\ell(R)^\gamma\, \eta(R)}{D(R,S)^{n+\gamma}}+ \ve_1,
\end{equation}
where $\ve_1$ is defined in \eqref{eq1934}.
Denote 
$$
\wt g(x) = \sum_{S\in \Sigma_1'(Q)}\sum_{R\in \Sigma_1'(Q)\setminus\{S\}} \frac{\ell(R)^\gamma\, \eta(R)}{D(R,S)^{n+\gamma}}\,\chi_{\frac14B(S)}(x).$$ 
Squaring and integrating \eqref{eqnos58} with respect to $\sigma$, we get
\begin{align}\label{eqff593}
\bigl\|T\sigma\bigr\|_{L^2(\sigma)}^2  & \lesssim  
\sum_{S\in \Sigma_1'(Q)} \bigl|\langle T\eta\rangle_{\eta,S}\bigr|^2\,\sigma(\tfrac14B(S)) +
\ve_1^2\,\sigma(Q) +  c(\kappa_0)^2\,\|\wt g\|_{L^2(\sigma)}^2,
\end{align}
Note that, since $\sigma(\tfrac14B(S))=\eta(S)$ for each $S$, 
\begin{align}\label{eqff594}
\sum_{S\in \Sigma_1'(Q)} \bigl|\langle T\eta\rangle_{\eta,S}\bigr|^2\,\sigma(\tfrac14B(S)) 
=\Bigl\|\sum_{S\in\Sigma_1'(Q)} \langle T\eta\rangle_{\eta,S}\,\chi_S
\Bigr\|_{L^2(\eta)}^2.
\end{align}
 By the same reasoning we 
deduce that $\|\wt g\|_{L^2(\sigma)}^2 = \| g\|_{L^2(\eta)}^2$, where
$$
 g(x) = 
 \sum_{S\in \Sigma_1'(Q)}
 \sum_{R\in \Sigma_1'(Q)\setminus\{S\}} \frac{\ell(R)^\gamma\, \eta(R)}{D(R,S)^{n+\gamma}}\,\chi_{S}(x).
$$

We will estimate $\|g\|_{L^2(\eta)}$ by duality: for any non-negative function $h\in L^2(\eta)$,
we get
\begin{align}\label{eqgh57}
\int g\,h\,d\eta &= 
\sum_{S\in \Sigma_1'(Q)}
 \sum_{R\in \Sigma_1'(Q)\setminus\{S\}} \frac{\ell(R)^\gamma\, \eta(R)}{D(R,S)^{n+\gamma}} \,\int_S h\,d\eta\\
&= \sum_{R\in \Sigma_1'(Q)}  \eta(R)
 \sum_{S\in \Sigma_1'(Q)\setminus\{R\}} \frac{\ell(R)^\gamma}{D(R,S)^{n+\gamma}} \,\int_S h\,d\eta.\notag
 \end{align}
%Recall now that $\eta|_{Q_0}=\mu_0$. 
 For each $z\in R\in \Sigma_1'(Q)$ we have 
\begin{align*}
 \sum_{S\in \Sigma_1'(Q)\setminus\{R\}} \frac{\ell(R)^\gamma}{D(R,S)^{n+\gamma}} \,\int_S h\,d\eta &\lesssim
\int \frac{\ell(R)^\gamma\,h(y)}{\bigl(\ell(R)+|z-y|\bigr)^{n+\gamma}}\,d\eta(y)\\
& = \int_{|z-y|\leq \ell(R)}\cdots + \sum_{j\geq1}\int_{2^{j-1}\ell(R)<|z-y|\leq 2^{j}\ell(R)}\cdots \\
& \lesssim 
\sum_{j\geq0} \langle h\rangle_{\eta,B(z,2^{j}\ell(R))} \;\,\frac{2^{-j\gamma}\,\eta(B(z,2^j\ell(R)))}{\bigl(2^j\ell(R)
\bigr)^n}.
\end{align*}
Now we take into account that $\langle h\rangle_{\eta,B(z,2^{j}\ell(R))}\lesssim
M_{\eta}h(z)$, where $M_{\eta}$ stands for the centered maximal Hardy-Littlewood operator with respect to $\eta$, and that
$$
\sum_{j\geq0}\frac{2^{-j\gamma}\,\eta(B(z,2^j\ell(R)))}{\bigl(2^j\ell(R)\bigr)^n}\lesssim
\delta\,A^{n} + A^{-\gamma}\leq \ve_1,
$$
by arguments
analogous to the ones in \eqref{eqigu3*}. Then, by \eqref{eqgh57},
\begin{align*}
\int g\,h\,d\eta &\lesssim
\ve_1 \sum_{R\in \Sigma_1'(Q)} \inf_{z\in R} M_{\eta}h(z)\,
 \eta(R) \leq \ve_1\int M_{\eta}h\, \,d\eta \lesssim \ve_1\,\|h\|_{L^2(\eta)}\,\eta(Q)^{1/2},
 \end{align*}
which implies that $
\|g\|_{L^2(\eta)}\lesssim \ve_1\,\eta(Q)^{1/2}$.
Plugging this estimate into \eqref{eqff593} and recalling that $\|\wt g\|_{L^2(\sigma)} = \| g\|_{L^2(\eta)}$, we obtain
$$\bigl\|T\sigma\bigr\|_{L^2(\sigma)}^2   \lesssim  
\sum_{S\in \Sigma_1'(Q)} \bigl|\langle T\eta\rangle_{\eta,S}\bigr|^2\,\sigma(\tfrac14B(S)) +
c'(\kappa_0)\ve_1^2\,\sigma(Q),$$
which by \eqref{eqff594} proves the lemma.
\end{proof}
\vv

Notice that Lemma \ref{ENV.approximation} is an immediate consequence of Lemmas 
\ref{lemstep1}, \ref{lemstep2}, and \ref{lemstep3}.

\vv

%%%%%%%%%%%%%%%%%%%%%%%%%%%%%%%%%%%%%%%%%%%%%%%% SECTION 2 %%%%%%%%%%%%%%%%%%%%%%%%%%%%%%%%%%%%%%%%%%%%%%%%%%%%%%%%%%%%%%%%%%%%%%%%%%%%
\section{Proof of Proposition \ref{ENV.mainlemma} using a variational argument}

 With Lemma \ref{ENV.approximation} at our disposal, the proof of Proposition \ref{ENV.mainlemma} can readily be concluded once we estimate $\|T\sigma\|_{L^2(\sigma)}$ from below. 
\vv

Given a fixed cube $Q \in \Sigma$ and $0<\lambda<1$, for the
sake of contradiction, we assume that
\begin{equation}\label{eqassu8}
\|T\sigma\|_{L^2(\sigma)}^2 \leq \lambda \|\sigma\|.
\end{equation}
We will show that $\lambda$ cannot be arbitrarily small for $Q \in \Sigma_N$, $N>N_0$ big enough.
\vv
%%%%%%%%%%%%%%%%%%%%%%%%%%%%%%%%%%%%%%%%%%%%%%%%%%% SUBSECTION POINTWISE ESTIMATE %%%%%%%%%%%%%%%%%%%%%%%%%%%%%%%%%%%%%%%%%%%%%%

\subsection{A pointwise estimate}

Consider the family of functions
$$\AZ:= \Bigl\{g\in  L^\infty(\sigma): g\geq0 \mbox{ and $\int g \,d\sigma = \|\sigma\|$} \Bigr\}$$
and the functional defined on $\AZ$ by
$$
F(g) = \lambda \|g\|_{\infty} \|\sigma\| + \int \left|T(g\sigma) \right|^2 g d\sigma.
$$
Notice that by hypothesis we have 
$$
F(\chi_Q) = \lambda \,\|\sigma\| + \int \left|T\sigma \right|^2 d\sigma \leq 2\lambda \|\sigma\|.
$$
Therefore, the infimum of the functional $F$ is attained over  functions $g$ that satisfy $\|g\|_{L^\infty(\sigma)} \leq 2$. Indeed, recall that $\sigma$ is absolutely continuous
with respect to Lebesgue measure and that its density function is bounded (because $\Sigma_1'(Q)$ is a finite family).  Then, by taking a weak limit of functions $b_k\in L^\infty(\sigma)$ such that $\|b_k\|_{L^\infty(\sigma)}\leq 2$ 
and $F(b_k)\to \inf_\AZ F$, it follows easily that 
$$\|b_k\|_{L^\infty(\sigma)}\to \|b\|_{L^\infty(\sigma)}
\quad\mbox{ and }\quad \int \left|T(b_k\sigma) \right|^2 \,b_k\,d\sigma\to \int \left|T(b\sigma) \right|^2 \,b\,d\sigma,$$
and thus $F(b_k)\to F(b)$ and the minimum of $F$ in $\AZ$ is attained at $b$, with 
 $\|b\|_{L^\infty(\sigma)} \leq 2$.

For the rest of this section we denote 
$$
d\nu = b\, d\sigma.
$$
Obviously,  since $b$ is a minimizer of $F$,
\begin{equation}\label{eqobv1}
\lambda \|b\|_{\infty} \|\sigma\| + \int \left|T\nu \right|^2  d\nu\leq 2\lambda \,\|\sigma\|=2\lambda \,\|\nu\|.
\end{equation}

Now fix some point $x \in \mathrm{supp}(\nu)$ and some small radius $r$.
%that can be assumed to be very small (since we will make it tend to $0$)
Denote $B=B(x,r)$ and consider the following variation of the minimizer $b$:
$$
b_t = b\,(1-t \chi_B) + t\, b\, \frac{\nu(B)}{\|\nu\|}.
$$
Denote also by $\nu_t$ the measure given by $d\nu_t  = b_t \,d\sigma$. Compute
\begin{align*}
F(b_t) & =  \lambda \|b_t\|_\infty \|\nu\| + \int |T\nu_t|^2 d\nu_t \\
& \leq  \lambda \|b\|_\infty \left( 1 + t\,\frac{\nu(B)}{\|\nu\|} \right)\|\nu\| + \int |T\nu_t|^2 d\nu_t =: G(t). 
\end{align*}

Since $b$ is a minimizer for the functional $F$ and $G(0) = F(b)$ we have the chain
$$
G(0) = F(b)\leq F(b_t) \leq G(t),
$$
which implies that $G'(0_+) := \lim_{t \to 0+} G'(t) \geq 0$. This means that
$$
\lambda \|b\|_\infty \nu(B) + \int |T\nu|^2 \left(\frac{\nu(B)}{\|\nu\|}  - \chi_B \right) d\nu + 
2 \int T\nu \cdot T\left(\left[\frac{\nu(B)}{\|\nu\|} - \chi_B\right]\nu \right) d\nu \geq 0 .
$$
Reorganizing, we get
\begin{equation}\label{eq1p22}
\int_B |T\nu|^2 d\nu + 2\int T\nu \cdot T(\chi_B \nu) d\nu \leq \lambda \|b\|_\infty \nu(B) + \frac{\nu(B)}{\|\nu\|} \int |T\nu|^2\, d\nu + 2 \,\frac{\nu(B)}{\|\nu\|} \int |T\nu|^2 \,d \nu.
\end{equation}
Given a vector valued measure $\omega$, we define
$$T^* \omega(x)  =  \int \nabla_1 \mathcal{E} (y,x) \cdot d\omega(y).$$
Now we take into account  that 
$$
\int T\nu\cdot T(\chi_B \nu) d\nu  = \int_B T^*\left( [T\nu]\nu \right) d\nu,
$$
we divide both sides of \eqref{eq1p22} by $\nu(B)$, and we use \eqref{eqobv1} to get
\begin{align*}
\frac{1}{\nu(B)} \int_B |T\nu|^2 d\nu + \frac{2}{\nu(B)} \int_B T^*\left( [T\nu]\nu \right) d\nu & \leq \lambda \|b\|_\infty + \frac{3}{\|\nu\|} \int |T\nu|^2 d\nu \leq 6\lambda. \\
\end{align*}

Finally, we let $r=r(B)$ tend to $0$ and Lebesgue differentiation theorem yields the pointwise inequality
\begin{equation}\label{ENV.pointwiseineqnu}
|T\nu(x)|^2 + 2 T^*\left( [T\nu]\nu \right)(x) \leq 6 \lambda, \quad \mbox{for} \; \nu\mbox{-a.e.} \; x \in \mathrm{supp}(\nu).
\end{equation}

%%%%%%%%%%%%%%%%%%%%%%%%%%%%%%%%%%%%%%%%%%%%%%% APPLICATION OF MAX PRINCIPLE %%%%%%%%%%%%%%%%%%%%%%%%%%%%%%%%%%%%%%%%%%%
\vspace{2mm}
\subsection{Application of the maximum principle}

We now want to extend \eqref{ENV.pointwiseineqnu} to the whole of $\mathbb{R}^{n+1}$. 
Note first that $T\nu$ is continuous and belongs to $L^\infty(\nu)$ because 
$b\in L^\infty(\sigma)$, and 
 $\sigma$ is absolutely continuous
with respect to Lebesgue measure with a bounded density function, since $\Sigma_1'(Q)$ is a finite family. Hence $|T\nu(x)|^2$ is continuous, and also 
$T^*\left( [T\nu]\nu \right)$ by analogous arguments.

Now we claim that the definition of $T$ implies
\begin{equation}\label{ENV.simplemaxprincestimate}
\sup_{x \in \mathbb{R}^{n+1}} |T^*\omega(x)| \leq \sup_{x \in \mathrm{supp}(\omega)} |T^*\omega(x)|,
\end{equation}
for each vector valued measure $\omega$ which is compactly supported and absolutely continuous with respect to Lebesgue measure with a bounded density function. 
Indeed, if $d\omega = \vec{F} d\LL^{n+1}$, for $x\in \supp(\omega)$ we may write

$$
T^* \omega(x)  =  \int \nabla_1 \mathcal{E} (y,x) \cdot d\omega(y) =  \int \nabla_1 \mathcal{E} (y,x) \cdot \vec{F}(y) dy. 
$$

Note now that if $\vphi \in C^\infty_c(\R^{n+1} \setminus \supp\vec{F})$, by Fubini's theorem and the fact that $\EE_A(y,x)=\EE_{A^*}(x,y)$ (where $\EE_{A^*}(\cdot,\cdot)$ is the fundamental solution of
the operator defined by $L_{A^*} u(x)= -\mathrm{div}\left(A^*(\cdot) \nabla u (\cdot) \right)(x)$),
  we have that 
$$
\int A^* \nabla T^*\omega \cdot \nabla \vphi =  \int \nabla_y \!\!  \int A^*(x) \nabla_x \mathcal{E}_{A^*}(x,y) \cdot \nabla \vphi(x) \, dx \,\cdot  \vec{F}(y) dy = \int  \nabla \vphi \cdot \vec{F} = 0.
$$ 
Therefore, $T^*\omega$ is $L^*$-harmonic away from the support of $\omega$, and by maximum principle we get \eqref{ENV.simplemaxprincestimate}.

To derive the desired extension of \eqref{ENV.pointwiseineqnu} from \eqref{ENV.simplemaxprincestimate}, we use the elementary formula
$$
\frac12 |w|^2 = \sup_{\begin{subarray}{c} \beta > 0 \\ e \in \mathbb{R}^{n+1}, \|e\|=1 \end{subarray}} \beta \,\langle e, w \rangle - \frac12 \beta^2.
$$ 
We apply it for  $w=T\nu(x)$, with $x\in\R^{n+1}$, and we get
\begin{equation}
\label{ENV.Tnucuadrado}
\frac12 |T\nu(x)|^2 = \sup_{\begin{subarray}{c} \beta > 0 \\ e \in \mathbb{R}^{n+1}, \|e\|=1 \end{subarray}} \beta \,\langle e, T\nu(x) \rangle - \frac12 \beta^2.
\end{equation}
Now, if $e=(e_1, \cdots e_{n+1})$ and we define the vector valued measure $\nu e = (\nu e_1, \cdots, \nu e_{n+1})$, taking into account that $\nabla_1 \Theta(y,x;A(x)) =- \nabla_1 \Theta(x,y;A(x)) $  for all $x\neq y$, we obtain
\begin{align*}
\langle e, T\nu(x) \rangle & = \int \nabla_1 \mathcal{E}(x,y) \cdot e \; d\nu(y) \\
%& =  \sum_{j=1}^{n+1} \int \partial_{x_j} \mathcal{E}(x,y) e_j \; d\nu(y) \\
& =  \int \nabla_1 \mathcal{E}(x,y) \cdot d(\nu e)(y) \\
& =  -T^*(\nu e)(x) + \int \left[ \nabla_1 \mathcal{E}(x,y) + \nabla_1 \mathcal{E}(y,x) \right]\cdot d(\nu e)(y) \\
& =  -T^*(\nu e)(x) + \int \left[ \nabla_1 \mathcal{E}(x,y) - \nabla_1 \Theta(x,y;A(x)) \right]\cdot d(\nu e)(y) \\
& \quad+  \int \left[ \nabla_1 \mathcal{E}(y,x) - \nabla_1 \Theta(y,x;A(x)) \right]\cdot d(\nu e)(y).
\end{align*}
By Lemma \ref{ENV.constantCoef}, if $\dist(x,Q)\leq 1$,
 the last two terms above are small and we get
\begin{equation}\label{eqcas1*}
\langle e, T\nu(x) \rangle = -T^*(\nu e)(x) + e\cdot C(x)\ell(Q)^\alpha,
\end{equation}
with $|C(x)|\lesssim1$. In the case that $\dist(x,Q)\geq1$, we use the fact that 
$|\nabla_1 \mathcal{E}(x,y)|+ |\nabla_1 \mathcal{E}(y,x)|\lesssim1$ by Lemma \ref{lemcz} (c).
Then we derive
$$\left|\int \left[ \nabla_1 \mathcal{E}(x,y) + \nabla_1 \mathcal{E}(y,x) \right]\cdot d(\nu e)(y)\right|
\lesssim \|\nu\|\lesssim \mu(Q)\lesssim \ell(Q)^n\lesssim \ell(Q)^\alpha,$$
and so \eqref{eqcas1*} also holds.

We insert the above calculation in \eqref{ENV.Tnucuadrado} and we get 
\begin{align*}
|T\nu(x)|^2 &+ 4 T^*\left( [T\nu]\nu \right)(x) \\
& =  \sup_{\begin{subarray}{c} \beta > 0 \\ e \in \mathbb{R}^{n+1}, \|e\|=1 \end{subarray}} \left\{ -2\beta T^*(\nu e)(x) +e\cdot C(x) \beta \ell(Q)^\alpha -\beta^2 + 4 T^*\left( [T\nu]\nu \right)(x) \right\} \\
& =   \sup_{\begin{subarray}{c} \beta > 0 \\ e \in \mathbb{R}^{n+1}, \|e\|=1 \end{subarray}} \left\{ T^*\left(-2\beta \nu e + 4[T\nu]\nu \right)(x) + e\cdot C(x) \beta \ell(Q)^\alpha -\beta^2 \right\} \\
& \leq  \sup_{\begin{subarray}{c} \beta > 0 \\ e \in \mathbb{R}^{n+1}, \|e\|=1 \end{subarray}} \sup_{z \in \mathrm{supp}(\nu)} \left\{ T^*\left(-2\beta \nu e + 4[T\nu]\nu \right)(z) + e\cdot C(x) \beta \ell(Q)^\alpha -\beta^2 \right\} \\
& =   \sup_{z \in \mathrm{supp}(\nu)} \!\sup_{\begin{subarray}{c} \beta > 0 \\ e \in \mathbb{R}^{n+1}, \|e\|=1 \end{subarray}} \left\{ T^*\left(-2\beta \nu e + 4[T\nu]\nu \right)(z) + e\cdot C(x) \beta \ell(Q)^\alpha -\beta^2 \right\}, 
\end{align*}
by \eqref{ENV.simplemaxprincestimate}. 

Now we reverse the process admitting another error term which is bounded above by $\ell(Q)^\alpha$ to obtain
\begin{align*}
&|T\nu(x)|^2 + 4 T^*\left( [T\nu]\nu \right)(x)  \\&\leq   \sup_{z \in \mathrm{supp}(\nu)} \sup_{\begin{subarray}{c} \beta > 0 \\ e \in \mathbb{R}^{n+1}, \|e\|=1 \end{subarray}} \!\!\left\{ -2\beta T^*( \nu e)(z) + 4T^*\left( [T\nu]\nu \right)(z) + e\cdot C(x) \beta \ell(Q)^\alpha -\beta^2 \right\} \\
& = \sup_{z \in \mathrm{supp}(\nu)}\! \sup_{\begin{subarray}{c} \beta > 0 \\ e \in \mathbb{R}^{n+1}, \|e\|=1 \end{subarray}}  \! \!\!\left\{2\beta e\cdot T\nu(z) + e\cdot C'(z)\beta \ell(Q)^\alpha + 4T^*\left( [T\nu]\nu \right)(z) + e\cdot C(x) \beta \ell(Q)^\alpha -\beta^2  \right\} \\
& =  \sup_{z \in \mathrm{supp}(\nu)} \sup_{\begin{subarray}{c} \beta > 0 \\ e \in \mathbb{R}^{n+1}, \|e\|=1 \end{subarray}}\! \!\!\left\{2\beta  e\cdot(T\nu(z) - \frac12(C(x)+C'(z))\ell(Q)^\alpha + 4T^*\left( [T\nu]\nu \right)(z) -\beta^2  \right\} \\
& =  \sup_{z \in \mathrm{supp}(\nu)} \left\{ \left| T\nu(z) - \frac12(C(x)+C'(z))\ell(Q)^\alpha \right|^2 +4T^*\left( [T\nu]\nu \right)(z)\right\} \\
& \leq  \sup_{z \in \mathrm{supp}(\nu)} \left\{ 2|T\nu(z)|^2 + 4 T^*\left( [T\nu]\nu \right)(z)\right\} + C\ell(Q)^\alpha.  
\end{align*}
Finally, we apply \eqref{ENV.pointwiseineqnu} to get
\begin{equation}
\label{ENV.goodPointwise}
|T\nu(x)|^2 + 4\, T^*\left( [T\nu]\nu \right)(x) \lesssim \lambda + \ell(Q)^\alpha \quad \forall x \in \mathbb{R}^{n+1}.
\end{equation}

\vv
%%%%%%%%%%%%%%%%%%%%% SUBSECTION \psi %%%%%%%%%%%%%%%%%%%%%%%

\subsection{The vector field $\Psi_Q$}

For each cube $R \in \mathrm{HD}(Q)$ consider a vectorial function $g_R$ such that
$$
T^*(g_R \; d\LL^{n+1}) = \varphi_R,
$$
where $\varphi_R$ is a smooth function such that $\chi_{1.5 B_R}\leq \varphi_R\leq \chi_{2 B_R}$,
with $\|\nabla\varphi_R\|_\infty\lesssim \ell(R)^{-1}$.
 By the reproducing formula
$$
\varphi(x) = \int \nabla_1 \mathcal{E}(y,x) \left[A^*(y)\nabla\varphi(y) \right] \; dy = T^*\left[A^*(\cdot)\nabla\varphi(\cdot) \; d\LL^{n+1}\right](x),
$$
which is valid for smooth functions with compact support, we may set 
$$
g_R := A^* \nabla\varphi_R.
$$
 Of course, we have
\begin{equation}\label{eqgr}
\supp g_R\subset 2B_R, \quad \|g_R\|_{L^\infty(\LL^{n+1})} \lesssim \ell(R)^{-1},\quad
\|g_R\|_{L^1(\LL^{n+1})} \lesssim \mu(R) \approx \ell(R)^n.
\end{equation}

Now, we define the following subcollection of cubes:
$$
{\rm HD}_0(Q):=\big\{R\in{\rm HD}(Q): \nu(1.5B_R)\geq \frac14\,\mu(R) \big\}.
$$
We have
$$\sum_{R\in {\rm HD}(Q)\setminus {\rm HD}_0(Q)}\nu(1.5B_R)\leq \frac14\,\sum_{R\in{\rm HD}(Q)}\mu(R) =\frac14\,\mu(Q)\leq \frac12\,\|\nu\|,$$
where in the last inequality we used \eqref{eq:sigma=muQ}. Thus, 
$$\|\nu\| \leq \sum_{R\in {\rm HD}_0(Q)}\nu(1.5B_R) + \sum_{R\in {\rm HD}(Q)\setminus {\rm HD}_0(Q)}\nu(1.5B_R)
\leq \sum_{R\in {\rm HD}_0(Q)}\nu(1.5B_R) + \frac12\,\|\nu\|,$$
and so 
$$\|\nu\| \leq 2\sum_{R\in {\rm HD}_0(Q)}\nu(1.5B_R).$$

Next, note  that for $R\in {\rm HD}_0(Q)$,  
$$\nu(1.5B_R)\geq \frac14\,\mu(R)\gtrsim\tau \ell(R)^n$$
 and observe that $\nu$ has
$n$-polynomial growth (this follows easily  from the $n$-polynomial growth of $\mu$). Thus,
$$\nu(9B_R)\lesssim_\tau \nu(1.5B_R).$$
Hence,
by a Vitali type covering argument we may find
a finite subfamily ${\rm HD}_1(Q)\subset {\rm HD}_0(Q)$ such that the balls $3B_R$ with $R\in{\rm HD}_1(Q)$
are pairwise disjoint, and such that
$$\mu(Q)\approx_\tau\|\nu\|\approx_\tau \sum_{R\in {\rm HD}_1(Q)}\nu(1.5B_R).$$
Now, we define
$$
\Psi_Q := \sum_{R \in \mathrm{HD}_1(Q)} g_R.
$$

We need the following auxiliary result:

\begin{lemma}\label{lemauxfi}
We have
\begin{equation}\label{eqap890}
\int |T(|\Psi_Q|\, \LL^{n+1})|^2 d\nu  \lesssim \mu(Q).
\end{equation}
\end{lemma}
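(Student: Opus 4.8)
The plan is to expand $T(|\Psi_Q|\,\LL^{n+1})$ into the contributions of the single cubes and control the diagonal and off-diagonal parts separately. First I would observe that, since the balls $3B_R$ with $R\in\mathrm{HD}_1(Q)$ are pairwise disjoint and $\supp g_R\subset 2B_R$, the functions $g_R$ have pairwise disjoint supports; hence $|\Psi_Q|=\sum_{R\in\mathrm{HD}_1(Q)}|g_R|$ and $T(|\Psi_Q|\,\LL^{n+1})=\sum_R f_R$, where $f_R:=T(|g_R|\,\LL^{n+1})$. Two elementary facts will be used repeatedly. From \eqref{eqgr}, part (a) of Lemma~\ref{lemcz} (together with the local integrability of $|x-y|^{-n}$, which takes care of $x\in 2B_R$, and part (c) for $|x-y|\ge1$), one gets the uniform bound
$$|f_R(x)|\lesssim\frac{\ell(R)^n}{\bigl(\ell(R)+\dist(x,B_R)\bigr)^n}\qquad\text{for all }x\in\R^{n+1}.$$
Moreover, using that the cubes of $\mathrm{HD}(Q)$ are pairwise disjoint with $\Theta_\mu(R)>\tau$ and that $\mu$ has $n$-polynomial growth, one checks that $\omega:=|\Psi_Q|\,\LL^{n+1}$ has $n$-polynomial growth with constant $\lesssim_\tau1$ and total mass $\|\omega\|=\sum_R\|g_R\|_{L^1(\LL^{n+1})}\lesssim\sum_{R\in\mathrm{HD}(Q)}\mu(R)\le\mu(Q)$; the same bookkeeping shows that $\nu$ has $n$-polynomial growth with an absolute constant.

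For the diagonal sum I would split the domain of integration into the shells $\{\dist(x,B_R)\approx2^j\ell(R)\}$, $j\ge0$, and use the $n$-polynomial growth of $\nu$; since after squaring the kernel decays like $|x-y|^{-2n}$, the resulting series converges and yields $\int|f_R|^2\,d\nu\lesssim\ell(R)^n$. Summing over $R$ (and using $\ell(R)^n\lesssim_\tau\mu(R)$ together with the disjointness of the cubes in $\mathrm{HD}(Q)$), one obtains $\sum_R\int|f_R|^2\,d\nu\lesssim_\tau\mu(Q)$.

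The heart of the proof, and the step I expect to be the main obstacle, is the off-diagonal sum $\sum_{R\ne R'}\int f_R\cdot f_{R'}\,d\nu$. This cannot be handled by absolute values: replacing $f_R\cdot f_{R'}$ by $|f_R|\,|f_{R'}|$ produces a divergent, or at best logarithmically lossy, bound once summed over all pairs, so genuine cancellation must be used. I would exploit the adjoint identity $\int f_R\cdot f_{R'}\,d\nu=\int T^*(f_{R'}\nu)\cdot|g_R|\,d\LL^{n+1}$ together with the near-antisymmetry $|K(x,y)+K(y,x)|\lesssim|x-y|^{\alpha-n}$ furnished by Lemma~\ref{ENV.constantCoef} and the antisymmetry of $\nabla_1\Theta(\cdot,\cdot;A(\cdot))$: when $R$ and $R'$ are well separated, $T^*(f_{R'}\nu)$ is H\"older-regular on $2B_R$ at the scale $D(R,R'):=\ell(R)+\ell(R')+\dist(R,R')$, whereas the local interaction when $R$ and $R'$ are close (or share a large common ancestor) is controlled using the assumed $L^2(\mu)$-boundedness of $T_\mu$. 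Combined, these inputs should yield an almost-orthogonality estimate $\bigl|\int f_R\cdot f_{R'}\,d\nu\bigr|\lesssim_\tau\theta(R,R')\,\ell(R)^{n/2}\ell(R')^{n/2}$ with $\sup_R\sum_{R'}\theta(R,R')\lesssim_\tau1$, after which a Schur/Cotlar summation over all pairs gives $\sum_{R\ne R'}\bigl|\int f_R\cdot f_{R'}\,d\nu\bigr|\lesssim_\tau\mu(Q)$. The delicate point is organizing this sum over pairs of cubes of widely different side lengths without incurring logarithmic losses, for which the disjointness of the balls $3B_R$ — which bounds, scale by scale, how many cubes cluster near a given point — is essential. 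Together with the diagonal estimate this proves \eqref{eqap890}.
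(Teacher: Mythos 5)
Your setup is sound as far as it goes: the disjointness of the supports of the $g_R$, the pointwise bound $|f_R(x)|\lesssim \ell(R)^n\bigl(\ell(R)+\dist(x,B_R)\bigr)^{-n}$, and the diagonal estimate $\int|f_R|^2\,d\nu\lesssim\ell(R)^n\lesssim_\tau\mu(R)$ are all correct, and you are right that the off-diagonal sum cannot be closed with absolute values — that is indeed the crux. The gap is that none of the mechanisms you propose actually produces the needed cancellation. First, $T^*(f_{R'}\nu)$ is \emph{not} H\"older-regular at scale $D(R,R')$ on $2B_R$: the measure $f_{R'}\nu$ charges all of $\supp\nu$, including points arbitrarily close to (or inside) $2B_R$, so it has a genuinely singular local part there. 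Second, the near-antisymmetry $|K(x,y)+K(y,x)|\lesssim|x-y|^{\alpha-n}$ yields no gain for a fixed off-diagonal pair, since $(R,R')$ and $(R',R)$ contribute the \emph{same} term $\int f_R\cdot f_{R'}\,d\nu$ and nothing cancels between them. Third, the $L^2(\mu)$-boundedness of $T_\mu$ says nothing directly about $T$ applied to $|g_R|\,\LL^{n+1}$, which is Lebesgue measure on a ball, not a multiple of $\mu$. With only the modulus bound on $f_{R'}$ near $R$ one gets, for separated cubes of comparable size, $\bigl|\int f_R\cdot f_{R'}\,d\nu\bigr|\lesssim \ell(R)^n\ell(R')^n D(R,R')^{-n}$, and $\sum_{R'}\ell(R')^n D(R,R')^{-n}$ is exactly the logarithmically divergent sum you set out to avoid; the almost-orthogonality with a Schur-summable $\theta(R,R')$ is therefore asserted rather than obtained.

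The missing idea is a comparison-measure trick, which is how the paper proceeds. For each $R$ set $h_R=c_R\chi_R$ with $c_R\,\mu(R)=\int|g_R|\,d\LL^{n+1}$ (so $0<c_R\lesssim1$), put $h=\sum_R h_R$, and split $|\Psi_Q|\,\LL^{n+1}=\bigl(|\Psi_Q|\,\LL^{n+1}-h\,\mu\bigr)+h\,\mu$. The piece $T(h\mu)$ is controlled by the standing hypothesis that $T_\mu$ is bounded on $L^2(\mu)$, since $h\lesssim1$; this is the bridge to the operator hypothesis that your argument lacks. For the other piece, each difference $|g_P|\,\LL^{n+1}-h_P\,\mu$ has total mass zero and is supported in $2B_P$, so for $x\in R\neq P$ one gains the H\"older exponent of the kernel:
$$\Bigl|\int K(x,y)\,\bigl(|g_P|\,d\LL^{n+1}-h_P\,d\mu\bigr)\Bigr|\lesssim\frac{\ell(P)^\gamma}{D(P,R)^{n+\gamma}}\,\mu(P),$$
and the extra factor $(\ell(P)/D(P,R))^\gamma$ makes the sum over $P$ Schur-summable (by the duality argument of \eqref{eqgh57}) with no logarithmic loss. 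This yields $\int_Q|T(|\Psi_Q|\,\LL^{n+1})|^2\,d\mu\lesssim\mu(Q)$; one then transfers the bound from $d\mu$ to $d\nu$ (which lives on the balls $\tfrac14B(S)$, not on $\supp\mu$) by showing that $T(|\Psi_Q|\,\LL^{n+1})$ oscillates by at most $O(1)$ on each $S\cup\tfrac14B(S)$, $S\in\Sigma_1'(Q)$.
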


\begin{proof}
First we will show that
\begin{equation}\label{eqap891}
\int_Q |T(|\Psi_Q|\, \LL^{n+1})|^2 d\mu\lesssim \mu(Q).
\end{equation}
To this end, for each $R\in{\rm HD}_1(R)$ we consider 
the function $h_R=c_R\,\chi_R$, with $c_R\in\R$ such that 
$$c_R\,\mu(R)= \int h_R\,d\mu = \int |g_R|\,d\LL^{n+1},$$
so that $0<c_R\lesssim 1$, by \eqref{eqgr}. We denote $h=\sum_{R\in{\rm HD}_1(R)} h_R$.
Then, for each $x\in\R^{n+1}$ we have
$$|T(|\Psi_Q|\, \LL^{n+1})(x)|\leq |T(|\Psi_Q|\, \LL^{n+1} - h\,\mu)(x)| +  |T(h\,\mu)(x)|.$$
Since $\int |T(h\mu)|^2 d\mu\lesssim \mu(Q)$, to prove \eqref{eqap891} it suffices to show that 
\begin{equation}\label{eqap892}
\int_Q |T(|\Psi_Q|\, \LL^{n+1} - h\,\mu)|^2\,d\mu\lesssim\mu(Q).
\end{equation}

For $x\in R\in {\rm HD}_1(Q)$, we write
\begin{align*}
|T(|\Psi_Q|\, \LL^{n+1} - h\,\mu)(x)| & \leq |T(|g_R|\, \LL^{n+1})(x)| + |T(h_R\,\mu)(x)|\\
&\quad  + \sum_{P\in{\rm HD}_1(Q)\setminus\{R\}} 
\left|\int \!K(x,y)\,(|g_P|\,d\LL^{n+1} - h_P\,d\mu)\right|.
\end{align*}
We have
\begin{equation}\label{eqpunt62}
|T(|g_R|\, \LL^{n+1})(x)|\lesssim \frac1{\ell(R)}\int_{2B_R} \frac1{|x-y|^n}\,d\LL^{n+1}(y)\lesssim 1.
\end{equation}
On the other hand, for each $P\in{\rm HD}_1(Q)\setminus\{R\}$ we have
\begin{align*}
\left|\int K(x,y)\,(|g_P|\,d\LL^{n+1} - h_P\,d\mu)\right| & \leq 
\int |K(x,y)-K(x,x_P)|\,(|g_P|\,d\LL^{n+1} + h_P\,d\mu)\\
& \lesssim \int \frac{\ell(P)^\gamma}{|x-x_P|^{n+\gamma}}\,(|g_P|\,d\LL^{n+1} + h_P\,d\mu)\\
& \lesssim \frac{\ell(P)^\gamma}{D(P,R)^{n+\gamma}}\,\mu(P),
\end{align*}
taking into account that the balls $3B_R$, $R\in{\rm HD_1}(Q)$, are disjoint.
Therefore,
\begin{align*}
\int_Q |T(|\Psi_Q|\, \LL^{n+1} - h\,\mu)|^2\,d\mu & \lesssim 
\mu(Q) + \sum_{R\in{\rm HD}_1(Q)} \int |T(h_R\,\mu)|^2\,d\mu \\
&\quad  + \sum_{R\in{\rm HD}_1(Q)}\biggl(\sum_{P\in{\rm HD}_1(Q)\setminus \{R\}} 
\frac{\ell(P)^\gamma}{D(P,R)^{n+\gamma}}\,\mu(P)\biggr)^2\,\mu(R).
\end{align*}
By the $L^2(\mu)$ boundedness of $T_\mu$, it is clear that $\int |T(h_R\,\mu)|^2\,d\mu\lesssim\mu(R)$. Also, arguing by duality, as in \eqref{eqgh57}, it follows easily that the last term on the right hand side is bounded above by $c\,\mu(Q)$.
So \eqref{eqap892} follows, and thus \eqref{eqap891} too.

Next we turn our attention to the integral in \eqref{eqap890}. For each $S\in \Sigma_1'(Q)$, denote by $R_S$ the cube from ${\rm HD}(Q)$ that contains $S$.
For all $x,y\in S\cup \frac14B(S)$, we write
\begin{align}\label{eqfs89}
|T(|\Psi_Q|\, \LL^{n+1})(x)- T(|\Psi_Q|\, &\LL^{n+1})(y)| \\
\leq \sum_{\substack{R\in{\rm HD}_1(Q):\\ 2B_{R_S}\cap 2B_R\neq\varnothing}}&
\bigl(|T(|g_R|\, \LL^{n+1})(x)| +|T(|g_R|\, \LL^{n+1})(y)|\bigr)\nonumber\\
&+
\sum_{\substack{R\in{\rm HD}_1(Q):\\ 2B_{R_S}\cap 2B_R=\varnothing}}
|T(|g_R|\, \LL^{n+1})(x)-T(|g_R|\, \LL^{n+1})(y)|.\nonumber
\end{align}
Since the balls $\frac12B(R)$, $R\in{\rm HD}(Q)$, are disjoint and have diameter comparable to $\ell(R_S)$, the first sum on the right hand side has a bounded number of summands. Then, by \eqref{eqpunt62}, we obtain
$$\sum_{\substack{R\in{\rm HD}_1(Q):\\ 2B_{R_S}\cap 2B_R\neq\varnothing}}
\bigl(|T(|g_R|\, \LL^{n+1})(x)| +|T(|g_R|\, \LL^{n+1})(y)|\lesssim 1.$$
The second sum on the right hand side of \eqref{eqfs89}, is bounded above by 
\begin{align*}
\sum_{\substack{R\in{\rm HD}_1(Q):\\ 2B_{R_S}\cap 2B_R=\varnothing}}
\int_{1.5B_R}\frac{\ell(S)^\gamma}{|x-z|^{n+\gamma}}\,|g_R(z)|\,d\LL^{n+1}(z)\lesssim
\sum_{\substack{R\in{\rm HD}_1(Q)}}
\frac{\ell(S)^\gamma}{D(R,R_S)^{n+\gamma}}\,\mu(R)\lesssim 1,
\end{align*}
taking into account that $|g_R|\lesssim \ell(R)^{-1}$ and
using the $n$-polynomial growth of $\mu$ in the last inequality.
So we infer that
$$
|T(|\Psi_Q|\, \LL^{n+1})(x)- T(|\Psi_Q|\, \LL^{n+1})(y)|\lesssim1\quad \mbox{ for all $x,y\in S\cup \frac14B(S)$.}$$
 
Using the last estimate, we get
\begin{align*}
\int |T(|\Psi_Q|\, \LL^{n+1})|^2 d\nu & \leq 2\sum_{S\in\Sigma_1'(Q)}\int_S |T(|\Psi_Q|\, \LL^{n+1})|^2 d\sigma\\
&\lesssim \mu(Q) + \sum_{S\in\Sigma_1'(Q)}\int_S \inf_{y\in S}|T(|\Psi_Q|\, \LL^{n+1})(y)|^2 d\sigma\\
& = \mu(Q) + \sum_{S\in\Sigma_1'(Q)}\int_S \inf_{y\in S}|T(|\Psi_Q|\, \LL^{n+1})(y)|^2 d\eta\\
& \lesssim \mu(Q) + \sum_{S\in\Sigma_1'(Q)}\int_S |T(|\Psi_Q|\, \LL^{n+1})|^2 d\eta\\
& \lesssim \mu(Q) + \int |T(|\Psi_Q|\, \LL^{n+1})|^2 d\mu\lesssim \mu(Q),
\end{align*}
which completes the proof of the lemma.
\end{proof}

\vv
%%%%%%%%%%%%%%%%%%%%% SUBSECTION CONTRADICTION %%%%%%%%%%%%%%%%%%%%%%%

\subsection{Contradiction}

We compute, by the construction of $\mathrm{HD}_1(Q)$,
\begin{align*}
\mu(Q) &\approx_\tau \sum_{R\in\mathrm{HD}_1(Q)} \nu(1.5B_R) \leq  \sum_{R\in\mathrm{HD}_1(Q)} \int \varphi_R\, d\nu \\
&=   \sum_{R\in\mathrm{HD}_1(Q)} \int T^*[g_R \; d\LL^{n+1}] d\nu  =  \int T^*[\Psi_Q \; d\LL^{n+1}] d\nu
= \int T\nu \cdot \Psi_Q \; d\LL^{n+1} \\
& \leq  \left(\int |T\nu|^2 |\Psi_Q| \; d\LL^{n+1}\right)^{\frac12} \left(\int |\Psi_Q| \; d\LL^{n+1}\right)^{\frac12}\\
& \leq  \left(\int |T\nu|^2 |\Psi_Q| \; d\LL^{n+1}\right)^{\frac12}\mu(Q)^{\frac12}. \\
\end{align*}
We now apply \eqref{ENV.goodPointwise} and we get
$$
\int |T\nu|^2 |\Psi_Q| \; d\LL^{n+1}  \lesssim \left(\lambda+ \ell(Q)^\alpha\right)\int |\Psi_Q| \; d\LL^{n+1} + \left|\int  T^*\left( [T\nu]\nu\right) |\Psi_Q| \; d\LL^{n+1} \right| =: \mathrm{I} + \mathrm{II}. 
$$
For $\mathrm{I}$, we know that
$$
\mathrm{I} \lesssim \left(\lambda+ \ell(Q)^\alpha\right) \sum_{R\in \mathrm{HD}_1(Q)} \mu(R) \leq \left(\lambda+ \ell(Q)^\alpha\right)\mu(Q). 
$$
On the other hand, concerning $\mathrm{II}$, by Cauchy-Schwarz, \eqref{eqobv1}, and Lemma \ref{lemauxfi} we get
\begin{align*}
\mathrm{II} & =  \left|\int T\nu \cdot T(|\Psi_Q| \, \LL^{n+1}) d\nu \right| \\
& \leq  \left(\int |T\nu|^2 d\nu \right)^{\frac12}\left(\int |T(|\Psi_Q| \, \LL^{n+1})|^2 d\nu \right)^{\frac12} \\
& \leq  \lambda^{\frac12} \mu(Q)^{\frac12} \mu(Q)^{\frac12} = \lambda^{\frac12} \mu(Q). 
\end{align*}
Finally, gathering all estimates in this subsection together yields
$$
\mu(Q) \lesssim _\tau\left(\lambda + \ell(Q)^\alpha\right)^{\frac14} \mu(Q),
$$
which is a contradiction if both $\ell(Q)$  and $\lambda$ are small enough. 

\vv

\end{document}